\pdfoutput=1

\documentclass[11pt, a4paper]{amsart}
\usepackage[T1]{fontenc}
\usepackage[utf8]{inputenc} 
\usepackage{amssymb, amsmath, amsthm, bm, hyperref, graphicx, color, wrapfig, xparse}
\usepackage{mathtools} 
\usepackage[all, cmtip]{xy}
\usepackage{microtype}

\linespread{1.25}

\usepackage[hyperref, doi=false, backrefstyle=two, style=alphabetic, backend=bibtex, eprint]{biblatex}
\addbibresource{hochschild.bib}

\newtheorem{Th}{Theorem}[section]
\newtheorem*{Th*}{Theorem}
\newtheorem*{MainTh}{Main Theorem}

\newtheorem{Lem}[Th]{Lemma}
\newtheorem{Prop}[Th]{Proposition}
\newtheorem{Cor}[Th]{Corollary}
\newtheorem*{Cor*}{Corollary}

\theoremstyle{definition}

\newtheorem{Def}[Th]{Definition}

\newtheorem*{Question*}{Question}

\theoremstyle{remark}
\newtheorem{Rm}[Th]{Remark}
\newtheorem{Ex}[Th]{Example}

\newcommand{\Z}{\mathbb{Z}}

\newcommand{\Q}{\mathbb{Q}}
\newcommand{\C}{\mathbb{C}}
\newcommand{\R}{\mathbb{R}}
\newcommand{\F}{\mathbb{F}}


\newcommand{\Category}[1]{\textsf{\textbf{#1}}} 
\DeclareMathOperator{\Nat}{Nat} 

\DeclareMathOperator{\Homeo}{Homeo}
\DeclareMathOperator{\Map}{Map}

\DeclareMathOperator{\tot}{tot} 
\DeclareMathOperator{\Tot}{Tot} 
\DeclareMathOperator{\hocolim}{hocolim}
\newcommand{\op}{\mathrm{op}}
\DeclareMathOperator{\cyc}{cyc} 
\newcommand{\Sum}{\Sigma}
\DeclareMathOperator{\rk}{rk}

\newcommand{\ring}{\Bbbk} 
\mathchardef\mhyphen="2D 
\newcommand{\degree}[1]{|#1|} 
\newcommand{\Sing}{\mathrm{Sing}} 
\newcommand{\convergesto}{\Longrightarrow} 
\newcommand{\loops}{\mathcal{L}} 
\newcommand{\SeqOp}{\mathcal{S}} 
\newcommand{\id}{\mathrm{id}} 
\newcommand{\Moore}{\Omega^{Moore}} 

\newcommand{\Top}{\Category{Top}}

\newcommand{\sSet}{\Category{sSet}} 
\newcommand{\Ch}{\Category{Ch}} 

\newcommand{\dga}{\Category{dga}}
\newcommand{\sdga}{\Category{sdga}} 
\newcommand{\rSet}{\Category{rSet}} 
\newcommand{\invsdga}{\Category{i-rdga}} 
\newcommand{\invdga}{\Category{i-dga}} 
\newcommand{\ChHom}{\underline{\Ch}} 

\newcommand{\ford}{\Delta} 
\newcommand{\reflexive}{\Delta R} 
\newcommand{\cyclic}{\Delta C} 
\newcommand{\dihedral}{\Delta D} 
\newcommand{\stdSim}{\delta} 
\newcommand{\stdCyclic}{\delta_C} 
\newcommand{\stdDihedral}{\delta_D}

\newcommand{\htor}{\mathbb{T}or} 

\newcommand{\TMon}{\mathbb{T}} 
\newcommand{\OMon}{\mathbb{O}} 
\newcommand{\OTop}{\OMon\hspace{0.1em}\mhyphen\Top}

\newcommand{\inv}[1]{\overline{#1}} 

\begin{document}

\title{Free loop spaces and dihedral homology}
\author{Massimiliano Ungheretti}
\address{\newline Department of Mathematical Sciences\newline University of Copenhagen\newline
Universitetsparken 5 \newline 2100 Copenhagen, Denmark}
\email{m.ungheretti@math.ku.dk}
\urladdr{\url{ungheretti.com}}
\subjclass[2010]{55P50, 55P35 (primary), 16E40, 19D55 (secondary)}


\begin{abstract}We prove an $O(2)$-equivariant version of the Jones isomorphism relating the Borel $O(2)$-equivariant cohomology of the free loop space to the dihedral homology of the cochain algebra. We discuss polynomial forms and a variation of the de Rham isomorphism and use these to do a computation for the 2-sphere.
\end{abstract}

\maketitle

\section{Introduction}
For any space $X$, one may form the (unbased) mapping space $\loops X=\Map(S^1,X)$. These \emph{free loop spaces} have played a big role in geometry, topology and physics; in particular in string theory, string topology, loop groups and the study of geodesics through the use of the energy functional on free loop space. This is exemplified by the celebrated Gromoll--Meyer Theorem \cite{GromollMeyer}, which states that a simply connected closed Riemannian manifold admits infinitely many distinct closed geodesics if the sequence of Betti numbers $\{\rk H^k(\loops X; \ring)\}_{k\geq 0 }$ is unbounded for a field $\ring$. Although many manifolds are covered by this theorem, it remains an open question whether the assumption on the Betti numbers can be dropped.

The Gromoll--Meyer Theorem is proven by studying the infinite dimensional Morse theory of the energy functional $E(\gamma)= \int_{S^1}||\dot{\gamma}(t)||^2 dt$ on $\loops X$ as the critical points of $E$ correspond to closed geodesics. In \cite[p. 350]{BottMorse} Bott proposes that one has to take into account the invariance of the energy functional under rotations and reflections of the circle. And indeed, Rademacher and Hingston have shown that some other classes of Riemannian manifolds also admit infinitely many distinct geodesics by using Borel equivariant homology with respect to the rotations in $\TMon=SO(2)\subset O(2)$. A survey of such results is given in \cite{Oancea}. Although Lusternik and Schnirelmann proved that the 2-sphere with arbitrary Riemannian metric carries at least three distinct closed geodesics, it is not known if there are always infinitely many of them.

Taking into account the full $O(2)$-symmetry could bring a full answer even closer. This is pointed out in Remark 6.4 of \cite{Oancea} in the following way. An example of Katok \cite{Katok} shows that there is a non-symmetric Finsler metric on the $n$-sphere that admits only finitely many distinct closed geodesics. The notion of a non-symmetric Finsler metric is one that generalizes that of a Riemannian metric in a way that breaks the time reversal symmetry, signifying that the full $O(2)$-symmetry is really needed for admitting infinitely many geodesics.

A common tool for computing the (co)homology of $\loops X$ is the homology theory for algebras, Hochschild homology $HH_*$, and its variations like cyclic homology $HC_*$ and negative cyclic homology $HC^-_*$. In particular, we have the following two theorems available.
\begin{Th}[\cite{Goodwillie}, \cite{BF}]\label{Th:Goodwillie}
Let $X$ be a connected space and $\ring$ any ring.
\[H_*(\loops X;\ring) \cong HH_*(S_*(\Moore X;\ring))\]
\[H_*(\loops X_{h\TMon};\ring) \cong HC_*(S_*(\Moore X;\ring))\]
Here $(-)_{h\TMon}$ denotes the Borel construction with respect to the circle group $\TMon=SO(2)$ and $S_*(\Moore X;\ring)$ is the differential graded algebra of singular chains on the associative monoid of Moore loops on $X$ with Pontryagin product.
\end{Th}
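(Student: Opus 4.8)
The plan is to realize both sides as the homology, respectively the Borel $\TMon$-equivariant homology, of the geometric realization of one cyclic space built from the Moore loop monoid, following the method of \cite{Goodwillie} and \cite{BF}. Since $X$ is connected, the canonical map $B(\Moore X)\to X$ from the classifying space of the strictly associative (and, in a suitable well-pointed model, cofibrant) Moore loop monoid $G:=\Moore X$ is a weak equivalence, so I may replace $X$ by $BG$. Let $N^{\cyc}_\bullet G$ be the cyclic bar construction: the cyclic space with $n$-simplices $G^{n+1}$, inner faces given by the product, the last face $d_n$ multiplying the last factor onto the front, degeneracies by unit insertion, and cyclic operator $t_n(g_0,\dots,g_n)=(g_n,g_0,\dots,g_{n-1})$. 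The key geometric input is a natural $\TMon$-equivariant weak equivalence $|N^{\cyc}_\bullet G|\simeq\loops(BG)$, where $\TMon=SO(2)$ rotates loops and acts on the left-hand side through Connes' cyclic category $\cyclic$; this can be obtained by identifying $|N^{\cyc}_\bullet G|$ with the homotopy pullback of the diagonal $BG\to BG\times BG$ against itself, which is $\Map(S^1,BG)$ via the description of $S^1$ as a pushout.

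Next I apply $S_*(-;\ring)$. Via the Eilenberg--Zilber shuffle maps this functor is lax symmetric monoidal, so it sends $G$ to the DGA $A:=S_*(G;\ring)$ and, by iterating the shuffle maps, produces a natural map of \emph{cyclic} chain complexes $N^{\cyc}_\bullet A\to S_*(N^{\cyc}_\bullet G)$ (the Hochschild faces and the cyclic operator being built from the product and from coordinate permutations, both of which the shuffle map respects up to Koszul sign) that is a quasi-isomorphism in each simplicial degree. Taking total complexes and using the Eilenberg--Zilber theorem for good simplicial spaces---the shuffle map $\Tot S_*(Y_\bullet)\to S_*(|Y_\bullet|)$ is a quasi-isomorphism, and a levelwise quasi-isomorphism of such simplicial chain complexes induces one on totalizations---I get a quasi-isomorphism from the Hochschild complex $\Tot N^{\cyc}_\bullet A$ to $S_*(|N^{\cyc}_\bullet G|;\ring)\simeq S_*(\loops X;\ring)$, whose homology on the algebra side is $HH_*(A)=HH_*(S_*(\Moore X;\ring))$ by definition. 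This gives the first isomorphism. For the second, take $\TMon$-homotopy orbits: for any cyclic space $Y_\bullet$ the chains on $|Y_\bullet|_{h\TMon}$ are computed by Connes' $(b,B)$-bicomplex of the cyclic chain complex $S_*(Y_\bullet)$, which for $Y_\bullet=N^{\cyc}_\bullet A$ is exactly the bicomplex computing $HC_*(A)$; since the comparison of the previous step is a map of cyclic chain complexes it induces an isomorphism on the associated bicomplexes, yielding $H_*(\loops(BX)_{h\TMon};\ring)\cong HC_*(S_*(\Moore X;\ring))$.

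The main obstacle is making the topology-to-algebra comparison genuinely compatible with the cyclic structure, not merely with the simplicial (Hochschild) structure: one must verify the cyclic Eilenberg--Zilber statement---that the iterated shuffle maps assemble to a morphism of cyclic objects---which is where the Koszul-sign bookkeeping is delicate, or else work instead with the Alexander--Whitney map, which is only cyclic-compatible up to coherent homotopy and then requires an argument that $(-)_{h\TMon}$ depends only on that homotopy-coherent data. The other genuine input is the $\TMon$-equivariant equivalence $|N^{\cyc}_\bullet G|\simeq\loops BG$ of the first paragraph, at the level of spaces and equivariantly rather than after completion; once these two points are in hand the remainder is standard manipulation of cyclic bar constructions and of the skeletal-filtration spectral sequence.
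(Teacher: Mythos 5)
Your proposal is correct and follows essentially the same route that the paper takes: the paper cites this theorem to Goodwillie and Burghelea--Fiedorowicz without reproving it, but its Section 5 outline of Dunn's dihedral analogue is exactly your argument (in the larger dihedral setting): an Eilenberg--Zilber comparison $B^{\cyc}S_*G \to S_*(B^{\cyc}G)$ of (di)cyclic chain complexes for $G=\Moore X$, the equivariant identification $|B^{\cyc}G|\simeq \loops BG$, the identification of chains on the homotopy orbits of a good (di)cyclic space with the corresponding cyclic-homology complex, and the weak equivalence $B\Moore X \to X$ for connected $X$. The two delicate points you flag --- cyclic compatibility of the shuffle maps and the equivariant equivalence of realizations --- are precisely the inputs the paper attributes to Proposition 3.5 and Lemma 2.10 of Dunn (and, in the cyclic case, to the original sources), so no gap remains.
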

\begin{Th}[\cite{Jones}]\label{Th:Jones}
Let $\ring$ be a field and $X$ a simply connected space with finite type homology over $\ring$.
\[H^*(\loops X;\ring) \cong HH_*(S^*(X;\ring))\]
\[H^*(\loops X_{h\TMon};\ring) \cong HC_*^-(S^*(X;\ring))\]
Here $S^*(X;\ring)$ is the differential graded algebra of (normalized) singular cochains with cup product.
\end{Th}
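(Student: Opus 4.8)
The plan is to build a cocyclic model of the free loop space out of a simplicial model of the circle and then reduce the statement to a single quasi-isomorphism of cochain complexes. Fix a cyclic set $S^1_\bullet$ with $|S^1_\bullet|\cong S^1$ and with $S^1_n$ a set of $n+1$ elements permuted cyclically by the cyclic operator (such a model is classical: it is the one underlying the cyclic bar construction). Applying $\Map(-,X)$ contravariantly yields a cocyclic space $Y^\bullet$ with $Y^n=\Map(S^1_n,X)\cong X^{n+1}$, and since $\Map(-,X)$ sends the homotopy colimit $\hocolim_\ford S^1_\bullet\simeq|S^1_\bullet|=S^1$ to the corresponding homotopy limit, namely the totalization, we obtain a weak equivalence $\Tot Y^\bullet\simeq\Map(S^1,X)=\loops X$. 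Because $Y^\bullet$ is cocyclic rather than merely cosimplicial, the extra cyclic symmetry equips $\Tot Y^\bullet$ with an $S^1$-action which under this equivalence is the rotation action on $\loops X$. Applying singular cochains (again contravariant) turns $Y^\bullet$ into a cyclic chain complex, and under the identifications $S^*(Y^n;\ring)=S^*(X^{n+1};\ring)\simeq S^*(X;\ring)^{\otimes n+1}$ this is precisely the cyclic bar construction of the differential graded algebra $A:=S^*(X;\ring)$, whose total complex computes $HH_*(A)$ and whose homotopy $S^1$-fixed points, via the cyclic structure (equivalently, Connes' operator $B$), compute $HC_*^-(A)$.

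For the first isomorphism it remains to identify $H^*$ of the totalization with the total complex of its levelwise cochains, i.e. to show that singular cochains commute with the totalization above; this is the Eilenberg--Moore theorem. Concretely, the homotopy pushout $S^1=D^1\cup_{S^0}D^1$ presents $\loops X$ as the homotopy pullback of the two diagonal maps $X\to X\times X\leftarrow X$, and since $X$, hence $X\times X$, is simply connected, $S^*(\loops X;\ring)\simeq S^*(X;\ring)\otimes^{\mathbb{L}}_{S^*(X\times X;\ring)}S^*(X;\ring)$ as cochain complexes. Here the hypotheses that $\ring$ is a field and that $X$ has finite type homology enter, via the Künneth quasi-isomorphism $S^*(X\times X;\ring)\simeq S^*(X;\ring)^{\otimes 2}$, to rewrite the right-hand side as $A\otimes^{\mathbb{L}}_{A^{\otimes 2}}A$; computing this derived tensor product by the two-sided bar resolution reproduces the cyclic bar construction of the previous paragraph. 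Hence $H^*(\loops X;\ring)\cong HH_*(S^*(X;\ring))$.

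For the equivariant statement one uses the $S^1$-action built above together with the Connes--Jones dictionary between cyclic objects and $S^1$-equivariant homotopy theory. On the topological side the Borel cohomology $H^*(\loops X_{h\TMon};\ring)$ is, at the level of cochains, the homotopy fixed points of the rotation action on $S^*(\loops X;\ring)$, accessed through the fibration $\loops X\to\loops X_{h\TMon}\to B\TMon$ and the resulting filtration by powers of the Euler class. On the algebraic side the cyclic structure on the cyclic bar construction of $A$ assembles the Hochschild differential $b$ and Connes' operator $B$ into the mixed complex whose homotopy fixed points are, by definition, the negative cyclic complex $\bigl(\textstyle\prod_{i\geq0}C_{*+2i}(A),\,b+uB\bigr)$. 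Matching the $u$-adic filtration with the Euler-class filtration and invoking the convergence already established in the non-equivariant case, one concludes $H^*(\loops X_{h\TMon};\ring)\cong HC_*^-(S^*(X;\ring))$.

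The crux is the convergence of the Eilenberg--Moore spectral sequence underlying the quasi-isomorphism $S^*(\loops X)\simeq S^*(X)\otimes^{\mathbb{L}}_{S^*(X\times X)}S^*(X)$: strong convergence to $H^*(\loops X)$ is exactly what simple connectivity of $X$ buys, and it cannot be dispensed with. The finite-type-over-a-field hypothesis, by contrast, enters only in routine places -- the Künneth identification $S^*(X\times X)\simeq S^*(X)^{\otimes 2}$, the compatibility of cochains with the limits defining the totalization, and, in the equivariant case, the correct computation of the product $\prod_{i\geq0}$ in the negative cyclic complex -- but it too is genuinely needed for the on-the-nose statement rather than merely for a spectral sequence.
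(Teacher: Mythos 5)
The overall architecture you chose is the same one underlying both Jones' argument and the dihedral generalization proved in this paper (cocyclic model $\Map(S^1_\bullet,X)$, totalization, cyclic bar construction, negative cyclic complex as algebraic homotopy fixed points), but there is a genuine gap at the step you dispatch in one sentence: ``under the identifications $S^*(X^{n+1})\simeq S^*(X)^{\otimes n+1}$ this is precisely the cyclic bar construction.'' The K\"unneth/cross-product maps $S^*(X)^{\otimes n+1}\to S^*(X^{n+1})$ are compatible with the cosimplicial structure (diagonals go to cup products), but they do \emph{not} commute with the cyclic operators $t_n$, because the Eilenberg--Zilber/cross product is only graded-commutative up to (coherent) homotopy. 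So $S^*(\Map(S^1_\bullet,X))$ is not ``precisely'' $B^{\cyc}S^*(X)$ as a cyclic chain complex, and since the entire second isomorphism rests on transporting the cyclic (mixed-complex) structure across this identification, the gap sits exactly at the heart of the equivariant statement. Making this comparison honest is real work: in the present paper it is Proposition \ref{prop:bar_equivalence}, proved by a zigzag through an operadic resolution $QB^{\cyc}_\bullet S^*(X)$ rather than by a direct map, and Jones' original proof likewise has to control these homotopies explicitly.

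A second, related gap: your non-equivariant argument via the Eilenberg--Moore pullback $X\to X\times X\leftarrow X$ is fine as far as it goes, but it produces a quasi-isomorphism with no visible circle action, and the passage back to the equivariant setting is asserted by ``matching the $u$-adic filtration with the Euler-class filtration,'' which is precisely the statement needing proof. What is actually required is that the comparison map between $S_*(\tot\Map(S^1_\bullet,X))$ and $\Tot_\Pi S_*(\Map(S^1_\bullet,X))$ be a quasi-isomorphism \emph{of $\TMon$-modules}, i.e.\ compatible with Connes' $B$ operator on one side and the fundamental class action on the other; in the paper this is Proposition \ref{prop:compare_tot} (which already requires passing to oriented chains to make the map equivariant on the nose) together with the convergence input of Remark \ref{rm:convergence}, where simple connectivity, the field hypothesis and finite type all enter. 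Your closing paragraph attributes the finiteness hypotheses to ``routine places''; in fact the cochains-versus-totalization comparison you list there is the same Eilenberg--Moore convergence issue, and its equivariant refinement is not formal. So the proposal is a correct skeleton, but the two load-bearing steps --- strictifying the cyclic structure on the K\"unneth comparison, and establishing the $B$-operator/equivariant compatibility of the totalization comparison --- are exactly the ones left unproved.
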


Although the second theorem is somewhat harder to prove, it is often preferable for computational purposes. For instance, the algebra of cochains $S^*(X)$ is smaller than the Moore loops $S_*(\Moore X)$ and rational homotopy theory can be used to give even smaller models for $S^*(X;\Q)$.

As all free loop spaces come with the slightly bigger symmetry group $\OMon=O(2)\subset \Homeo(S^1)$, it is natural to ask what the analogous algebraic descriptions of (co)homology of $\loops X_{h\OMon}$ are. For the case of homology, Dunn gave the following analogue of Theorem \ref{Th:Goodwillie}.
\begin{Th}[\cite{DunnDihedral}]Let $X$ be a connected space and $\ring$ any ring.
\[H_*(\loops X_{h\OMon};\ring) \cong HD_*(S_*(\Moore X;\ring))\]\end{Th}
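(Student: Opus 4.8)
The plan is to run the proof of Theorem~\ref{Th:Goodwillie} in its $SO(2)$-equivariant form while carrying the extra reflection symmetry of the circle along at every stage, so that $SO(2)$ is promoted to $\OMon=O(2)$ and every cyclic object involved is promoted to a dihedral one. Recall the mechanism behind Theorem~\ref{Th:Goodwillie}: for a group-like topological monoid $G$ the geometric realization of the cyclic bar construction $N^{\cyc}_\bullet G$ is $SO(2)$-equivariantly weakly equivalent to $\loops BG$, with $SO(2)$ rotating the source circle; applying singular chains levelwise gives a cyclic chain complex which, by the cyclic Eilenberg--Zilber theorem, is quasi-isomorphic to the cyclic bar complex of the dga $S_*(G;\ring)$, whence $HH_* = H_*(\loops BG;\ring)$ and $HC_* = H_*((\loops BG)_{h\TMon};\ring)$. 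Taking $G=\Moore X$ and $B\Moore X\simeq X$ for connected $X$ recovers Theorem~\ref{Th:Goodwillie}. Thus it suffices to: (i) lift the cyclic bar construction of $\Moore X$ to a dihedral object; (ii) identify its geometric realization, $\OMon$-equivariantly, with $\loops X$; and (iii) invoke the principle that the dihedral homology of a dihedral chain complex computes the $\OMon$-Borel homology of its realization.

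For (i), loop reversal $\inv{\gamma}(t)=\gamma(\ell_\gamma-t)$ is a continuous \emph{strict} anti-automorphism of the Moore loop monoid --- because Moore loops concatenate without reparametrization, $\overline{\gamma\cdot\delta}=\inv\delta\cdot\inv\gamma$ and $\inv{\inv\gamma}=\gamma$ hold literally --- so $S_*(\Moore X;\ring)$ is a differential graded algebra equipped with an anti-involution. For any algebra-with-anti-involution $A$ the cyclic bar complex carries a dihedral structure in which the reflection operator acts on $A^{\otimes(n+1)}$ by $a_0\otimes a_1\otimes\cdots\otimes a_n\mapsto\inv{a_0}\otimes\inv{a_n}\otimes\inv{a_{n-1}}\otimes\cdots\otimes\inv{a_1}$; one verifies the dihedral relations $w_n^2=\id$, $w_nt_n=t_n^{-1}w_n$, and compatibility with faces and degeneracies. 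The same formulas, with tensor replaced by cartesian product and multiplication by concatenation, upgrade $N^{\cyc}_\bullet\Moore X$ to a dihedral space $N^{\mathrm{dih}}_\bullet\Moore X$; applying $S_*(-;\ring)$ levelwise and comparing with the dihedral bar complex of $S_*(\Moore X;\ring)$ through a (reflection-compatible) Eilenberg--Zilber equivalence shows that either model computes $HD_*(S_*(\Moore X;\ring))$.

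For (ii) and (iii), the geometric realization of a dihedral space carries a natural $\OMon$-action restricting on $SO(2)$ to the Connes action of the underlying cyclic space, so $|N^{\mathrm{dih}}_\bullet\Moore X|$ is an $\OMon$-space whose underlying $SO(2)$-space is $SO(2)$-equivalent to $\loops X$ by the cyclic case. It remains to see that a generating reflection of $\OMon$ acts, under this equivalence, by the loop-reversal involution of $\loops X=\Map(S^1,X)$ induced by $z\mapsto\bar z$ on $S^1$. This is a direct check on the standard parametrization of $|N^{\cyc}_\bullet G|$, whose points are tuples of monoid elements placed at prescribed barycentric parameters around the circle: relabelling the parameters by $t\mapsto 1-t$ while reversing the order of the elements and applying $\inv{(-)}$ is exactly the reflection operator $w_n$, and on the associated loop in $BG$ this is precomposition with a reflection of $S^1$. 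Finally, the identification of $HD_*$ of a dihedral chain complex with the $\OMon$-Borel homology of its realization is the dihedral analogue of the corresponding fact for cyclic objects and $SO(2)$, provable the same way --- e.g.\ by identifying both sides with the homotopy colimit over the dihedral category $\dihedral$ (using $B\dihedral\simeq BO(2)$), or by comparing the evident skeletal spectral sequences. Assembling (i)--(iii),
\[HD_*(S_*(\Moore X;\ring))\cong H_*\bigl((|N^{\mathrm{dih}}_\bullet\Moore X|)_{h\OMon};\ring\bigr)\cong H_*(\loops X_{h\OMon};\ring).\]

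The hard part is (ii)--(iii). The $\OMon$-action on the realization of a dihedral space is much subtler than the $SO(2)$-action in the cyclic case because $\OMon$ is disconnected: one has to track both how reflections act on the simplices $|\Delta^n|$ and how this interacts with the cyclic operators, and then make the equivariant comparison with $\loops X$ honest. Likewise, setting up ``$HD_*=H_*(-_{h\OMon}$ of the realization$)$'' over an arbitrary ring takes care, since the classical splitting of dihedral homology into reflexive and skew parts is unavailable in characteristic $2$, so one should really take the $\OMon$-Borel construction of the dihedral realization as the \emph{definition} of $HD_*$ (or else prove the comparison). A more minor point is verifying that the Eilenberg--Zilber equivalence of (i) respects the reflection operators and not merely the cyclic ones.
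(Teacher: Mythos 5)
Your overall route coincides with the paper's (which is itself a summary of Dunn's argument): promote the cyclic bar constructions of $\Moore X$ and of $S_*(\Moore X;\ring)$ to dihedral objects via loop reversal, compare them through a reflection-compatible Eilenberg--Zilber map, and use that the dihedral homology of the chains on a good dihedral space computes the $\OMon$-Borel homology of its realization (hocolim over $\dihedral$), together with $|B^{\cyc}G|\simeq_{\OMon}\loops BG$ for a group-like monoid $G$. Your cautionary remarks about characteristic $2$ and about needing a definition of $HD_*$ valid over an arbitrary ring are also in line with how the paper sets things up.

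The one genuine gap is in your step (ii), the passage from $\loops B\Moore X$ to $\loops X$. Under the equivalence $|B^{\cyc}G|\simeq\loops BG$ the generating reflection does \emph{not} act on a loop in $BG$ merely by precomposition with a reflection of $S^1$: reversing the order of the monoid coordinates and applying $\inv{(-)}$ also moves the base of the fibration $|B^{\cyc}G|\rightarrow BG$, so the induced involution on $\loops BG$ is loop reversal composed with $\loops(B\inv{(\cdot)})$ --- this is exactly how the involution on $\loops BG$ is stated in Dunn's theorem as quoted in the paper. Hence your concluding identification $H_*((|B^{\cyc}\Moore X|)_{h\OMon};\ring)\cong H_*(\loops X_{h\OMon};\ring)$ requires the additional fact that the natural weak equivalence $\xi\colon B\Moore X\rightarrow X$ intertwines the realized involution $B\inv{(\cdot)}$ with the \emph{trivial} involution on $X$; only then does the twisted loop reversal on $\loops B\Moore X$ correspond to the bare loop reversal on $\loops X$ appearing in the statement. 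The paper isolates this as a separate proposition and proves it by the explicit computation $\xi([\inv{\gamma_n},\ldots,\inv{\gamma_1};\inv{u}])=\xi([\gamma_1,\ldots,\gamma_n;u])$ for Moore loops $\gamma_i$ and barycentric coordinates $u$. Your ``direct check on the standard parametrization'' is morally this computation, but as you state it (for the associated loop in $BG$, with no account of the action on the base) it is not correct, and without it the $C_2$-part of the equivariant comparison with $\loops X$ --- and therefore the theorem for $\loops X$ rather than for $\loops B\Moore X$ with its twisted involution --- remains unproven.
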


Here $HD_*$ is a variation of cyclic homology called \emph{dihedral homology} \cite{LodayDihedrale} that allows one to take into account the $\OMon$-action rather than just the $\TMon$-action. Although Hochschild homology and cyclic homology take as input (differential graded) associative algebras, dihedral homology additionally requires the data of an involution on that algebra. In the case above, this data comes from reversing the loops in $\Moore X$.

The aim of this article is to extend Jones' theorem to take into account the $O(2)$-symmetry of $\loops X$.
\begin{MainTh}\label{th:dih_jones}Let $\ring$ be a field and $X$ a simply connected space with finite type homology over $\ring$. Then there is an isomorphism 
\[ H^*(\loops X_{h\OMon};\ring) \cong HD^-_*(S^*(X;\ring)).\]\end{MainTh}
Here $HD_*^-$ denotes a variation of dihedral homology called \emph{negative dihedral homology} and the cochain algebra $S^*(X;\ring)$ carries a homotopically trivial involution coming from changing the orientation of simplices.
\begin{proof}[Outline of the proof] \renewcommand{\qedsymbol}{} \leavevmode
\begin{enumerate}
\item\label{i:loop_model} To model the left hand side we start with the codihedral space $\Map(S^1_\bullet, X)$. A codihedral space is a cosimplicial space with extra structure that allows for an $\OMon$-action on its totalization. This codihedral space is used as a model for free loop space because $\tot \Map(S^1_\bullet, X) \cong_\OMon \loops X$ and hence \[S^*(\loops X_{h\OMon})\cong S^*((\tot \Map(S^1_\bullet, X))_{h\OMon} ).\]
\item\label{i:compare_orbits} The chains on the homotopy orbit space are then compared to an algebraic version of homotopy orbits
\[S^*((\tot \Map(S^1_\bullet, X))_{h\OMon} ) \simeq {S_*(\tot \Map(S^1_\bullet, X))_{h\OMon}}^\vee.\]
\item\label{i:tensor_hom} The tensor-hom adjunction relates the result of the last step to the algebraic homotopy fixed points of the dual
\[{S_*(\tot \Map(S^1_\bullet, X))_{h\OMon}}^\vee \cong S^*(\tot \Map(S^1_\bullet, X))^{h\OMon}.\]
\item\label{i:compare_tot} With the appropriate assumptions, comparing the two ways of totalizing the cochains on $\Map(S^1_\bullet, X)$ yields an equivalence
\[(S^*(\tot \Map(S^1_\bullet, X)))^{h\OMon} \simeq (\Tot_\oplus S^*(\Map(S^1_\bullet, X)))^{h\OMon}.\]
\item\label{i:cyc_bar_free_loops} After proving an equivalence of $S^*(\Map(S^1_\bullet, X))$ with the cyclic bar construction as a dihedral chain complex, it follows that
\[(\Tot_\oplus S^*(\Map(S^1_\bullet, X)))^{h\OMon} \simeq (\Tot_\oplus B^{\cyc}S^*(X))^{h\OMon}.\]
\end{enumerate}
As the homology of the last term is our definition of $HD^-_*(S^*(X))$, the result follows after taking homology.
\end{proof}

Since the cochain algebra $S^*(X;\ring)$ is generally too big to compute with, we prove that one may instead use the polynomial forms $A^*_{PL}(X)$ when $\ring$ is of characteristic 0 as $S^*(X;\ring)\simeq A^*_{PL}(X)$ as involutive algebras. Similarly, we prove that the de Rham isomorphism is compatible with the involutions. The following corollary is particularly useful.

\begin{Cor*}Let $X$ be a rationally formal space. Then there is an isomorphism
\[HD^-_*(S^*(X;\Q))\cong HD^-_*(H^*(X;\Q)).\]
\end{Cor*}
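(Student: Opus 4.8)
The plan is to reduce the corollary to two facts: the homotopy invariance of negative dihedral homology, and an involution-respecting refinement of Sullivan's formality zig-zag. First I would pass from singular cochains to polynomial forms. Since $S^*(X;\Q)\simeq A^*_{PL}(X)$ as involutive algebras, it is enough to produce an isomorphism $HD^-_*(A^*_{PL}(X))\cong HD^-_*(H^*(X;\Q))$, and for that I need that $HD^-_*$ — the homology of $(\Tot_\oplus B^{\cyc}(-))^{h\OMon}$ — carries a quasi-isomorphism of involutive commutative dg algebras to an isomorphism. Over the field $\Q$ this is a routine check, stage by stage: $B^{\cyc}$ preserves quasi-isomorphisms; for a simply connected finite type cochain algebra the reduced cochains are concentrated in degrees $\geq 2$, so the cyclic bar bicomplex has only finitely many entries on each total degree and $\Tot_\oplus$ is both homotopy invariant and equal to $\Tot_\prod$ there; and $\OMon$-homotopy fixed points, computed through the dihedral bar resolution, preserve quasi-isomorphisms of bounded-below complexes. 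At each stage the involutive ($\Z/2$-equivariant) structure is simply carried along, so all of these are equivalences of involutive complexes.

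Second I would make the formality zig-zag equivariant. Rational formality of $X$ provides commutative dg algebra quasi-isomorphisms $A^*_{PL}(X)\xleftarrow{\sim}M\xrightarrow{\sim}(H^*(X;\Q),0)$ through the minimal Sullivan model $M$. Because $M$ is cofibrant, the involution $\iota$ on $A^*_{PL}(X)$ lifts to a cdga self-map $\tilde\iota$ of $M$ covering $\iota$ up to homotopy; since $\iota$ is homotopically trivial, $\tilde\iota$ induces the identity on $H^*(M)$, and a strictification argument along the generating filtration of $M$ — available because $2$ is invertible in $\Q$ — lets one take $\tilde\iota$ to be a genuine involution. Straightening the comparison maps to strictly equivariant ones via $\Z/2$-equivariant path objects, whose interval factor $\Lambda(t,dt)$ carries $t\mapsto 1-t$, and using that the formality map $M\to H^*(X;\Q)$ and its composite with $\tilde\iota$ induce the same algebra map on cohomology, hence agree up to equivariant homotopy into the formal target $(H^*(X;\Q),0)$, I obtain a zig-zag of involutive cdga quasi-isomorphisms $(A^*_{PL}(X),\iota)\xleftarrow{\sim}(M,\tilde\iota)\xrightarrow{\sim}(H^*(X;\Q),\id)$. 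Conceptually this is the statement that, transported along the formality equivalence of homotopy automorphism spaces $\mathrm{haut}(A^*_{PL}(X))\simeq\mathrm{haut}(H^*(X;\Q))$, the homotopically trivial involution lands in the identity component — which is exactly the trivial involution the corollary puts on $H^*(X;\Q)$.

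Feeding this zig-zag into the homotopy invariance of the first step then gives $HD^-_*(A^*_{PL}(X))\cong HD^-_*(H^*(X;\Q))$, and composing with $S^*(X;\Q)\simeq A^*_{PL}(X)$ completes the argument.

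The hard part will be the second step: an off-the-shelf formality zig-zag need not respect the involutions, so one must rigidify the a priori only homotopy-coherent $\Z/2$-action on the minimal model into a strict one and then arrange the formality map itself to be equivariant. This is precisely where homotopy triviality of the involution is indispensable — for the cochains of an honest $\Z/2$-space one would be confronting a genuine equivariant-formality problem with potential obstructions, whereas here the reflection carries no homotopical content and everything can be straightened, with the invertibility of $2$ in $\Q$ doing the arithmetic.
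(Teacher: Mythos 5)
Your route is genuinely different from the paper's, and as written it has a gap at the step you yourself identify as the hard one. The problematic inference is: ``the formality map $M\to H^*(X;\Q)$ and its composite with $\tilde\iota$ induce the same algebra map on cohomology, hence agree up to equivariant homotopy.'' For cdga maps, inducing the same map on cohomology does not imply being homotopic (this is exactly the phenomenon that makes formality a nontrivial condition), so this step needs a real argument. What you would actually need is that the involution is trivial in the homotopy category of cdga's, i.e.\ $\iota\simeq\mathrm{id}$ as an algebra map (equivalently $\tilde\iota\simeq\mathrm{id}_M$), not merely that it is chain homotopic to the identity / acts as the identity on $H^*$ --- and that stronger statement is not established in your proposal. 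Likewise, the rigidification of $\tilde\iota$ to a strict involution on $M$ and the equivariant straightening of both legs of the zig-zag are asserted ("along the generating filtration", "via equivariant path objects") rather than proved; they can be carried out over $\Q$ since $\Q C_2$ is semisimple, but this is a nontrivial piece of equivariant rational homotopy theory. Your first and last steps (passing to $A_{PL}$ as involutive algebras, and quasi-isomorphism invariance of $HD^-_*$) match the paper.

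The paper avoids the entire strictification problem with an averaging trick, and the same trick is the cleanest repair of your argument. Because $A_{PL}$ is graded commutative and $\tfrac12\in\Q$, the strictly fixed subalgebra $\Nat_{\reflexive}(\Sing_\bullet X, A_{PL})\subset A_{PL}(\Sing_\bullet X)$ is a sub-dga on which the involution is the identity; the projection $\Psi\mapsto\tfrac12(\Psi+\overline{\Psi})$ retracts onto it, and a chain homotopy from $\overline{(-)}$ to the identity (which exists since the involution is the identity on cohomology and $\Q$ is a field) shows the inclusion is a quasi-isomorphism. This gives a zig-zag of involutive dga quasi-isomorphisms $(A_{PL}(X),\overline{(-)})\hookleftarrow(\tilde A,\mathrm{id})\hookrightarrow(A_{PL}(X),\mathrm{id})$, after which ordinary (non-equivariant) rational formality supplies a zig-zag of cdga quasi-isomorphisms to $(H^*(X;\Q),0)$ that is automatically involutive, since every map between cdga's carrying the identity involution respects the involutions. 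One then concludes by the quasi-isomorphism invariance of $HD^-_*$, as in your first step. In particular, no equivariant structure on the minimal model, and no equivariant obstruction theory, is ever needed; if you want to keep your Sullivan-model formulation, replace ``make the formality map equivariant'' by ``pass to the $C_2$-fixed subalgebra and average,'' which also furnishes the missing fact that the involution is trivial as an algebra map up to homotopy.
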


This corollary is used in the last section to compute $H^*((\loops S^2)_{h\TMon};\Q)$, which turns out to be one-dimensional in every dimension $* \equiv 0, 3$ modulo $4$, and zero otherwise. In characteristic two, the answer is more interesting. In that case the dimensions of $HD^-_*(H^*(S^2;\F_2))$ have been computed in low degrees to be the unbounded sequence $\lfloor (*+2)^2/4 \rfloor$. Unfortunately, the author was not able to show that the sphere is involutively formal over $\F_2$ meaning that the calculation does not necessarily apply to $H^*((\loops S^2)_{h\TMon};\F_2)$. With the Gromoll--Meyer Theorem in mind, it does however give another hopeful indicator that the $C_2$-symmetry could help proving that $S^2$ admits infinitely many distinct closed geodesics for any Riemannian metric.

\subsection*{Organization of the paper}
Sections \ref{sec:involutive_algebras}--\ref{sec:dihedral_homology} are dedicated to the definitions of involutions, dihedral objects and (negative) dihedral homology in a way suited to our application and Section \ref{sec:goodwillie} is an outline of the proof of Dunn's result. In Section \ref{sec:cyc_bar_free_loops} we prove Step \ref{i:cyc_bar_free_loops} of the outline, followed by Step \ref{i:compare_orbits} in Section \ref{sec:compare_orbits} and Step \ref{i:compare_tot} in Section \ref{sec:compare_tot}. The full proof of the Main Theorem is then given in Section \ref{sec:proof_main_theorem}. Section \ref{sec:deRham} is dedicated to polynomial forms and a version of the de Rham isomorphism that may aid in computations, which is then used in Section \ref{sec:example_calc} to calculate $H^*((\loops S^2)_{h\OMon};\Q)$.

\subsection*{Conventions}
All algebras are unital over a base ring $\ring$. We use the closed monoidal structure of $\Ch$, the category of unbounded homologically graded chain complexes over $\ring$. For example, the tensor product of two chain complexes has differential $d(x\otimes y)=dx\otimes y + (-1)^{\degree{x}} x \otimes dy$ and $f\in \ChHom(X,Y)_n$ is a map of degree $n$ with differential $(\delta f)(x)=d(f(x))-(-1)^n f(dx)$. The differential on the (sum) totalization of a simplicial chain complex is defined as $d_{\textit{int}}+(-1)^{\textit{int}}\Sigma_i (-1)^i d_i$. Similarly, the product totalization $\Tot_\Pi X^\bullet$ of a cosimplicial chain complex $X^\bullet$ has a differential that is $dx - (-1)^{p-n}\Sum (-1)^i \delta^i x$ on $x\in {X^n}_p$. 

If $Y^\bullet$ is a cosimplicial space, we define $\tot Y^\bullet = \Nat_\ford(\ford^\bullet,Y^\bullet)\in \Top$ to be the totalization.

Let $\TMon$ be the circle group, considered as a subset of the complex numbers. We denote the orthogonal group $O(2)=\TMon\rtimes C_2$ by $\OMon$. In this notation the multiplication on $\OMon$ is $(\tilde{z},\tilde{\alpha})\cdot(z,\alpha)=(\tilde{z}z^{\tilde{\alpha}},\tilde{\alpha}\alpha)$ where we consider $\alpha,\tilde{\alpha}=\pm1\in C_2$.

\subsection*{Acknowledgements}
The author would like to thank Amalie H\o genhaven and Kristian Moi for an invitation to the dihedral world and Nathalie Wahl for general guidance. The author was supported by the Danish National Sciences Research Council (DNSRC) and the European Research Council (ERC), as well as by the Danish National Research Foundation (DNRF) through the Centre for Symmetry and Deformation.

\section{Involutive algebras}\label{sec:involutive_algebras}
\begin{Def}Let $A$ be a differential graded algebra; that is, a monoid in $\Ch$. A chain map $\inv{(-)}\colon A\rightarrow A$ of degree zero is called an \emph{involution} if $\inv{\inv{a}} = a$, $\inv{1}=1$ and $\inv{ab}=(-1)^{\degree{a}\degree{b}}\inv{b}\inv{a}$ for all homogeneous $a,b \in A$. Such a map is called an anti-involution by some due to the flipping of the order. The data of a differential graded algebra together with an involution is called an \emph{involutive algebra}.
\end{Def}
\begin{Ex}\label{ex:cdga}If $A$ is graded commutative, then the identity map is an involution for $A$. In fact, every algebra endomorphism that squares to the identity is an involution.\end{Ex}
\begin{Ex}Complex conjugation is an involution for $\C$ as an algebra over the reals.\end{Ex}
\begin{Ex}\label{ex:singular_cochains_involution}We repeatedly use the differential graded algebra of (normalized) singular cochains $S^*(X)$. Because it is defined as the linear dual of singular chains $S_*(X)=\ring \otimes \Sing_*(X)$, it carries the differential
\[(\delta \gamma)(\sigma)=(-1)^{\degree{\gamma + 1}} \Sum_{i=0}^{n+1} \gamma(d_i \sigma).\]
The cup product is defined as
\[(\gamma_1\cup \gamma_2)(\sigma)= (-1)^{pq}\gamma_1(d_{p+1}\ldots d_{p+q}\sigma)\gamma_2((d_0)^p \sigma),\] 
where $\gamma_1$ and $\gamma_2$ are cochains of degree $p$ and $q$ respectively. This dga carries a natural involution given by $\inv{\gamma}(\sigma)=(-1)^{\degree{\gamma}(\degree{\gamma}+1)/2}\gamma(\inv{\sigma})$ where $\inv{\sigma}$ is the flipped simplex $\inv{\sigma}(t_0,\ldots,t_n)= \sigma(t_n,\ldots,t_0)$. See also Proposition \ref{prop:sing_inv}.\end{Ex}
\begin{Ex}For a group $G$, the map $g\mapsto g^{-1}$ is an involution on the group algebra $\ring G$.\end{Ex}
\begin{Ex}The singular chains of a topological monoid with involution form an involutive dga. An example of this is $S_*(\Moore X; \ring)$.\end{Ex}

\section{Cyclic and dihedral objects}\label{sec:dihedral_objects}
We recall some definitions of cyclic and dihedral objects and refer to \cite{Jones,LodayCyclic,FiedorowiczLodayCrossed,UngherettiCyclicBar} for more details.
The morphisms of the category of finite ordered sets $\ford$ are generated by $\delta^i, \sigma^i$, which satisfy the dual simplicial relations. By appropriately adding cyclic permutations $\langle \tau_n \rangle = C_{n+1}$ as the automorphisms of $[n]$, one obtains Connes' \emph{cyclic category} $\cyclic$. One obtains the \emph{dihedral category} $\dihedral$ if one also adds automorphisms $\rho_n$ at each $[n]$ such that the automorphisms become $D_{n+1}^{\op}$. The morphisms of this category may also be described as maps of unoriented necklaces. The subcategory of $\dihedral$ generated only by the maps $\delta_n^i, \sigma_n^i $ and $\rho_n$ for each $n,i$ is called the \emph{reflexive category}, denoted $\reflexive$. In analogy to the definitions of simplicial and cosimplicial objects, we call a contravariant functor from $\dihedral^{\op}$ to a category $\Category{C}$ a \emph{dihedral object} in $\Category{C}$ and a covariant such functor is called a \emph{codihedral object}. 

The morphisms in $\dihedral$ will be denoted by Greek letters $\delta^i_n, \sigma^i_n, \tau_n, \rho_n$ whereas we use the Roman alphabet for morphisms in the opposite category.
\begin{Ex}
A dihedral set is a simplicial set $X_\bullet$ with extra structure maps $t_n, r_n\colon X_n\rightarrow X_n$ for all $n$ such that the following identities are satisfied:
\begin{equation*}
\begin{aligned}[c]
d_n&=d_0t_n\\
d_it_n&=t_{n-1}d_{i-1}\\
s_it_n&=t_{n+1}s_{i-1}\\
s_0t_n&=t_{n+1}^2 s_n
\end{aligned}
\qquad
\begin{aligned}[c]
t_n^{n+1}&=r_n^2=\id_n\\
rt&=t^{-1}r\\
d_i r_n&= r_{n-1}d_{n-i}\\
s_i r_n&=r_{n+1}s_{n-i}
\end{aligned}
\end{equation*}
\end{Ex}
\begin{Ex} The singular set $\Sing_\bullet X$ of a topological space $X$ is a reflexive set using $r_n(\sigma)(t_0,\ldots,t_n)=\inv{\sigma}(t_0,\ldots,t_n)= \sigma(t_n,\ldots,t_0)$ on an $n\mhyphen$simplex $\sigma$. It is in fact also dihedral, but we do not use this fact. \end{Ex}
\begin{Ex}\label{ex:bar} If $A$ is an algebra with involution, then its bar construction is a reflexive chain complex using $r_n(a_1\otimes \ldots \otimes a_n)=\pm (\inv{a_n}\otimes \ldots \otimes \inv{a_1})$. This construction works for arbitrary monoids with involutions in symmetric monoidal categories.\end{Ex}
\begin{Ex}\label{ex:circle}
The simplicial model for the circle $[n]\mapsto S^1_n = \Z/(n+1)\Z$ is not only a cyclic set, it is also dihedral by using $r_n(i)=n-i+1$, which corresponds to reversing the orientation of the circle. From this dihedral set, one obtains for each space $X$ a codihedral space $[n] \mapsto \Map(S^1_n, X)= X^{n+1}$ that totalizes to the free loop space $\loops X$. The coboundaries are given by the diagonal maps, the codegeneracies by forgetting factors, cyclic maps by cyclically permuting the factors and the reflection by flipping the coordinates. For example, \[\delta^{n+1}(x_0,\ldots,x_n)=(x_0,x_1,\ldots,x_n,x_0).\] By functoriality of $S^*(\mhyphen)$, $S^*(\Map(S^1_\bullet,X))$ is a dihedral chain complex.
\end{Ex}
\begin{Ex}\label{ex:cyclic_bar}For any differential graded algebra $A$, we have the \emph{cyclic bar construction} $(B^{\cyc}A)[n]=A^{\otimes n+1}$, which is used to compute the Hochschild homology of $A$. The structure maps are given by multiplication, insertions of the unit and cyclic permutations of the tensor factors. If the algebra came with an involution, then $B^{\cyc}A$ is also a dihedral chain complex with $r_n(a_0\otimes \ldots \otimes a_n)=(-1)^{\degree{a_n}(\degree{a_1}+\ldots+\degree{a_{n-1}})}\inv{a_0}\otimes \inv{a_n} \otimes \ldots \otimes \inv{a_1}$.\end{Ex}
\begin{Ex}{\cite{FiedorowiczLodayCrossed,Jones}} Composing the Yoneda embedding $\ford\rightarrow \sSet$ with the realization functor $\sSet\rightarrow \Top$ we obtain the \emph{standard cosimplicial space} $\stdSim^\bullet$, which is the geometric standard simplex $\ford^n$ in simplicial degree $n$. Using this object, one can rewrite the geometric realization of a simplicial space $X_\bullet$ as the coend construction $|X_\bullet|= X_\bullet\otimes_{\ford}\stdSim^\bullet$ and the totalization of a cosimplicial space as $\tot Y^\bullet = \Nat_\ford(\stdSim^\bullet,Y^\bullet)$. The same can be done for the dihedral category, obtaining the \emph{standard codihedral space} $\stdDihedral$. Concretely, $\stdDihedral^n\cong \OMon \times \ford^n$ with the following structure maps.
\[
\begin{aligned}[c]
\delta^i&=\id \times \delta^i\\
\sigma^i&=\id \times \sigma^i\\
\rho_n(z,\alpha,t_0,\ldots,t_n)&=(z,-\alpha,t_n,\ldots,t_0)\\
\tau_n(z_\alpha,t_0,\ldots,t_n)&=(z\exp(-\alpha 2\pi i t_0),\alpha,t_1,\ldots,t_n,t_0)
\end{aligned}
\]
It is an elementary check that all of the structure maps are $\OMon$-equivariant if one uses left multiplication. This means that in fact $\stdDihedral$ is a functor $\dihedral\rightarrow \OTop$. The same construction can be done for the cyclic and the reflexive category and the resulting functors are all compatible.
\end{Ex}

\begin{Prop}The realization of a dihedral space has a natural $\OMon$-action. The same is true for the totalization of a codihedral space.\end{Prop}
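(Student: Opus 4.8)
The plan is to construct the $\OMon$-action directly from the standard codihedral space $\stdDihedral^\bullet$ considered above, using that it is a functor $\dihedral\to\OTop$ all of whose structure maps are $\OMon$-equivariant. Throughout let $U$ be the forgetful functor from (co)dihedral spaces to (co)simplicial spaces, and write $\dihedral[m]=\dihedral(-,[m])$ for the represented dihedral set.

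For a dihedral space $X_\bullet$ I would form the coend
\[ X_\bullet\otimes_\dihedral\stdDihedral^\bullet\;=\;\Bigl(\coprod_{n}X_n\times\stdDihedral^n\Bigr)\big/\!\sim. \]
Each $X_n$ carries the trivial $\OMon$-action and each $\stdDihedral^n$ its given action, so the diagonal action makes every summand an $\OMon$-space; since every identification in the coend is induced by a structure map of $\stdDihedral^\bullet$, hence equivariant, the action passes to the quotient, and $X_\bullet\otimes_\dihedral\stdDihedral^\bullet$ becomes an $\OMon$-space naturally in $X_\bullet$. It then remains to produce a natural homeomorphism between it and the realization $|UX_\bullet|$. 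The inclusion $\ford\hookrightarrow\dihedral$ together with the map of cosimplicial spaces $\stdSim^\bullet\to U\stdDihedral^\bullet$, $t\mapsto(1,+1,t)$, induces a natural map $|UX_\bullet|=X_\bullet\otimes_\ford\stdSim^\bullet\to X_\bullet\otimes_\dihedral\stdDihedral^\bullet$. Both sides are colimit-preserving in $X_\bullet$, so by the co-Yoneda presentation $X_\bullet\cong\int^{[m]\in\dihedral}X_m\cdot\dihedral[m]$ (copower by the represented dihedral sets) it suffices to verify that this map is a homeomorphism when $X_\bullet=\dihedral[m]$. There the target is $\stdDihedral^m$ by co-Yoneda, while the source is $|U\dihedral[m]|$; the factorization $\dihedral([n],[m])\cong\ford([n],[m])\times D_{n+1}^{\op}$ gives $U\dihedral[m]\cong\ford[m]\times E_\bullet$ with $E_\bullet=\bigl([n]\mapsto D_{n+1}^{\op}\bigr)$ a simplicial set whose realization is $\OMon$, and since $\ford[m]$ is finite one gets $|U\dihedral[m]|\cong\ford^m\times\OMon\cong\stdDihedral^m$; a direct inspection identifies the comparison map with this homeomorphism.

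For the totalization of a codihedral space $Y^\bullet$ the argument is the formal dual. Now $\Map(\stdDihedral^n,Y^n)$ carries the $\OMon$-action obtained by reparametrising the source, the structure maps of $\stdDihedral^\bullet$ are equivariant, so the end $\Nat_\dihedral(\stdDihedral^\bullet,Y^\bullet)$ inherits an $\OMon$-action, naturally in $Y^\bullet$. The contravariant form of the same density argument — comparing ends over $\ford$ and over $\dihedral$ and using the very same computation $|U\dihedral[m]|\cong\OMon\times\ford^m$ on representables — yields a natural homeomorphism $\Nat_\dihedral(\stdDihedral^\bullet,Y^\bullet)\cong\Nat_\ford(\stdSim^\bullet,UY^\bullet)=\tot UY^\bullet$, transporting the action.

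The one genuinely non-formal input is the identification $|UX_\bullet|\cong X_\bullet\otimes_\dihedral\stdDihedral^\bullet$ (and its dual): its content is exactly that enlarging the indexing category from $\ford$ to $\dihedral$ leaves the realization unchanged and merely decorates it with the $\OMon$-action, which reduces to the homeomorphism $|U\dihedral[m]|\cong\OMon\times\ford^m$ and to carrying out the whole discussion in a convenient category of spaces, so that coends, ends, mapping spaces, finite products and quotients interact as expected. Everything else — equivariance of the structure maps of $\stdDihedral^\bullet$ and naturality in $X_\bullet$ and $Y^\bullet$ — is formal; this is the classical argument of Jones and of Fiedorowicz--Loday transposed to the dihedral category.
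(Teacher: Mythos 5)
Your overall strategy coincides with the paper's: identify $|X_\bullet|$ with the coend $X_\bullet\otimes_\dihedral\stdDihedral^\bullet$ and $\tot Y^\bullet$ with $\Nat_\dihedral(\stdDihedral^\bullet,Y^\bullet)$, and let $\OMon$ act through its action on $\stdDihedral^\bullet\cong\OMon\times\ford^\bullet$; the paper quotes \cite[Theorem 5.3]{FiedorowiczLodayCrossed} and \S3 of \cite{Jones} for exactly these identifications. The formal parts of your argument (equivariance of the structure maps of $\stdDihedral^\bullet$, naturality, reduction by density/co-Yoneda to the represented dihedral sets $\dihedral[m]$, and the dualization for totalizations) are fine and essentially reproduce the paper's reasoning.

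The gap is in the one step you yourself flag as non-formal. The unique factorization $\dihedral([n],[m])\cong\ford([n],[m])\times D_{n+1}^{\op}$ is only a levelwise bijection; it is \emph{not} compatible with the simplicial operators, so it does not give an isomorphism of simplicial sets $U\dihedral[m]\cong\ford[m]\times E_\bullet$. Writing a morphism as $\phi\circ g$ with $\phi\in\ford([n],[m])$ and $g\in\mathrm{Aut}_{\dihedral}([n])$, precomposition with $\alpha\in\ford([k],[n])$ gives $(\phi\circ g)\circ\alpha=(\phi\circ g_*(\alpha))\circ\alpha^*(g)$, where $g\circ\alpha=g_*(\alpha)\circ\alpha^*(g)$ is the crossed-simplicial-group factorization. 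The group coordinate transforms as in $E_\bullet$, but the $\ford$-coordinate is composed with $g_*(\alpha)$ rather than $\alpha$ (for instance $\tau_n\circ\delta^i=\delta^{i-1}\circ\tau_{n-1}$), so the bijection does not land in the product simplicial set: $U\dihedral[m]$ is a \emph{twisted} product. Consequently the homeomorphism $|U\dihedral[m]|\cong\OMon\times\ford^m$ does not follow from ``realization commutes with finite products,'' and the concluding ``direct inspection'' of the comparison map has no homeomorphism to compare against. This untwisting is precisely the nontrivial content of \cite[Theorem 5.3]{FiedorowiczLodayCrossed}, of \S3 of \cite{Jones}, and of the classical cyclic computation $|\cyclic[m]|\cong \TMon\times\ford^m$ (see Chapter 7 of \cite{LodayCyclic}), where the homeomorphism is built by explicit formulas rather than by a simplicial product decomposition; your dual argument for $\tot$ inherits the same gap. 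To make the proof self-contained you would need to construct such an explicit untwisting homeomorphism for the dihedral case and check it is the comparison map; otherwise, as the paper does, one should simply invoke the cited results at this point.
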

\begin{proof}See also Theorem 5.3 of \cite{FiedorowiczLodayCrossed} and \S3 of \cite{Jones}. We have that
\[
\begin{aligned}[c]
|X_\bullet|&=X_\bullet \otimes_{\ford} \stdSim^\bullet \cong X_\bullet \otimes_{\dihedral} \stdDihedral^\bullet \\
\tot Y^\bullet &= \Nat_\ford(\stdSim^\bullet,Y^\bullet)\cong \Nat_\dihedral(\stdDihedral^\bullet,Y^\bullet).
\end{aligned}\]
In both cases, the $\OMon$-action is now given by acting on $\stdDihedral^\bullet\cong \OMon \times \ford^\bullet$. Because the action is natural in the structure maps of $\stdDihedral^\bullet$, these actions are well defined and natural.\end{proof}

\begin{Rm}\label{rm:reflexive_action}
Note that every dihedral space is reflexive by forgetting along the inclusion $\reflexive\hookrightarrow\dihedral$. The resulting $C_2$-action is surprisingly simple, given that describing the $\TMon$-action on the realization of a cyclic space is not really explicit in the same way. If $x\in X_n$ and $\underline{t}=(t_0,\ldots,t_n)\in \ford^n$, the action of the generator of $C_2$ on the point $[x,\underline{t}]=[x,+1,\underline{t}]$ is $[x,-1,\underline{t}]=[x,\rho_n(+1,t_n,\ldots,t_0)]=[r_n(x),+1,t_n,\ldots,t_0]$.
\end{Rm}

\begin{Ex}\label{ex:realize_circle}The dihedral set $S^1_\bullet$ from Example \ref{ex:circle} realizes to the circle. The $C_2$-action is the map $z\mapsto z^{-1}$. This can be checked explicitly using the identification $S^1\cong |S^1_\bullet|: z=e^{2\pi i \theta}\mapsto [1, (\theta,1-\theta)]$ where $1$ is the fundamental simplex $1 \in \Z/2\Z = S^1_1$. We also identify the totalization of the codihedral mapping space as $\tot(\Map(S^1_\bullet,X))\cong_{\OMon}\loops X$.\end{Ex}

\section{Cyclic and dihedral homology}\label{sec:dihedral_homology}
Although the use of cyclic homology is widespread, Loday's dihedral homology is less commonly known. The material presented in this section is based on the various treatments in the literature, especially on \cite{KLS,FiedorowiczLodayCrossed,LodderFixedPoints,LodderKTheory,LodayCyclic} and of course the original source \cite{LodayDihedrale}. Although our definitions of dihedral homology and cohomology turn out to coincide with those in the literature, the presentation is somewhat different.

Because our aim is to produce an algebraic model for homotopy orbits of an $\OMon\mhyphen$space, our definitions of cyclic and dihedral homology will be in analogy to constructions in $\Top$. In particular, we will abuse notation by writing $\TMon=H_*(\TMon;\ring)$ and $\OMon=H_*(\OMon;\ring)$ for the graded algebras obtained by applying singular homology to the two topological groups $\TMon$ and $\OMon$. We see that $\TMon$ is generated by $B$, the fundamental class of the circle, which is of degree one and satisfies $B^2=0$. The algebra $\OMon=\TMon\rtimes C_2$ has an additional generator $R$ of degree zero, which satisfies $R^2=0$ and $RB=-BR$. We are especially interested in differential graded modules over these algebras.

\begin{Ex}Let $E\TMon_*$ be the normalized total complex of the two sided bar construction $B_\bullet(\ring,\TMon,\TMon)$. This is a free contractible right differential graded $\TMon\mhyphen$module of the form $\ring[u^{-1}]\otimes \TMon$ where $\degree{u}=-2$ and the differential is $u^{-p} \otimes 1\mapsto u^{-p+1} \otimes 1, u^{-p}\otimes B\mapsto 0$. The group $C_2$ acts on $\TMon$ by $B\mapsto -B$, so the corresponding simplicial action on $E\TMon_*$ is $u^{-p} \otimes 1 \mapsto (-1)^pu^{-p} \otimes 1$ and $u^{-p}\otimes B\mapsto (-1)^{p+1}u^{-p} \otimes B$. That gives us a free contractible right differential graded $\OMon\mhyphen$module $E\OMon_*\coloneqq E\TMon_* \otimes (EC_2)_*$. Here $EC_2$ denotes the periodic resolution of the trivial $C_2$-module that in every non-negative degree is given by $\ring C_2$. The differential on an element of degree $p$ is multiplication by $g+(-1)^p 1$, where $g$ is the generator of $C_2$. We can write $EC_2$ as $\ring C_2 \otimes \ring [v^{-1}]$ with $\degree{v}=-1$ with a non-trivial differential.
\end{Ex}

\begin{Ex} (See also \S4 of \cite{Jones}) Let $W$ be a $\TMon\mhyphen$space with action map $\mu\colon\TMon\times W\rightarrow W$. The formula $B(\sigma)=\mu_*(z\times \sigma)$ defines a left differential graded $\TMon\mhyphen$module structure on the singular chains $S_*(W)$. Here $[\TMon]$ is the fundamental cycle of $\TMon$. If $W$ was an $\OMon\mhyphen$space, the chains also form an $\OMon\mhyphen$module. \end{Ex}

\begin{Ex}\label{ex:total_dihedral}The totalization of a cyclic chain complex is a $\TMon\mhyphen$module and the totalization of a dihedral chain complex is an $\OMon\mhyphen$module. Using the structure maps we may define the following operations in simplicial degree $n$: The simplicial boundary map $b_n=\Sigma_{i=0}^n(-1)^i d_i$, the cyclic generator $ T_n=(-1)^nt_n$, the generator of the $C_2$-action $R_n=(-1)^{n(n+1)/2}r_n$ and the norm operator $N_n=\id + T + T^2 + \ldots T^n$. These in turn allow us to define Connes' B operator $ B_n=(-1)^{\textit{int}}(1-T)t_{n+1}s_nN$. The operations $T$ and $R$ form chain maps with respect to both the internal differential and $b$ whereas $B$ anticommutes with both. The operations also satisfy the relations $(T_n)^{n+1}=R^2=\id, RTR=T^{-1}, B^2=b^2=0, BR=-RB$. The proofs of most of these properties and identities are found in Chapter 2 of \cite{LodayCyclic}.\end{Ex} 

\begin{Def}
Let $M_*$ be a (differential graded) left $\TMon\mhyphen$module. We define $M_{h\TMon} \coloneqq E\TMon_*\otimes_\TMon M_*$ and $M^{h\TMon} \coloneqq \ChHom_\TMon(E\TMon_*,M_*) \subset \ChHom(E\TMon_*,M_*)$. If $M_*$ is moreover an $\OMon\mhyphen$module, we similarly define $M_{h\OMon} \coloneqq E\OMon_*\otimes_\OMon M_*$ and $M^{h\OMon} \coloneqq \ChHom_\OMon(E\OMon_*,M_*)$. 
\end{Def}

\begin{Prop}\label{prop:alg_orbits} Let $M_*$ be a differential graded left $\OMon\mhyphen$module, then
\[M_{h\OMon}=(M_{h\TMon})_{hC_2} \quad \textit{and} \quad M^{h\OMon}=(M^{h\TMon})^{hC_2}.\]
\end{Prop}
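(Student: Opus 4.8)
The plan is to unwind the definitions of $M_{h\OMon}$, $M^{h\OMon}$ and then exploit two facts recorded above: the decomposition $E\OMon_* = E\TMon_* \otimes (EC_2)_*$, and the semidirect product structure $\OMon = \TMon \rtimes C_2$, which presents $\OMon$ as generated by the subalgebra $\TMon$ together with $R$, subject only to the crossed relation $RB=-BR$ (and the relations internal to the two pieces). The upshot is that the relative tensor product $(-)\otimes_\OMon(-)$, and dually $\ChHom_\OMon(-,-)$, may be computed in two stages: first relative to $\TMon$, then relative to the residual $C_2$.

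First I would equip $M_{h\TMon} = E\TMon_*\otimes_\TMon M_*$ and $M^{h\TMon} = \ChHom_\TMon(E\TMon_*, M_*)$ with natural $C_2$-actions. Here $C_2$ acts on $E\TMon_*$ by the sign-twisted involution recorded in the example above (induced from $B\mapsto -B$) and on $M_*$ by restricting the $\OMon$-action along $C_2\subset\OMon$; these are compatible with the right, respectively left, $\TMon$-module structures in the sense dictated by $\OMon=\TMon\rtimes C_2$ (concretely, via $RB=-BR$), so the diagonal $C_2$-action on $E\TMon_*\otimes M_*$ descends to the coequalizer $M_{h\TMon}$, i.e.\ it sends the relation $e\cdot b\otimes m \sim e\otimes b\cdot m$ (for $b\in\TMon$) to itself. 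Dually, $C_2$ acts on $M^{h\TMon}$ by conjugation, and this preserves $\TMon$-equivariance for the same reason.

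For the orbits I would argue as follows: $M_{h\OMon}=E\OMon_*\otimes_\OMon M_* = \big(E\TMon_*\otimes (EC_2)_*\big)\otimes_\OMon M_*$. Since the right $\TMon$-action on $E\OMon_*$ only touches the $E\TMon_*$-factor, coequalizing the $\TMon$-action first gives $\big(E\TMon_*\otimes_\TMon M_*\big)\otimes (EC_2)_* = M_{h\TMon}\otimes (EC_2)_*$; then coequalizing the remaining $C_2$-action — which by the very definition of $E\OMon_*$ is diagonal on the $E\TMon_*$-factor and right multiplication on the $(EC_2)_*$-factor, and acts on $M_*$ through $\OMon$ — produces $(EC_2)_*\otimes_{C_2} M_{h\TMon} = (M_{h\TMon})_{hC_2}$. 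That this two-step coequalizer genuinely computes $\otimes_\OMon$ is exactly the statement that $\OMon$ is generated by $\TMon$ and $R$ with no relations beyond those already used. The same bookkeeping, run through the tensor--hom adjunction over $\ring$, handles the fixed points: $\ChHom_\OMon\big(E\TMon_*\otimes (EC_2)_*, M_*\big)\cong \ChHom_{C_2}\big((EC_2)_*, \ChHom_\TMon(E\TMon_*, M_*)\big)$, where on the left $\OMon$-equivariance splits into $\TMon$-equivariance in the inner slot (so the adjoint map lands in $M^{h\TMon}$) together with diagonal $C_2$-equivariance (so the adjoint map is $C_2$-equivariant for the conjugation action), exhibiting the right-hand side as $(M^{h\TMon})^{hC_2}$.

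The part that needs genuine care — essentially the only content beyond formal manipulation — is checking that the $C_2$-action produced on $M_{h\TMon}$ and on $M^{h\TMon}$ by this procedure is precisely the one tacitly used on the right-hand sides of the claim, that is, tracking the sign-twisted $C_2$-action on $E\TMon_*$ and the $RB=-BR$ compatibility through the two coequalizers and through the adjunction without sign errors. No homological algebra (resolutions, spectral sequences, flatness) enters, since $M_{h\TMon}$, $M_{h\OMon}$ and their fixed-point analogues are defined directly as these explicit complexes rather than as derived functors.
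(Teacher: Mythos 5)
Your proposal is correct and follows essentially the same route as the paper: the paper likewise uses $E\OMon_*=E\TMon_*\otimes (EC_2)_*$ together with the semidirect product structure to compute $\otimes_\OMon$ in two stages, $\ring\otimes_\OMon(-)=\ring\otimes_{C_2}(\ring\otimes_\TMon(-))$, noting that $\TMon$ acts only on the $E\TMon_*$ factor, and handles the fixed-point statement by the analogous (adjunction) argument. Your explicit tracking of the residual $C_2$-action via $RB=-BR$ is exactly the point the paper leaves implicit in its "similar argument" remark.
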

\begin{proof}We can convert left into right modules and visa versa using the Hopf algebra structures of $\OMon$ and $\TMon$. Also, we can use the semi direct product structure $\OMon=\TMon \rtimes C_2$ to break down the tensor product over $\OMon$ into two steps $\ring \otimes_\OMon (-) = \ring \otimes_{C_2} (\ring \otimes_\TMon (-))$. We see that
\[M_{h\OMon}=E\OMon \otimes_\OMon M = \ring \otimes_\OMon (E\TMon \otimes EC_2 \otimes M)= \ring \otimes_{C_2} (\ring \otimes_\TMon (E\TMon \otimes EC_2 \otimes M)).\]
Because $\TMon$ acts trivially on the $EC_2$ factor we conclude that \[M_{h\OMon}=EC_2\otimes_{C_2} (E\TMon \otimes_\TMon M)=(M_{h\TMon})_{hC_2}.\] A similar argument shows that $M^{h\OMon}=(M^{h\TMon})^{hC_2}$.
\end{proof}

Combining the proposition with the concrete expressions we see that $M_{h\TMon}=M[u^{-1}]$ with the differential
\[u^{-q}m\mapsto u^{-q}d_Mm + u^{-q+1}Bm,\]
and $M_{h\OMon}=M[v^{-1},u^{-1}]$ with the differential
\begin{align*}
v^{-p}u^{-q}m\mapsto &(-1)^p(v^{-p}u^{-q}d_Mm + v^{-p}u^{-q+1}Bm)\\ 
&+ v^{-p+1}u^{-q}((-1)^qR+(-1)^p)m.
\end{align*}
Note that the degree of $v^{-1}$ is $1$ and the degree of $u^{-1}$ is $2$. The signs in the last term are the differential $g+(-1)^p1$ coming from the periodic resolution of the constant $C_2$-module, applied to $M_{h\TMon}$.

\begin{Def}
Let $A$ be an involutive dga. We define the \emph{Hochschild complex} $C_*(A)$ to be the normalized total complex of the dihedral chain complex $B^{\cyc}A$ of Example \ref{ex:cyclic_bar}. We then define the \emph{cyclic chains} and the \emph{negative cyclic chains} to be $CC_*(A)=C_*(A)_{h\TMon}$ and $CC_*^-(A)=C_*(A)^{h\TMon}$. Similarly we define the \emph{dihedral chains} and \emph{negative dihedral chains} to be $DC_*(A)=C_*(A)_{h\OMon}$ and $DC_*^-(A)=C_*(A)^{h\OMon}$. The corresponding homologies are denoted $HH_*(A), CH_*(A), CH_*^-(A),DH_*(A)$ and $DH_*^-(A)$.
\end{Def}

\begin{Prop}\label{prop:orbit_equivalence}Let $\phi\colon M_*\rightarrow N_*$ be a map of differential graded $\OMon\mhyphen$modules. If $\phi$ is a quasi isomorphism, then the associated maps $\psi_{h\OMon}\colon M_{h\OMon}\rightarrow N_{h\OMon}$ and $\psi^{h\OMon}\colon M^{h\OMon}\rightarrow N^{h\OMon}$ are also quasi isomorphisms.\end{Prop}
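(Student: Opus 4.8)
The plan is to reduce to the case of an acyclic module and then run a single filtration argument on the explicit complexes $M_{h\OMon}=M[v^{-1},u^{-1}]$ and $M^{h\OMon}$ described just after Proposition~\ref{prop:alg_orbits}. First I would reduce to the claim: if $C_*$ is an acyclic differential graded $\OMon$-module, then $C_{h\OMon}$ and $C^{h\OMon}$ are acyclic. The functors $E\OMon_*\otimes_\OMon(-)$ and $\ChHom_\OMon(E\OMon_*,-)$ are additive, hence commute with the formation of mapping cones, so $\mathrm{cone}(\phi_{h\OMon})\cong(\mathrm{cone}\,\phi)_{h\OMon}$ and $\mathrm{cone}(\phi^{h\OMon})\cong(\mathrm{cone}\,\phi)^{h\OMon}$; since $\phi$ is a quasi-isomorphism, $\mathrm{cone}\,\phi$ is an acyclic $\OMon$-module, and a chain map is a quasi-isomorphism exactly when its mapping cone is acyclic, so this reduction is legitimate.

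For homotopy orbits, I would write $C_{h\OMon}=\bigoplus_{p,q\ge 0}v^{-p}u^{-q}C$ with the differential recalled in the excerpt and filter by the total number $p+q$ of $u^{-1}$- and $v^{-1}$-factors. This is an increasing, exhaustive filtration with $F_{-1}=0$; the internal differential $d_C$ preserves this count whereas the $B$- and $R$-parts of the differential strictly lower it, so the associated graded $\mathrm{gr}_w$ is a direct sum of copies of $(C,\pm d_C)$, which is acyclic because $C$ is. An induction on $w$ via the short exact sequences $0\to F_{w-1}\to F_w\to\mathrm{gr}_w\to 0$ shows every $F_w$ is acyclic, and since homology commutes with the filtered colimit $C_{h\OMon}=\mathrm{colim}_w F_w$, the complex $C_{h\OMon}$ is acyclic.

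For homotopy fixed points, $C^{h\OMon}$ is computed by the product complex $\prod_{p,q\ge 0}v^pu^qC$ dual to the above, on which the count $p+q$ now defines a complete, Hausdorff, decreasing filtration $F^w$ whose associated graded again carries only $\pm d_C$ and is therefore acyclic. Here $C^{h\OMon}=\varprojlim_w C^{h\OMon}/F^w$; each quotient $C^{h\OMon}/F^w$ is a finite direct sum, hence acyclic by the finite case of the previous argument, and the transition maps $C^{h\OMon}/F^{w+1}\to C^{h\OMon}/F^w$ are degreewise surjective, so the Milnor exact sequence $0\to\varprojlim^1_w H_{*+1}(C^{h\OMon}/F^w)\to H_*(C^{h\OMon})\to\varprojlim_w H_*(C^{h\OMon}/F^w)\to 0$ forces $H_*(C^{h\OMon})=0$.

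The one step that genuinely needs care is this last one: because the fixed-point complex is an infinite product, one must rule out a surviving $\varprojlim^1$ contribution, which is precisely what the surjectivity of those transition maps provides, and that surjectivity holds because the evident finite-rank free submodules of $E\OMon_*$ split off as graded modules. Everything else is bookkeeping with the differentials already displayed in the excerpt. Conceptually, the proposition is nothing more than the standard fact that tensoring over $\OMon$ with, or taking equivariant $\ChHom$ out of, a bounded-below complex of free $\OMon$-modules preserves quasi-isomorphisms, the filtration above being a hands-on version of this.
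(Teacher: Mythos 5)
Your argument is correct, and it reaches the conclusion by a more self-contained route than the paper. The paper's proof is short: it uses the factorizations $M_{h\OMon}=(M_{h\TMon})_{hC_2}$ and $M^{h\OMon}=(M^{h\TMon})^{hC_2}$ from Proposition \ref{prop:alg_orbits} and then invokes the double complex arguments of Lemma 2.1 (ii), (iii) of \cite{Jones} for the $\TMon$-stage, observing that the same argument handles the $C_2$-stage. You instead treat $\OMon$ in one go: reduce to the acyclic case via mapping cones (legitimate, since $E\OMon_*\otimes_\OMon(-)$ and $\ChHom_\OMon(E\OMon_*,-)$ are additive and commute with cones), filter $C[v^{-1},u^{-1}]$ by the weight $p+q$, note that the internal differential preserves the weight while the $B$- and $R$-components shift it, and conclude by exhaustion (filtered colimit) for orbits and by completeness plus the Milnor $\varprojlim^1$ sequence for fixed points. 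The underlying mechanism is the same filtration/double-complex idea that sits inside Jones's lemma, so conceptually the proofs agree; what your version buys is that the convergence issue for the product-type fixed-point complex is made explicit rather than delegated to the citation (the degreewise surjectivity of the tower $C^{h\OMon}/F^{w}$ is indeed immediate, being projection of a product onto a partial product, so your appeal to split free submodules of $E\OMon_*$ is more than is needed), while the paper's version is shorter and reuses the semidirect-product decomposition it has already set up.
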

\begin{proof}The double complex arguments in the proofs of parts ii and iii of Lemma 2.1 in \cite{Jones} imply that the maps $\psi_{h\TMon}$ and $\psi^{h\TMon}$ are quasi isomorphisms. The same proof can be used to show that the functors $(-)_{hC_2}$ and $(-)^{hC_2}$ preserve quasi isomorphisms. As $\psi_{h\OMon}=(\psi_{h\TMon})_{hC_2}$ and $\psi^{h\OMon}=(\psi^{h\TMon})^{hC_2}$ are compositions of these functors, both maps are quasi isomorphisms.
\end{proof}

\subsection*{Comparison with other definitions}
In the literature, starting with \cite{LodayDihedrale}, it is common to define the dihedral homology of a dihedral $\ring$-module $M_\bullet$ as $HD_*(M_\bullet)=Tor_*^{\dihedral}(\ring^\dagger,M_\bullet)$, where $\ring^\dagger$ denotes the trivial (co)dihedral $\ring$-module. Several different chain complexes are available for computing the homology. In particular, every resolution of $\ring^\dagger$ yields such a chain complex. For example, one could resolve all the dihedral groups and patch them together to get a resolution of the trivial module. For the case when $2$ is invertible in our base ring $\ring$ this is in fact what Loday did in \cite{LodayDihedrale} and a version without this assumption first appeared in \cite{LodderLoops}. When working with cyclic homology it is common to take the cyclic analogue of this complex and contract a subcomplex, obtaining the $(B,b)$-complex. This procedure can also be applied for dihedral homology to obtain a $(B,b)$ version of dihedral chains, see also \cite[Proposition 1.7]{LodayDihedrale} and \cite[Lemma 2.2]{LodderFixedPoints}. In fact, this can be used to see that our definition of dihedral homology is isomorphic to the $Tor$ definition.

In \cite{LodderFixedPoints} Lodder discusses several possible definitions for the negative variant of dihedral homology. One of these is called $\mathcal{D}III$, and our definition of negative dihedral homology coincides with the hyperhomology version of this definition. Although it does not seem to be mentioned explicitly, it seems that $HD_*^-(M)=Ext_{\dihedral}^{-*}(\ring^\dagger, M)$.

\section{Dihedral Goodwillie isomorphism}\label{sec:goodwillie}
This section is a summary of how a dihedral version of the Goodwillie isomorphism is proven in \cite{DunnDihedral}. In \cite{DunnDihedral}, all topological space are assumed to be compactly generated and LEC means that the diagonal map is a cofibration. CW complexes are examples of LEC spaces.
\begin{Th}[\cite{DunnDihedral} Th 3.6] Let $G$ be a group-like topological LEC unital monoid with involution, i.e., with a self map $\inv{(-)}:G\rightarrow G$ satisfying $\inv{a\cdot b}=\inv{b}\cdot \inv{a}$ and $\inv{e}=e$. Then for $\ring$ a ring we have an isomorphism \[HD_*(S_*(G;\ring))\cong H_*((\loops BG)_{h\OMon}; \ring)\]
Here the differential graded algebra $S_*(G;\ring)$ carries the involution induced by the involution of the monoid $G$ and $\loops BG$ has the involution that both reverses the direction of loops and uses $BG\xrightarrow{B \inv{( \cdot) } } BG$.\end{Th}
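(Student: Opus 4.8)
The plan is to realize $\loops BG$ as the geometric realization of a dihedral space and then transport the whole computation to chains, generalizing the cyclic statement of Theorem \ref{Th:Goodwillie} to the $O(2)$-setting. For a topological monoid $G$ let $N^{\cyc}_\bullet G$ be the cyclic nerve: the cyclic space with $N^{\cyc}_n G=G^{n+1}$, faces multiplying adjacent factors (with the last factor wrapping around to the first), degeneracies inserting the unit, and cyclic operator $t_n$ rotating the factors. When $G$ is group-like the canonical map $|N^{\cyc}_\bullet G|\to\loops BG$ is a weak equivalence, and in fact a $\TMon$-equivariant one. If $G$ carries a monoid anti-involution $\inv{(-)}$, then the formula of Example \ref{ex:cyclic_bar} --- the reflection $r_n$ reverses the order of the $n+1$ factors and applies $\inv{(-)}$ to each --- makes $N^{\cyc}_\bullet G$ a dihedral space, and --- this is the crux --- the weak equivalence above can be promoted to an $\OMon$-equivariant one in which the $C_2$-part of $\OMon$ acts on $\loops BG$ by simultaneously reversing the direction of loops and applying $B\inv{(-)}$. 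The LEC hypothesis enters here: it makes $N_\bullet G$ and $N^{\cyc}_\bullet G$ proper simplicial spaces, so that their realizations carry the expected (equivariant) homotopy type.

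Granting this, $H_*((\loops BG)_{h\OMon};\ring)\cong H_*(|N^{\cyc}_\bullet G|_{h\OMon};\ring)$. The $\OMon$-equivariant realization theorem --- obtained as in Section \ref{sec:dihedral_objects} from the standard codihedral space and the identification $|X_\bullet|\cong X_\bullet\otimes_\dihedral\stdDihedral^\bullet$ --- gives an $\OMon$-equivariant quasi-isomorphism $S_*(|N^{\cyc}_\bullet G|)\simeq\Tot_\oplus S_*(N^{\cyc}_\bullet G)$. Combining this with the comparison of chains on homotopy orbits with the algebraic homotopy orbits $(-)_{h\OMon}$ of Section \ref{sec:dihedral_homology} (the same comparison as Step \ref{i:compare_orbits} of the outline) and with the fact that $(-)_{h\OMon}$ preserves quasi-isomorphisms (Proposition \ref{prop:orbit_equivalence}), one identifies $H_*((\loops BG)_{h\OMon};\ring)$ with the homology of $(\Tot_\oplus S_*(N^{\cyc}_\bullet G))_{h\OMon}$, that is, with the dihedral homology of the dihedral chain complex $S_*(N^{\cyc}_\bullet G)$.

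It remains to compare this dihedral chain complex with the cyclic bar construction $B^{\cyc}S_*(G)$ of Example \ref{ex:cyclic_bar}. In each simplicial degree the iterated shuffle map is a quasi-isomorphism $(B^{\cyc}S_*(G))_n=S_*(G)^{\otimes n+1}\xrightarrow{\ \sim\ }S_*(G^{n+1})=S_*(N^{\cyc}_\bullet G)_n$, and since the shuffle map is natural and lax symmetric monoidal it is automatically compatible with all the structure maps of the cyclic bar construction: the faces and degeneracies (multiplications and unit insertions, handled by naturality together with the definition of the induced product on $S_*(G)$), the cyclic operators (handled by the Koszul signs built into the symmetry of the tensor product), and the reflections. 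Thus the shuffle maps assemble into a quasi-isomorphism of dihedral chain complexes $B^{\cyc}S_*(G)\to S_*(N^{\cyc}_\bullet G)$, and a double-complex / spectral-sequence argument of the kind used for Proposition \ref{prop:orbit_equivalence} shows that the induced map on $(-)_{h\OMon}$, hence on dihedral homology, is an isomorphism. This gives $HD_*(S_*(G;\ring))\cong H_*((\loops BG)_{h\OMon};\ring)$. One could also organize the argument via Proposition \ref{prop:alg_orbits}, taking the classical cyclic Goodwillie isomorphism as input and only tracking the additional $C_2$-action through it.

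I expect the main obstacle to be the $\OMon$-equivariant identification $|N^{\cyc}_\bullet G|\simeq_\OMon\loops BG$. The underlying non-equivariant equivalence is standard and the $\TMon$-equivariant refinement is classical, but upgrading it to the full $\OMon$-action --- and in particular checking, via the combinatorics of the standard (co)dihedral space, that the dihedral reflection corresponds exactly to loop reversal twisted by $B\inv{(-)}$ --- is where the real work lies, together with the properness bookkeeping that the LEC hypothesis is there to supply. The other genuinely substantive ingredient is the comparison of chains on homotopy orbits with algebraic homotopy orbits; once both are in hand, everything else is the double-complex formalism already packaged in Propositions \ref{prop:orbit_equivalence} and \ref{prop:alg_orbits}.
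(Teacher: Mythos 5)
Your proposal is correct and follows essentially the same route as the paper's summary of Dunn's proof: the two substantive ingredients you isolate --- the Eilenberg--Zilber (shuffle) quasi-isomorphism of dihedral chain complexes $B^{\cyc}S_*G \to S_*(B^{\cyc}G)$ and the $\OMon$-equivariant identification $|B^{\cyc}G|\simeq_\OMon \loops BG$ for group-like LEC $G$ --- are exactly the cited Propositions 3.5 and 2.10 of \cite{DunnDihedral}. The only difference is cosmetic: you justify the middle identification $HD_*(S_*Y_\bullet)\cong H_*\bigl(|Y_\bullet|_{h\OMon}\bigr)$ by totalizing chains and invoking the topological-versus-algebraic homotopy-orbit comparison of Sections \ref{sec:compare_orbits}--\ref{sec:compare_tot}, whereas the paper quotes the hypertor/homotopy-colimit statement over $\dihedral$ from \cite{DunnDihedral}, generalizing \cite{FiedorowiczLodayCrossed}.
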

\begin{proof}The proof can be broken down into a few steps. First we use the Eilenberg--Zilber maps to construct a quasi isomorphism of dihedral chain complexes $B^{cyc}S_*G\xrightarrow{\simeq} S_*(B^{cyc}G)$, which is Proposition 3.5 \cite{DunnDihedral}. Here $B^{cyc}$ denotes the cyclic bar construction, promoted to a dihedral object as in Example \ref{ex:cyclic_bar}. In the first instance this is done in $(\Ch,\otimes)$ and in the second in $(\Top, \times)$. Then we use \cite[3.3]{DunnDihedral}: If $Y_\bullet$ is a (good) dihedral space (e.g., $B^{cyc}G$), then $HD_*(S_*Y_\bullet)\cong H_*(\hocolim_{\ford D}Y_\bullet)$. This can be seen using a statement about hypertor of functors \[HD_*(S_*Y_\bullet)\cong\htor_n^{\ford D}(\ring,S_*Y)\cong H_n(\hocolim_{\ford D} Y_\bullet; \ring),\] which is a generalization of Theorem 6.12 of \cite{FiedorowiczLodayCrossed}. The final ingredient is \cite[2.10]{DunnDihedral} $\hocolim_{\ford D}B^{cyc}G\simeq \loops BG_{h\OMon}$, which follows from the fact that $|B^{cyc}G|\simeq_{\OMon} \loops BG$.\end{proof}

\begin{Prop}There is a natural $C_2$-equivariant map
\[\xi\colon B\Moore X\rightarrow X\]
that is a weak equivalence if $X$ is connected. Here we use the trivial $C_2$-action on $X$ and the action on $B\Moore X$ coming from the involution on $\Moore X$ by reversing loops, see also Example \ref{ex:bar}.\end{Prop}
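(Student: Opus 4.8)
The plan is to produce $\xi$ as the classical weak equivalence $B\Moore X\to X$, realized through a two-sided bar construction, and then to trace the $C_2$-action of loop reversal through that construction. Let $PX$ be the space of Moore paths in $X$ beginning at the basepoint; it is contractible and carries a free left $\Moore X$-action by pre-concatenation, an action that is fibrewise over the endpoint-evaluation map $e\colon PX\to X$. Form the two-sided bar construction $B_\bullet(*,\Moore X,PX)$ of spaces. The augmentation $PX\to *$ induces $|B_\bullet(*,\Moore X,PX)|\to|B_\bullet(*,\Moore X,*)|=B\Moore X$, which is a weak equivalence because $\Moore X$ is well pointed, so the bar constructions are proper, and $PX\simeq *$. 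The map $e$ induces $|B_\bullet(*,\Moore X,PX)|\to X$; since the source models the homotopy quotient $(PX)_{h\Moore X}\simeq PX/\Moore X$ (the action being free and suitably nice), this is the Moore-loops incarnation of the classical equivalence $B\Moore X\simeq X$ for connected $X$: the fibre of $PX/\Moore X\to X$ over a point $y$ is the quotient of the path space from $x_0$ to $y$ --- a free $\Moore X$-space equivalent to $\Moore X$ --- by the free $\Moore X$-action, hence weakly contractible. So I take $\xi$ to be the resulting natural zig-zag $B\Moore X\xleftarrow{\ \sim\ }|B_\bullet(*,\Moore X,PX)|\xrightarrow{\ \sim\ }X$, i.e.\ a natural isomorphism in the homotopy category.

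To upgrade this to $C_2$-equivariance I would put a $C_2$-action on the middle term. The involution on $\Moore X$ is loop reversal $\overline{(-)}$, an anti-automorphism of the monoid, and on $B\Moore X=|N_\bullet\Moore X|$ it induces precisely the reflexive-space structure of Example \ref{ex:bar}, whose realized action is described by Remark \ref{rm:reflexive_action}. Path reversal is a homeomorphism from $PX$ onto the space $P'X$ of Moore paths ending at the basepoint, and it intertwines the left $\Moore X$-action on $PX$ with the right $\Moore X$-action on $P'X$, since $\overline{\omega*\gamma}=\overline\gamma*\overline\omega$. Combining loop reversal on the $\Moore X$-factors, path reversal on the path factor, and the simplicial-direction reversal built into $\rho_n$ therefore makes $B_\bullet(*,\Moore X,PX)$ into a reflexive space: the reversal wants to carry the path factor from the last slot to the first, and the $[n]\mapsto[n]^{\op}$ symmetry of $\rho_n$ cancels exactly that. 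Then I would check two compatibilities. Along $PX\to *$ this structure restricts to the reflexive structure on $N_\bullet\Moore X$, so the first map of the zig-zag is $C_2$-equivariant. And the evaluation map to $X$ is $C_2$-invariant: reversing a Moore path from $x_0$ to $y$ yields one from $y$ to $x_0$, still carrying $y$ as its non-basepoint end, i.e.\ $e=e'\circ\overline{(-)}$ with $e'\colon P'X\to X$ the start-point evaluation, and under the reflexive identification the path factor evaluated by $e$ is carried to the one evaluated by $e'$; hence the second map of the zig-zag is $C_2$-equivariant for the trivial action on $X$. Both checks are bookkeeping once the signs are fixed.

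Putting these together, $\xi$ is a $C_2$-equivariant weak equivalence for connected $X$, and naturality is immediate from the functoriality of $\Moore X$, $PX$ and the bar construction. The main obstacle is the second paragraph: path reversal does \emph{not} preserve ``Moore paths beginning at the basepoint'' but interchanges it with ``Moore paths ending at the basepoint'', so one must match this asymmetry against the simplicial-direction reversal of the reflexive structure in order to obtain a genuine $C_2$-action on $|B_\bullet(*,\Moore X,PX)|$ rather than merely a reversal-swap between two distinct bar constructions. A secondary point is the routine point-set care --- properness, well-pointedness, and quasifibration arguments --- needed to know the two comparison maps are weak equivalences, and the fact that $\xi$ is honestly only a natural zig-zag unless one fixes cofibrant models.
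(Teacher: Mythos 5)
Your overall plan (build the classical equivalence via a two-sided bar construction and then chase loop reversal through it) is different from the paper's, and its key step is exactly the point you flag as ``the main obstacle'' --- and that step, as described, does not go through. A reflexive structure on $B_\bullet(*,\Moore X,PX)$ must consist of \emph{self}-maps $r_n\colon (\Moore X)^n\times PX\to(\Moore X)^n\times PX$ satisfying $d_ir_n=r_{n-1}d_{n-i}$; in particular $r_n$ has to intertwine $d_0$ (delete the first loop) with $d_n$ (act the last loop on the path factor). Reversing everything sends $(\gamma_1,\dots,\gamma_n;\omega)$ into $P'X\times(\Moore X)^n$, i.e.\ it is a map to the \emph{other} bar construction $B_\bullet(\inv{PX},\Moore X,*)$, not an involution of the given one; the index reversal in $\rho_n$ only permutes the face maps, it cannot move the module slot from the right to the (trivial) left. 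The obvious repair --- keep the path factor in place and transport it by $\omega\mapsto(\gamma_1\cdots\gamma_n)\cdot\omega$ --- fails the relation $d_nr_n=r_{n-1}d_0$ on the nose, because $\inv{\gamma}\,\gamma$ is not the constant loop in the Moore loop monoid (reversal is an anti-involution, not an inverse). So the middle term of your zig-zag carries at best a homotopy-coherent $C_2$-action, which you neither construct nor feed into the later homotopy-orbit comparisons; as written there is no $C_2$-action to be equivariant for. (A genuine fix exists but is more than bookkeeping: replace the middle term by $B_\bullet(\inv{PX},\Moore X,PX)$ with the swap-and-reverse involution $r_n(\lambda;\gamma_1,\dots,\gamma_n;\omega)=(\inv{\omega};\inv{\gamma_n},\dots,\inv{\gamma_1};\inv{\lambda})$, which \emph{is} a strict reflexive space compatible with Example \ref{ex:bar}; but then the comparison map to $X$ can no longer be endpoint evaluation, since the endpoint is not $r$-invariant, and one must instead map equivariantly to the full Moore path space and retract to $X$, redoing the weak-equivalence argument.)

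By contrast, the paper avoids the zig-zag entirely: it takes May's explicit map $\xi[\gamma_1,\dots,\gamma_p;u]=(\gamma_1\cdots\gamma_p)(\Sigma_i u_ia_i)$ from Lemma 15.4 of \cite{MayClassifyingSpacesFibrations}, which is already defined on $B\Moore X$ and already known to be a weak equivalence for connected $X$, so the only thing to prove is equivariance --- a one-line evaluation check that $\xi[\inv{\gamma_p},\dots,\inv{\gamma_1};\inv{u}]=\xi[\gamma_1,\dots,\gamma_p;u]$, using the action of Remark \ref{rm:reflexive_action}. Note also that the Proposition asserts a natural $C_2$-equivariant \emph{map} $B\Moore X\to X$, whereas even a repaired version of your argument only yields a natural equivariant zig-zag; that would still suffice for the dihedral Goodwillie corollary, but it is weaker than the stated claim and should be acknowledged as such. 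Finally, the phrase ``free action, hence $PX/\Moore X\simeq X$'' needs care: $\Moore X$ is a monoid, not a group, so the identification of the bar construction with $X$ should go through a quasifibration or May's comparison rather than a free-quotient argument.
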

\begin{proof}We begin by describing the map as defined in \cite[Lemma 15.4]{MayClassifyingSpacesFibrations}. It is also shown there that this map is a weak equivalence if $X$ is connected. By viewing the classifying space as the realization of a bar construction, we may define $\xi [\gamma_1,\ldots ,\gamma_n;u]=(\gamma_1\ldots\gamma_n)(\Sigma_{1\leq i\leq p}u_i a_i)$. Here $\gamma_i\in \Moore X$ of length $a_i$ , $u=(t_0,\ldots,t_p)\in \ford^p$ and $u_i=t_0 + \ldots + t_{i-1}$. The source carries a $C_2$-action because it is the realization of a reflexive object, see also Remark \ref{rm:reflexive_action}. The generator of $C_2$ acts as $[\inv{\gamma_n},\ldots,\inv{\gamma_1};\inv{u}]$ and a quick calculation shows that $\xi([\inv{\gamma_n},\ldots,\inv{\gamma_1};\inv{u}])= \xi([\gamma_1,\ldots,\gamma_p;u])$. From this calculation it is also clear what to do when $X$ is a $C_2$-space: One may simply add this action to the involution of the monoid $\Moore X$.
\end{proof}

\begin{Cor}[Dihedral Goodwillie Isomorphism]For $X$ a connected LEC space and $\ring$ any ring. \[HD_*(S_*(\Moore X);\ring)\cong H_*((\loops X)_{h\OMon};\ring)\]\end{Cor}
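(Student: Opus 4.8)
The plan is to deduce this from Dunn's Theorem 3.6 together with the $C_2$-equivariant equivalence $\xi$ of the previous proposition. First I would apply Dunn's theorem to the monoid $G=\Moore X$ of Moore loops on $X$: it is group-like because $X$ is connected, so that $\pi_0\Moore X=\pi_1 X$ is a group; it is a unital LEC topological monoid when $X$ is a connected LEC space; and it carries the involution $\gamma\mapsto\inv\gamma$ given by running loops backwards, as in Example \ref{ex:bar}. Dunn's theorem then yields
\[ HD_*(S_*(\Moore X);\ring)\;\cong\; H_*\big((\loops B\Moore X)_{h\OMon};\ring\big), \]
where the $\OMon$-action on the right reverses the direction of loops and simultaneously twists by $B\inv{(\cdot)}\colon B\Moore X\to B\Moore X$.

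Next I would transport the right-hand side along $\xi$. For any space $Y$ the free loop space $\loops Y=\Map(S^1,Y)$ is canonically an $\OMon$-space by reparametrisation; if $Y$ additionally carries a $C_2$-action, there is a second $\OMon$-structure on $\loops Y$ in which the reflection first reverses a loop and then applies the $C_2$-action, and a $C_2$-equivariant map induces an $\OMon$-equivariant map between the resulting free loop spaces. Taking the trivial $C_2$-action on $X$, so that this second structure on $\loops X$ agrees with the canonical one, the map $\loops\xi\colon\loops B\Moore X\to\loops X$ is $\OMon$-equivariant (on $\loops B\Moore X$ we use the structure appearing in Dunn's theorem, since the $C_2$-action on $B\Moore X$ is exactly $B\inv{(\cdot)}$), and it is a weak equivalence because $\xi$ is and $\Map(S^1,-)$ preserves this weak equivalence. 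An $\OMon$-equivariant weak equivalence induces a homology isomorphism on Borel constructions, so $H_*((\loops B\Moore X)_{h\OMon};\ring)\cong H_*((\loops X)_{h\OMon};\ring)$; composing with the displayed isomorphism gives the corollary.

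The hard part, or rather the only part requiring care, is the equivariance bookkeeping in the second step: one must verify that loop reversal combined with a $C_2$-action on the target genuinely satisfies the relations of $\OMon=\TMon\rtimes C_2$ (in particular the conjugation relation on $\TMon$), and that $\loops\xi$ is equivariant for it in a way compatible with Dunn's conventions. This comes down to the $C_2$-equivariance of $\xi$ (with trivial action on $X$) together with the identity $\inv{\xi\circ\gamma}=\xi\circ\inv\gamma$, which merely records that reversing a loop is a reparametrisation. The remaining points — that $\Moore X$ meets Dunn's hypotheses and that $\Map(S^1,-)$ preserves the weak equivalence in question — are standard for connected LEC (in particular CW-type) spaces and I would treat them briefly.
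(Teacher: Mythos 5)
Your proposal is correct and follows essentially the same route as the paper: apply Dunn's theorem to $G=\Moore X$ and transport the result along the $C_2$-equivariant weak equivalence $\xi\colon B\Moore X\to X$ (with trivial action on $X$), which induces an $\OMon$-equivariant equivalence of free loop spaces and hence an isomorphism on Borel equivariant homology. The paper's proof is just a one-line version of this; your extra bookkeeping about the $\OMon=\TMon\rtimes C_2$ structure and the equivalence $\loops B\Moore X\simeq\loops X$ is exactly what is implicitly being used.
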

\begin{proof}This follows from the theorem above by inserting $G=\Moore X$ and using the equivalence $BG\xrightarrow{\simeq} X$.\end{proof}

\section{The cyclic bar construction and free loop spaces}\label{sec:cyc_bar_free_loops}
In this section we establish an equivalence of dihedral objects between the cyclic bar construction of the cochains (Example \ref{ex:cyclic_bar}) and the cochains of the cosimplicial model for free loop space (Example \ref{ex:circle}). This is done by extending the results of \cite{UngherettiCyclicBar} from an equivalence of (co)cyclic chain complex to an equivalence of (co)dihedral chain complexes.
\begin{Prop}\label{prop:bar_equivalence}
Let $X$ be a space with finite type homology over a principal ideal domain $\ring$. There is a natural zigzag of equivalences of \emph{dihedral} chain complexes \[B^{\cyc}_\bullet S^*(X;\ring) \xleftarrow{\simeq} QB^{\cyc}_\bullet S^*(X;\ring) \xrightarrow{\simeq} S^*(\Map (S^1_{\bullet}, X);\ring),\]
where $QB^{\cyc}_\bullet S^*(X;\ring)$ is a resolution of the cyclic bar construction.
\end{Prop}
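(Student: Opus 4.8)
The plan is to build on the known cyclic result from \cite{UngherettiCyclicBar}, which already supplies a natural zigzag $B^{\cyc}_\bullet S^*(X) \xleftarrow{\simeq} QB^{\cyc}_\bullet S^*(X) \xrightarrow{\simeq} S^*(\Map(S^1_\bullet, X))$ as \emph{cyclic} chain complexes, and to upgrade each object and each map in that zigzag to the dihedral setting. Concretely, both endpoints already carry dihedral structures: the left-hand side by Example \ref{ex:cyclic_bar} (using the involution on $S^*(X)$ from Example \ref{ex:singular_cochains_involution}) and the right-hand side by Example \ref{ex:circle} (functoriality of $S^*(-)$ applied to the dihedral set $S^1_\bullet$). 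So the content is (i) to equip the resolution $QB^{\cyc}_\bullet S^*(X)$ with a reflection operator $r_n$ making it dihedral, compatibly with the cyclic structure and the internal differential, and (ii) to check that the two comparison maps already constructed in \cite{UngherettiCyclicBar} intertwine the reflections, i.e., are maps of \emph{reflexive} chain complexes, which together with their being maps of cyclic chain complexes makes them maps of dihedral chain complexes.

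First I would recall the explicit construction of $QB^{\cyc}_\bullet S^*(X)$ from \cite{UngherettiCyclicBar}: it is obtained from $B^{\cyc}_\bullet$ of a cofibrant (or levelwise projective) replacement of the cochain algebra, or via an explicit bar-type resolution; the key point is that it is built functorially out of $S^*(X)$ as an involutive dga. Since the involution $\inv{(-)}$ on $S^*(X)$ is a map of dgas up to the sign conventions of Example \ref{ex:cyclic_bar}, and the resolution functor is constructed from the monoidal structure of $\Ch$ together with this algebra, the reflection map $r_n(a_0\otimes\cdots\otimes a_n)=(-1)^{\degree{a_n}(\degree{a_1}+\cdots+\degree{a_{n-1}})}\inv{a_0}\otimes\inv{a_n}\otimes\cdots\otimes\inv{a_1}$ on the cyclic bar construction lifts canonically through the resolution, because the resolution is defined by a construction that only uses the multiplication, the unit, and (in the dihedral case) the involution --- exactly as in Example \ref{ex:bar}. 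I would then verify the reflexive identities $r_n^2=\id$, $r_n t_n = t_n^{-1} r_n$, $d_i r_n = r_{n-1} d_{n-i}$, $s_i r_n = r_{n+1} s_{n-i}$ on $QB^{\cyc}_\bullet$; these are formal consequences of $\inv{\inv{a}}=a$, $\inv{1}=1$, $\inv{ab}=(-1)^{\degree{a}\degree{b}}\inv{b}\inv{a}$ and the sign bookkeeping, and are checked in exactly the same way as for $B^{\cyc}$ itself.

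Next I would show the two maps in the zigzag are reflexive. The right-hand map $QB^{\cyc}_\bullet S^*(X) \to S^*(\Map(S^1_\bullet, X))$ is, by construction in \cite{UngherettiCyclicBar}, assembled from iterated Alexander--Whitney / Eilenberg--Zilber maps and the evaluation/diagonal maps realizing $\Map(S^1_n, X) = X^{n+1}$. The reflection on the target is ``flip the $n+1$ coordinates'' (dual to $r_n(i)=n-i+1$ on $S^1_n$), which on cochains carries the sign $(-1)^{\degree{\gamma}(\degree{\gamma}+1)/2}$ of Example \ref{ex:singular_cochains_involution} on each simplicial factor. The point is that the Alexander--Whitney map is not cocommutative but the reflected composite differs from the unreflected one by a canonical chain homotopy coming from the reflexive structure of $\Sing_\bullet X$ (Example after Remark \ref{rm:reflexive_action}); however, since $QB^{\cyc}$ is a resolution, I expect that the comparison map can be \emph{chosen} to be strictly reflexive by an obstruction-theoretic lifting argument along the cofibrancy of $QB^{\cyc}_\bullet$ --- one builds the map skeletally, and at each stage the obstruction to equivariance lands in an acyclic complex. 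The left-hand map $QB^{\cyc}_\bullet S^*(X) \to B^{\cyc}_\bullet S^*(X)$ is the augmentation of the resolution and is tautologically a map of involutive objects, hence reflexive.

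The main obstacle I anticipate is precisely step (ii) for the right-hand map: controlling the interaction of the reflection with the Alexander--Whitney maps. Unlike the cyclic operator $\tau_n$, whose compatibility with AW is the whole technical heart of \cite{UngherettiCyclicBar}, the reflection reverses simplex orientation, and AW is manifestly not symmetric under reversal --- so strict reflexivity of the naive map will fail, and one must either rectify the map (using that the source is cofibrant as a reflexive, indeed dihedral, chain complex and the target is fibrant in the relevant sense) or enlarge the zigzag by one more intermediate term carrying a strictified reflection. I would handle this by the standard model-categorical argument: the category of reflexive (equivalently $C_2\wr$-simplicial) chain complexes has a projective-type model structure in which $QB^{\cyc}_\bullet$ is cofibrant once we resolve, the forgetful functor to cyclic chain complexes is a right Quillen functor, and a cyclic equivalence between a cofibrant and a fibrant dihedral object can be promoted to a dihedral equivalence. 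Everything else --- the sign verifications, the reflexive identities on the resolution, and the behaviour of the augmentation --- is routine once the framework is set up, so the write-up should foreground the AW-reflection compatibility and the lifting argument, and relegate the sign checks to remarks or to \cite{UngherettiCyclicBar}.
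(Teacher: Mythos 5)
You correctly locate the crux --- the reflection reverses simplex orientation, so the comparison map of \cite{UngherettiCyclicBar}, built from Alexander--Whitney-type operations, will not commute with $r_n$ on the nose --- but the step you propose to close this gap does not work as stated. The assertion that ``a cyclic equivalence between a cofibrant and a fibrant dihedral object can be promoted to a dihedral equivalence'' is not a formal consequence of any Quillen adjunction: two dihedral objects can be cyclically equivalent without any map intertwining the reflections, so the promotion requires knowing in advance that the given map is reflexive up to \emph{coherent} homotopy, and producing that coherence is precisely the content of the proposition. Your skeletal obstruction argument has the same problem in a different guise: you only exhibit (and only heuristically) a single chain homotopy between the reflected and unreflected composites, whereas the rectification needs the full tower of higher homotopies, and the target $S^*(\Map(S^1_\bullet,X))$ is not acyclic, so it is not clear in what complex your obstructions are supposed to vanish. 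Finally, the statement asserts a \emph{natural} zigzag; choices made degreewise by unstructured obstruction theory will not be natural in $X$.

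The paper closes exactly this gap by a different mechanism, operadic rather than model-categorical, and you would need something equivalent to it. The proof of the Main Theorem of \cite{UngherettiCyclicBar} is written so that the relevant identity ($\star$) takes place inside a contractible operad $\SeqOp$ of natural operations on cochains; the dihedral extension is obtained by enlarging that operad. Lemma \ref{lem:dga_exists} uses acyclic models to build a contractible quasi-free dga $R$ of \emph{natural} cochain operations containing a degree-zero element acting as the orientation-reversal involution, and Proposition \ref{prop:operad_exists} forms the pushout $\tilde{\SeqOp}$ of $\SeqOp \leftarrow \ring 1 \rightarrow R$, which is still contractible (Berger--Moerdijk) and still acts naturally on cochains. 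Since $r_n$ on $B^{\cyc}$ is a permutation of arguments composed with termwise involution, the action of $r_n$ is then itself an operadic operation, so the original argument of \cite{UngherettiCyclicBar} applies verbatim after checking its Lemma 1 for the morphisms $r_n$. This packages, once and for all and naturally in $X$, exactly the coherent homotopies your sketch postulates; without constructing such a system of natural operations (or an equivalent coherence device), your argument does not yet prove the proposition.
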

\begin{Rm}There is a more general statement when working with chains rather than cochains. See Remark 1 in \cite{UngherettiCyclicBar}.\end{Rm}
\begin{proof}[Proof of Proposition \ref{prop:bar_equivalence}] To extend the proof of the Main Theorem in \cite{UngherettiCyclicBar} two things need to be added. Lemma 1 on \cite{UngherettiCyclicBar} should be checked for the morphisms $r_n\in \dihedral^{\op}([n],[n])$, which is elementary. More importantly, one needs a contractible operad $\tilde{\SeqOp}$ with a natural action on cochains in such a way that it encodes the cup product in arity two, the involution $\inv{(-)}$ in arity one and insertion of the unit in arity zero. This ensures that the equation ($\star$) lives entirely inside the operad $\tilde{\SeqOp}$. In particular, the action of $r_n$ on the cyclic bar construction is a composition of a permutation of arguments (the operad is symmetric) and termwise application of the involution. The existence of an operad with such an action is established in Proposition \ref{prop:operad_exists} below.\end{proof}

\begin{Lem}\label{lem:dga_exists} There exists a quasi free unital differential graded algebra $(R,\delta)$ over $\ring$ and a natural differential graded $R\mhyphen$module structure on $S_*(X)$. The algebra $R$ contains a distinguished element $r$ of degree zero that acts on chains as $r \sigma = \inv{\sigma}$ where $\inv{\sigma}(t_0,\ldots,t_n)=(-1)^{n(n+1)/2}\sigma(t_n,\ldots, t_0)$.\end{Lem}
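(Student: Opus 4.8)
The plan is to build $R$ explicitly as a free (graded) algebra on a small generating set, with a differential making it acyclic in positive degrees and with a prescribed action on $S_*(X)$. First I would recall the classical fact that singular chains $S_*(X)$ carry an action of a contractible differential graded algebra encoding the $\cup_i$-products (the surjection operad, or the Barratt--Eccles operad, or simply the $E_\infty$-structure of \cite{Jones}), which in particular supplies, for each face operator and each composite $(d_0)^p$, explicit chain operations. The new ingredient here is the reversal operator. So I would let $R$ be the free unital dga generated by a single degree-zero generator $r$ together with whatever generators are needed to resolve the relations it must satisfy, and define $\delta$ so that $(R,\delta)$ is quasi-free and acyclic in positive degrees (e.g. adjoin a contracting generator of degree one whose boundary witnesses $r^2 = 1$, and more generally kill all homology above degree zero so that $H_*(R) = \ring$ in degree zero with $r$ mapping to $1$). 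The map $r\mapsto \inv{(-)}$, where $\inv{\sigma}(t_0,\ldots,t_n) = (-1)^{n(n+1)/2}\sigma(t_n,\ldots,t_0)$, must then be checked to be a chain map: this is the normalized-sign version of the reflection on $\Sing_\bullet X$ from Example \ref{ex:singular_cochains_involution}, and the sign $(-1)^{n(n+1)/2}$ is precisely what is needed to make reversal commute with the simplicial boundary, which is a short direct computation with the face identities $d_i r_n = r_{n-1} d_{n-i}$.

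The key steps, in order, are: (1) take the known contractible dga (operad) $\SeqOp$ acting on $S_*(X)$ that encodes cup products, face operators and the unit; (2) form the free product $R = \SeqOp * \ring\langle r\rangle$ of that dga with the free algebra on the degree-zero generator $r$; (3) extend the action to $R$ by sending $r$ to the reversal map with its canonical normalized sign, verifying it is a chain map and squares to the identity on the nose; (4) adjoin further generators (of degree one and higher) together with a differential that kills the positive homology of $R$, so that $R$ becomes quasi-free, unital, and acyclic in positive degrees — this can be done formally by a step-by-step attachment-of-cells argument or by taking a cofibrant replacement in the model category of dgas; (5) check naturality in $X$, which is automatic since every ingredient (singular chains, cup products, reversal) is natural.

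The main obstacle I expect is step (4) combined with keeping the module action natural: one must enlarge $\SeqOp * \ring\langle r\rangle$ to a genuinely quasi-free algebra — i.e.\ free as a graded algebra on some (possibly large) generating set, with a decomposable-plus-lower-filtration differential — while \emph{simultaneously} extending the $S_*(X)$-action compatibly and naturally in $X$. The clean way around this is to not resolve by hand but to invoke that the operad/dga already used in \cite{UngherettiCyclicBar} is itself quasi-free (or can be replaced by such) and that adjoining one polynomial generator $r$ to a quasi-free algebra, together with one acyclicity-generator for $r^2-1$, stays quasi-free; the action of the new generators on chains that are needed only to witness contractibility can be taken to be zero, which is trivially a chain map. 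In effect the only substantive verification is that reversal-with-signs is a chain operation commuting appropriately with the face and cup operations already present, and the existence of the ambient quasi-free algebra then follows from the corresponding statement for the cup-product operad, which is the content of Proposition \ref{prop:operad_exists} that follows.
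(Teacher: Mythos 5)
There is a genuine gap, and it sits exactly at the point your plan tries to shortcut. The $R$ of this lemma is not merely required to be quasi-free with $r^2=1$ witnessed: the way it is used (in Proposition \ref{prop:operad_exists}, to whose proof you appeal --- circularly, since that proposition is deduced from this lemma) requires $R$ to be contractible, i.e.\ $\ring\hookrightarrow R$ a quasi-isomorphism, so that $r$ becomes equal to $1$ in homology; you even state this target yourself ($H_*(R)=\ring$ with $r\mapsto 1$). Your construction does not achieve it: adjoining a degree-one generator with boundary $r^2-1$ leaves $H_0\cong\ring[r]/(r^2-1)$, not $\ring$. To make $r$ homologous to $1$ you need a generator $h$ with $\delta h=r-1$, and then your claim that the acyclicity generators ``can be taken to act by zero'' fails: the dg-module axiom forces $d(h\sigma)+h(d\sigma)=r\sigma-\sigma=\inv{\sigma}-\sigma$, which is not zero, so $h$ must act as a natural chain homotopy between reversal and the identity (the prism operator). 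Your $r^2-1$ generator can act by zero only because $r^2$ already acts as the identity on the nose, thanks to the sign $(-1)^{n(n+1)/2}$; that is precisely why it buys nothing.

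This is where the real content of the paper's proof lies: $R$ is taken to be free on $r$ together with a generator $h(a)$ for every word $a$, with $\delta h(r^p)=r^p-1$ and $\delta h(a)=a-h(\delta a)$, so that $a\mapsto h(a)$ is a contracting homotopy and $R\simeq\ring$; the natural action of all these generators on $S_*(X)$ is then produced by acyclic models. One only has to define $h(a)(\kappa_n)\in S_*(\ford^n)$ on the universal simplices (naturality then determines the operation everywhere), the prescribed boundary is checked to be a cycle, and contractibility of $\ford^n$ lets one choose the required chain, inductively in word length, operation degree and $n$; the base case for $h(r)$ is the prism operator. Your steps (1)--(2) of free-producting with $\SeqOp$ are also beside the point at this stage --- the paper keeps $\SeqOp$ and $R$ separate and only glues them as operads in Proposition \ref{prop:operad_exists} --- but that is cosmetic; the missing ingredient is the acyclic-models construction of the natural homotopy $r\simeq\id$ and its higher coherences, without which your $R$ is either not contractible or does not act.
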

\begin{proof} We define $R=\bigcup_l R^l$ where we construct the $R^l$ inductively, starting with $R^0$ freely generated by an element $r$ of degree zero. From $R^l$ we obtain $R^{l+1}$ by adding a generator $h(a)$ for every word $a\in R^l$, where $h(a)$ is one degree higher than $a$ is. For example, $r^2 h(r^{12}h(r)h(rh(r^5)))$ is an element of degree four in $R^3$. The differential is defined on generators as $\delta h(r^p)=r^p - 1$ and $\delta h(a)=a-h(\delta a)$ if the degree of $a$ is not zero. The map $a\mapsto h(a)$ defines a contracting homotopy.

It now remains to show that we can define the natural module structure on $S_*(X)$. This is done by the method of acyclic models and induction on $l$, the degree of the operation and the degree of the chain on which the operations act. Note that if $a$ is any natural operation, it is determined by its action on universal simplices $\kappa_n\in S_n(\ford^n)$ for all $n$ as $a(\sigma)=a(\sigma_* \kappa_n)=\sigma_*(\kappa_n)$ for $\sigma\in S_n(X)$. Also, $a(d\sigma)=\sigma_* a(d\kappa_n)=\sigma_*\Sigma_{i=0}^{n} \delta^i_*a(\kappa_{n-1})$ where the maps $\delta^i\colon \ford^{n-1}\rightarrow \ford^n$ are the face inclusions. 

Because $R^0$ is freely generated by $r$, its action on chains is determined by the formula $r(\sigma)=\inv{\sigma}$. Finding the action of the $h(r^p)\in R^1$ reduces to fixing the action of a single $h(r)$ because $r^2(\sigma)=\sigma$. This operation can either be found using acyclic methods or can be found as the prism operator in the proof of homotopy commutativity of the cup product (see \cite[p.211]{HatcherAT}). We proceed with the inductive step. 

Assume we have specified the action of all generators in $R^{l-1}$ and all new generators (elements of the form $h(a)$ for $a\in R^{l-1}$) in $R^l$ of degree $m$. Let $a$ be a word of degree $m$ in $R^{l-1}$ that is not in $R^{l-2}$. We need to show the existence of a natural operation associated to $h(a)$ that satisfies $(\delta h(a))(\sigma)= a(\sigma) - h(\delta a)(\sigma)$. Here the first $\delta$ should be read as the differential as an operation: $(\delta h(a))(\sigma)=d h(a)(\sigma)- (-1)^{m+1} h(a)(d\sigma)$. Using this, we see that it suffices to specify $h(a)(\kappa_n)\in S_{n+m+1}(\ford^n)$ for all $n$, such that
\begin{equation}\label{eqn:boundary_condition}
d h(a)(\kappa_n)=a(\kappa_n) - h(\delta a)(\kappa_n)+ (-1)^{m+1} \Sigma_{i=0}^n (-1)^i \delta_*^i h(a)(\kappa_{n-1}).
\end{equation}
We can define such $h(a)(\kappa_n)$ by induction on $n$: If we assume to have found such $h(a)(\kappa_N)$ for $N < n$, all the terms of the right hand side are elements of $S_{n+m}(\ford^n)$ that have been found. A small calculation shows that the right hand side is a cycle and as $\ford^n$ is contractible, we see that $h(a)(\kappa_n)$ exists. For the base of this part of the induction one needs to find a $h(a)(\kappa_0)\in S_{m+1}(\ford^n)$ whose boundary is $a(\kappa_0)-h(\delta a)(\kappa_0)$, which is again possible because $\ford^n$ is contractible.
\end{proof}

\begin{Prop}\label{prop:operad_exists} There exists a symmetric, reduced, differential graded operad $\tilde{\SeqOp}$ with a natural action on cochains. The operad contains distinguished elements in arity two, one and zero representing the cup product, the involution and insertion of the unit respectively.\end{Prop}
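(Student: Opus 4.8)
The plan is to carry out, at the level of operads, the construction of the differential graded algebra $R$ performed in Lemma~\ref{lem:dga_exists}; indeed that lemma is precisely the arity-one shadow of the statement here. First I would fix a base operad $\SeqOp$: a symmetric, reduced, contractible differential graded operad acting naturally on singular cochains, containing a distinguished binary element $\mu$ acting as the cup product and a distinguished nullary element $u$ acting as the unit $1 \in S^0(X)$. Such an operad already occurs in \cite{UngherettiCyclicBar}, where it is used to treat the cyclic case; alternatively it can be produced from scratch by the method of acyclic models, exactly as in the inductive part of the proof of Lemma~\ref{lem:dga_exists}.

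Next I would adjoin the involution and resolve. Freely adjoin to $\SeqOp$ a unary generator $r$ of degree zero, and then, following the recipe of Lemma~\ref{lem:dga_exists} word for word, quasi-freely adjoin a generator $h(a)$ of degree $\degree{a}+1$ for every operad word $a$ produced so far, with differential $\delta h(a) = a - h(\delta a)$ when $\degree{a} > 0$ and $\delta h(a) = a - \overline{a}$ when $\degree{a} = 0$, where $\overline{a}$ is the word obtained by replacing every occurrence of $r$ by the operadic identity. As in the lemma, the assignment $a \mapsto h(a)$ is then a contracting homotopy exhibiting a quasi-isomorphism $\tilde{\SeqOp} \simeq \SeqOp$; in particular $\tilde{\SeqOp}$ is contractible, it is symmetric (extend the $\Sigma_n$-action by $h(a)^{\sigma} = h(a^{\sigma})$), it is reduced, and by construction it contains the required distinguished elements $\mu$, $r$, $u$ in arities two, one and zero. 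Note that $\overline{r} = \mathrm{id}$, so that in $\tilde{\SeqOp}$ the involution generator is homotopic to the identity — the operadic incarnation of the homotopy-triviality of the involution on $S^*(X)$.

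Then I would construct the action on cochains. Extend the $\SeqOp$-action on $S^*(X)$ to a natural action of $\tilde{\SeqOp}$ by letting $r$ act as the honest, simplexwise flip involution $\overline{(-)}$ of Example~\ref{ex:singular_cochains_involution}, and by defining the action of each new generator $h(a)$ via acyclic models, through a triple induction on the filtration level, the internal degree, and the arity — precisely as in the inductive step of Lemma~\ref{lem:dga_exists}. The mechanism is the same: a natural arity-$n$ operation on cochains is determined by its values on tuples of universal simplices, the prescribed boundary of $h(a)$ applied to such a tuple is a cycle in a complex assembled from copies of $(\ford^{\bullet})^{\times n}$, and contractibility of that complex supplies the required nullhomotopy; consistency of the degree-zero relations $\delta h(a) = a - \overline{a}$ holds because the strict flip is chain homotopic to the identity on $S^*(X)$ (the homotopy associated to $h(r)$ being a prism operator, cf.\ \cite[p.~211]{HatcherAT}), and because the flip anti-commutes strictly with the cup product and fixes the unit.

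I expect the main obstacle to be the symmetric-group bookkeeping. Beyond the arity-one situation of Lemma~\ref{lem:dga_exists}, the quasi-free generators $h(a)$ and the acyclic-models choices of the last step must be organized $\Sigma_n$-equivariantly, so that the induction is run over $\Sigma_n$-orbits of operadic words against an equivariant family of models; this is the standard, somewhat tedious difficulty in exhibiting any $E_\infty$-type operad acting on cochains. The one genuinely new ingredient is that the relation $r \circ_1 \mu \simeq (\mu \circ (r \otimes r)) \cdot \tau$ governing the interaction of the involution with the cup product has to be compatible with the Koszul signs, which is precisely what Proposition~\ref{prop:sing_inv} — and the sign $(-1)^{\degree{\gamma}(\degree{\gamma}+1)/2}$ in the definition of the involution on $S^*(X)$ — is there to ensure.
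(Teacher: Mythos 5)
Your overall strategy---quasi-freely adjoin an involution generator and homotopies on top of $\SeqOp$ and then extend the cochain action by acyclic models---is not the paper's argument, and as written it has a genuine gap at exactly the point you flag and then set aside. Once you adjoin generators $h(a)$ for arbitrary operadic words $a$ (which now mix $r$ with elements of $\SeqOp$ in all arities), two problems appear. First, $\SeqOp$ is not free, so ``$h(a)$ for every word $a$'' is not well defined on the operad: two words equal in $\SeqOp$ give distinct generators, and your claimed contracting homotopy $a\mapsto h(a)$ does not descend without choosing bases or a genuinely relative (bar-type) resolution. Second, and more seriously, the action of the new higher-arity generators on cochains must be \emph{$\Sigma_n$-equivariant and natural}, and generators $h(a)$ can have nontrivial isotropy; equivariant acyclic models then require acyclicity of the relevant fixed-point/coinvariant complexes, which is not automatic over $\Z$ or $\F_2$. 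This is the hard part of producing any $E_\infty$-type operad action, and your proposal names it as ``the main obstacle'' without resolving it, so the proof is incomplete precisely where it is nontrivial.

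The paper avoids all of this by never resolving anything over $\SeqOp$. Lemma \ref{lem:dga_exists} is not just the ``arity-one shadow'' of the statement: the quasi-free contractible dga $R$ it produces, viewed as an operad concentrated in arity one, is literally one of the two factors, and $\tilde{\SeqOp}$ is defined as the pushout $\SeqOp \leftarrow \ring 1 \rightarrow R$ over the initial reduced operad. Since $R$ is quasi-free and contractible, $\ring 1\hookrightarrow R$ is an acyclic cofibration in the Berger--Moerdijk model structure, so $\SeqOp\xrightarrow{\simeq}\tilde{\SeqOp}$; and since an algebra over such a coproduct is just a pair of algebra structures agreeing in arity zero, the natural action on cochains is immediate from the already-known actions of $\SeqOp$ and of $R$ (the latter supplied by Lemma \ref{lem:dga_exists}). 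In particular all new generators are unary, so no symmetric-group bookkeeping or multi-ary acyclic models ever arise, and---contrary to your last paragraph---no relation such as $r\circ_1\mu\simeq(\mu\circ(r\otimes r))\cdot\tau$ is imposed in $\tilde{\SeqOp}$ at all: contractibility of the operad is all that Proposition \ref{prop:bar_equivalence} needs, the sign discussion of Proposition \ref{prop:sing_inv} being a statement about the action on $S^*(X)$, not about the operad. If you want to salvage your route you must either carry out the equivariant acyclic-models argument honestly or, better, reorganize the construction so that the adjoined structure is purely unary---at which point you have rediscovered the paper's coproduct argument.
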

\begin{proof}
The operad $\tilde{\SeqOp}$ is defined as the pushout of $\SeqOp \leftarrow \ring 1 \rightarrow R$. Here $\ring 1$ is the initial reduced operad and $R$ is the operad associated to the differential graded algebra of Lemma \ref{lem:dga_exists}. As $R$ is quasi free and contractible, $\ring 1 \hookrightarrow R$ is an acyclic cofibration in the Berger--Moerdijk model structure. Therefore $\SeqOp \xrightarrow{\simeq} \tilde{\SeqOp}$ is a weak equivalence. An algebra structure on a chain complex for such a coproduct operad is a pair of algebra structures that agree in arity zero. As we know that there are natural action of both $\SeqOp$ and $R$ on singular cochains and that they both insert the unit as the arity zero operation, we see that singular cochains carry a natural algebra structure over $\tilde{\SeqOp}$.

The distinguished elements in arity two and arity zero are provided by the image of $\SeqOp \rightarrow \tilde{\SeqOp}$. The involution is provided by the action of the generator $r\in R$.
\end{proof}

\section{Comparing homotopy orbits}\label{sec:compare_orbits}
An often used fact is that $X_{hG}\simeq (X_{hN})_{hG/N}$ for $X$ a $G$-space and $N\triangleleft G$. This is a consequence of the fact that any model for $EG$ is also a model for $EN$. We use a variation of this fact to see that $X_{h\OMon}\cong (X_{h\TMon})_{hC_2}$ in a way that is compatible with the algebraic statement of Proposition \ref{prop:alg_orbits}. 

\begin{Def}The two sided bar construction model for $E\TMon$ comes with a simplicial right $\OMon$-action that extends the $\TMon$-action: On $B_n(*,\TMon,\TMon)$ the action of $\OMon = \TMon\rtimes C_2$ is defined as $(z_1,\ldots,z_n)z_0 \cdot (z,\alpha)=(z_1^\alpha,\ldots, z_n^\alpha)z_0^\alpha z^\alpha$.
\end{Def}

\begin{Prop}\label{cor:homotopyorbits}Let $X$ be a left $\OMon\mhyphen$space. Then there is an equivalence $X_{h\OMon}\simeq ( X_{h\TMon})_{hC_2}$.\end{Prop}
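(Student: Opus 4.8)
The plan is to deduce this from the standard principle that any model for $E\OMon$ restricts to a model for $E\TMon$, while being careful to work with the explicit simplicial models used elsewhere in the paper so that the resulting $C_2$-action on $X_{h\TMon}$ is the one that appears in Proposition \ref{prop:alg_orbits}. The model I would use for $E\OMon$ is the product $E\TMon \times EC_2$, where $E\TMon$ is the geometric realization of $B_\bullet(*,\TMon,\TMon)$ with the right $\OMon$-action of the Definition above, $EC_2$ is the standard free contractible $C_2$-space, and $\OMon = \TMon\rtimes C_2$ acts on $EC_2$ through the projection onto $C_2$.

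First I would check that $E\TMon\times EC_2$ is a valid model for $E\OMon$: it is contractible since each factor is, and the $\OMon$-action is free because any element $(z,\alpha)$ fixing a point must act trivially on the $EC_2$ coordinate, forcing $\alpha=1$, and then acts on $E\TMon$ through the $\alpha=1$ part of its action, which is free right multiplication on the last bar coordinate (the bar construction being the usual model for $E\TMon$). It is worth noting that $E\TMon$ alone is \emph{not} a model for $E\OMon$, since the $C_2$-part of its action inverts all $\TMon$-coordinates and hence has fixed points; this is exactly why the extra $EC_2$ factor is needed, matching the algebraic normalization $E\OMon_*=E\TMon_*\otimes (EC_2)_*$.

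Then I would compute $X_{h\OMon}=(E\TMon\times EC_2\times X)/\OMon$ by quotienting in two stages along the normal subgroup $\TMon\triangleleft\OMon$. Since $\TMon$ acts trivially on the $EC_2$ factor, the quotient by $\TMon$ is $(E\TMon\times_\TMon X)\times EC_2=X_{h\TMon}\times EC_2$, where $X_{h\TMon}$ is formed using exactly the model $E\TMon=|B_\bullet(*,\TMon,\TMon)|$ of the rest of the paper; quotienting the result by the residual $C_2=\OMon/\TMon$ then yields $EC_2\times_{C_2}X_{h\TMon}=(X_{h\TMon})_{hC_2}$, and every step is natural in $X$, indeed producing a natural homeomorphism. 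The only piece of genuine bookkeeping — and the step I expect to be the main (if mild) obstacle — is to identify this residual $C_2$-action: tracing the formula $(z_1,\dots,z_n)z_0\cdot(1,-1)=(z_1^{-1},\dots,z_n^{-1})z_0^{-1}$ through the first quotient shows that the generator of $C_2$ acts on $X_{h\TMon}$ by $[e,x]\mapsto[\bar e, gx]$, where $\bar e$ inverts all $\TMon$-coordinates of $e$ and $g\cdot(-)$ is the restricted $C_2$-action on $X$. This is precisely the $C_2$-action under which $(X_{h\TMon})_{hC_2}$ is intended, so the decomposition is compatible with the algebraic identity of Proposition \ref{prop:alg_orbits}, to which it reduces after applying singular chains together with the Eilenberg--Zilber equivalence $S_*(E\TMon\times EC_2)\simeq S_*(E\TMon)\otimes S_*(EC_2)$.
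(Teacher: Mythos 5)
Your proposal is correct and follows essentially the same route as the paper: both take $E\TMon\times EC_2$ (bar-construction $E\TMon$ with its extended $\OMon$-action, $EC_2$ acted on through the projection to $C_2$) as a model for $E\OMon$ and then quotient in two stages along $\TMon\triangleleft\OMon$. Your extra bookkeeping — checking freeness and tracing the residual $C_2$-action on $X_{h\TMon}$ — is just a more explicit version of what the paper asserts in one line.
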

\begin{proof}Let $EC_2$ be any contractible space with a free $C_2$-action. Then $C_2$ acts diagonally on $EC_2\times E\TMon$, whereas $\TMon$ acts only on $E\TMon$. Together this gives a free $\OMon$-action and thus a model for $E\OMon$. Using the fact that $(-)_\OMon=((-)_\TMon)_{C_2}$, we now see that $X_{h\OMon}=(EC_2\times E\TMon)\times_\OMon X = EC_2\times_{C_2}(E\TMon \times_\TMon X)=(X_{h\TMon})_{hC_2}$.\end{proof}

\begin{Prop}\label{prop:compare_orbits}Let $X$ be a left $\OMon\mhyphen$space. Then there are equivalences $S_*(X_{h\TMon})\simeq (S_*(X))_{h\TMon}$ and $S_*(X_{h\OMon})\simeq (S_*(X))_{h\OMon}$.\end{Prop}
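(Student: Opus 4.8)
The plan is to deduce both equivalences from the two-sided bar construction: first the $\TMon$-case, which is essentially Jones' argument run equivariantly; then a $C_2$-analogue by hand; and finally the $\OMon$-case by combining them through the splitting $\OMon=\TMon\rtimes C_2$ already packaged in Propositions \ref{cor:homotopyorbits} and \ref{prop:alg_orbits}. For the $\TMon$-statement I would model $X_{h\TMon}$ as the realization of the two-sided bar construction $B_\bullet(*,\TMon,X)$, whose $n$-th space is $\TMon^{\times n}\times X$. Since $\TMon$ is well-pointed this simplicial space is proper, so applying normalized singular chains levelwise and totalizing gives a natural quasi-isomorphism $S_*(X_{h\TMon})\simeq\Tot_\oplus S_*\big(B_\bullet(*,\TMon,X)\big)$. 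The Eilenberg--Zilber maps are natural, hence compatible with all the simplicial structure maps, and identify the right-hand side up to quasi-isomorphism with $\Tot_\oplus B_\bullet(\ring,S_*\TMon,S_*X)$, that is with $\ring\otimes^{\mathbf{L}}_{S_*\TMon}S_*X$. A standard comparison, carried out in \cite{Jones}, then identifies this with $(S_*X)_{h\TMon}=E\TMon_*\otimes_\TMon S_*X$; the key inputs are the formality of $S_*\TMon$, which lets one replace it by the strictly-acting $\TMon=H_*(\TMon)$ compatibly with the module structure on $S_*X$, and the fact that $E\TMon_*$ is a free resolution of $\ring$ over $\TMon$. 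Crucially each step is functorial in $X$ together with the simplicial $C_2$-action that reverses the circle (acting by $z\mapsto z^{-1}$ on each $\TMon$ factor and by the given involution on $X$), and the formality equivalence for $S_*\TMon$ may be chosen $C_2$-equivariantly, since $C_2$ acts by $B\mapsto -B$ on both $S_*\TMon$ and $H_*(\TMon)$; hence when $X$ is an $\OMon$-space the resulting zigzag $S_*(X_{h\TMon})\simeq(S_*X)_{h\TMon}$ consists of maps of $\OMon$-modules.

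For a left $C_2$-space $Y$ I would prove the analogue $S_*(Y_{hC_2})\simeq(S_*Y)_{hC_2}$ directly, with no Eilenberg--Zilber needed. Writing $Y_{hC_2}=|B_\bullet(*,C_2,Y)|$ with $n$-th space $C_2^{\times n}\times Y$, the factors $C_2^{\times n}$ are discrete, so normalized singular chains give an honest isomorphism $S_*(C_2^{\times n}\times Y)\cong(\ring C_2)^{\otimes n}\otimes S_*Y$ of chain complexes, natural in the bar structure. This simplicial space is again proper, so totalizing gives $S_*(Y_{hC_2})\simeq\big(\Tot_\oplus B_\bullet(\ring,\ring C_2,\ring C_2)\big)\otimes_{C_2}S_*Y$. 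Since $\Tot_\oplus B_\bullet(\ring,\ring C_2,\ring C_2)$ and the periodic resolution $EC_2=\ring C_2\otimes\ring[v^{-1}]$ are both free resolutions of $\ring$ as $\ring C_2$-modules, they are chain homotopy equivalent over $\ring C_2$, so tensoring either with $S_*Y$ over $C_2$ gives quasi-isomorphic complexes, and the second is $(S_*Y)_{hC_2}$ by definition.

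To assemble these for a left $\OMon$-space $X$: Proposition \ref{cor:homotopyorbits} provides a weak equivalence $X_{h\OMon}\simeq(X_{h\TMon})_{hC_2}$, and $S_*$ sends weak equivalences to quasi-isomorphisms, so $S_*(X_{h\OMon})\simeq S_*\big((X_{h\TMon})_{hC_2}\big)$. Applying the $C_2$-statement with $Y=X_{h\TMon}$ gives $\simeq\big(S_*(X_{h\TMon})\big)_{hC_2}$. Feeding the $\OMon$-equivariant form of the $\TMon$-statement into the $(-)_{hC_2}$ half of Proposition \ref{prop:orbit_equivalence} (whose double complex argument applies verbatim to $C_2$) gives $\simeq\big((S_*X)_{h\TMon}\big)_{hC_2}$, and Proposition \ref{prop:alg_orbits} identifies this last complex with $(S_*X)_{h\OMon}$. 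Concatenating the zigzags proves both equivalences at once; the $\TMon$-statement is of course the first clause above.

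The main obstacle I anticipate is not any single piece of homological algebra but the equivariance bookkeeping in the $\TMon$-case: one must arrange the comparison of \cite{Jones} so that the formality equivalence for $S_*\TMon$, the replacement of $E\TMon_*$ by a bar resolution, and the (already natural) Eilenberg--Zilber maps are all simultaneously compatible with the circle-reversing involution, so that the output genuinely lies among $\OMon$-modules and can be fed to $(-)_{hC_2}$. The only other technical input --- properness of the bar constructions, needed to commute singular chains past geometric realization --- is routine, since $\TMon$ and $C_2$ are well-pointed.
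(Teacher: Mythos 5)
Your overall architecture (treat the $\TMon$-case first via bar constructions and Eilenberg--Zilber, then pass to $\OMon$ through $X_{h\OMon}\simeq (X_{h\TMon})_{hC_2}$ and $(S_*X)_{h\OMon}=((S_*X)_{h\TMon})_{hC_2}$) is exactly the paper's, but the one step you dispose of in a clause --- ``the formality equivalence for $S_*\TMon$ may be chosen $C_2$-equivariantly, since $C_2$ acts by $B\mapsto -B$ on both $S_*\TMon$ and $H_*(\TMon)$'' --- is precisely the crux, and your justification does not establish it. Agreement of the actions on homology says nothing about the existence of a strictly equivariant chain-level comparison: the natural map $\TMon=H_*(\TMon)\hookrightarrow S_*(\TMon)$, $B\mapsto[\TMon]$, which is what drives the identification of $B_\bullet(\ring,S_*\TMon,S_*X)$ with $B_\bullet(\ring,\TMon,S_*X)$, is \emph{not} $C_2$-equivariant, because the involution sends $[\TMon]$ to $[-\TMon]$, which is homologous to, but not equal to, $-[\TMon]$. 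This is exactly the failure the paper isolates, and its proof is devoted to repairing it: the two cycles differ by the boundary of an explicit $2$-chain $P$, which seeds (by acyclic methods) a homotopy-coherent $C_2$-equivariant map $B_*(C_2,C_2,B_*(\ring,\TMon,S_*X))\rightarrow B_*(\ring,S_*\TMon,S_*X)$, built by tensor powers, Eilenberg--Zilber and the multiplication $C_2^n\to C_2$; only after this replacement can one take $(-)_{hC_2}$ and conclude. Your assembly step feeding the $\TMon$-comparison into $(-)_{hC_2}$ therefore has no input as written, since you have not produced an equivariant (or coherently equivariant) zigzag.

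The gap is not unfixable along your lines, but it requires actual work that the proposal defers: one could try to choose a fundamental $1$-cycle $c$ with $r_*c=-c$ on the nose (e.g.\ the difference of the two half-circle simplices from $1$ to $-1$), but you would then still have to check that Jones' comparison runs with this choice --- in particular that the induced map from $H_*(\TMon)$ is multiplicative (or multiplicative up to coherent homotopy) and compatible with the $\TMon$-module structure $B(\sigma)=\mu_*([\TMon]\times\sigma)$ on $S_*X$, equivariantly and over an arbitrary ring $\ring$ (you cannot average, since $2$ need not be invertible; the $\F_2$ case is one the paper cares about). None of this verification appears in the proposal; you even flag the ``equivariance bookkeeping'' as the main obstacle and then treat it as routine. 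The $C_2$-analogue $S_*(Y_{hC_2})\simeq(S_*Y)_{hC_2}$ and the final concatenation are fine, but as it stands the argument has a genuine hole at the equivariance of the $\TMon$-comparison, which is the substance of the paper's proof.
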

\begin{proof}On both sides, the homotopy $\TMon\mhyphen$orbits can be described by a two sided bar construction. Combining the Eilenberg--Zilber equivalence with the map $\TMon=H_*(\TMon)\xhookrightarrow{\simeq} S_*(\TMon)$, we obtain an equivalence of simplicial chain complexes \[B_\bullet(\ring,\TMon,S_*X)\xrightarrow{\simeq}B_\bullet(\ring,S_*\TMon,S_*\TMon)\xrightarrow{\simeq}S_*(B_\bullet(*,\TMon,X)).\] Passing to the total complexes we obtain a quasi isomorphism
\[(S_*(X))_{h\TMon}\xrightarrow{\simeq}S_*(X_{h\TMon}).\]
Although this map is not $C_2$-equivariant on the nose, it is equivariant up to coherent homotopy, which is enough in order to compare homotopy orbits. More concretely, there exists a $C_2$-equivariant map \[B_*(C_2,C_2, B_*(\ring, \TMon, S_*(X)))\rightarrow B_*(\ring, S_*(\TMon), S_*(X)),\] and it is clear that this map induces an equivalence on homotopy orbits as claimed. The existence of the homotopy coherent map can be shown using acyclic methods.

The map $\TMon \hookrightarrow S_*(\TMon)$ does not commute with the $C_2$-action as $-[\TMon]$ and $[-\TMon]$ do not coincide in $S_*(X)$. Here $[-\TMon]$ is the fundamental cycle with the opposite orientation, given by $[-\TMon](t_0,t_1)=e^{- 2\pi i t_0}=e^{2\pi i t_1}$ as opposed to $[\TMon](t_0,t_1)=e^{2 \pi i t_0}$. The two are however homologous cycles, with the difference given as the boundary of the two-chain $P(t_0,t_1,t_2)=e^{2\pi i t_1}$ in the normalized complex. This gives the zero'th level of a $C_2$-equivariant map $B_*(C_2,C_2,\TMon)\xrightarrow{\simeq} S_*(\TMon),$ which exists by an acyclic methods argument. Taking tensor powers we obtain a sequence of maps
\[B_*(C_2,C_2,\TMon)^{\otimes n}\otimes S_*(X)\rightarrow (S_*(\TMon))^{\otimes n} \otimes S_*(X).\]
Using the Eilenberg--Zilber equivalence and the multiplication map $C_2^n\rightarrow C_2$ we get
\[B_*(C_2,C_2,\TMon)^{\otimes n}\xrightarrow{\simeq} B_*(C_2^n,C_2^n,\TMon^{\otimes n})\xrightarrow{\simeq}B_*(C_2, C_2, \TMon^{\otimes n}).\]
Combining these maps for all $n$ we get a map
\[B_*(C_2,C_2, B_*(\ring, \TMon, S_*(X)))\rightarrow B_*(\ring, S_*(\TMon),S_*(X)).\]
By taking $C_2$ homotopy orbits, we get the desired equivalence
\[(S_*(X))_{h\OMon}\simeq ((S_*(X))_{h\TMon})_{hC_2}\simeq S_*(X_{h\OMon}).\]\end{proof}

\section{Comparing totalisations}\label{sec:compare_tot}
Let $Y^\bullet$ be a codihedral space, for example $\Map(S^1_\bullet, X)$ for a space $X$. Associated to $Y^\bullet$ are two chain complexes and a natural map between them $\psi\colon S_*\tot Y^\bullet \rightarrow \Tot_\Pi S_*Y^\bullet$. This map is defined as $\psi(\sigma)=\Pi_n (\alpha_n)_*(\sigma \times \kappa_n)$, where $\kappa_n\in S_n(\ford^n)$ is the fundamental simplex and $\alpha_n \colon (\tot Y^\bullet) \times \ford^n \rightarrow Y^n$ is the evaluation map coming from the definition of totalization as the end construction $\tot Y^\bullet=\Nat_{\ford}(\stdSim^\bullet,Y^\bullet)$. Although both sides are $\OMon\mhyphen$modules, $\psi$ is only almost an $\OMon\mhyphen$map. To fix this, we now introduce a slightly different model for the right hand side. Let $\tilde{S}_n(X)$ denote the \emph{oriented singular $n\mhyphen$chains}, defined by quotienting $S_n(X)$ by the relation $g\cdot \sigma \sim \textit{sgn}(g) \sigma$ where $\textit{sgn}(g)$ is the sign of a permutation $g\in \Sigma_{n+1}$ that acts by permuting the coordinates simplices. This defines a functor $\tilde{S}\colon \Top \rightarrow \Ch$ that is naturally quasi isomorphic to the usual singular chains. For more details on $\tilde{S}$, see \cite{Barr}.

\begin{Prop}\label{prop:compare_tot}If $\psi$ is a quasi isomorphism, then $S_*\tot Y^\bullet$ and $\Tot_\Pi S_*Y^\bullet$ are quasi isomorphic as $\OMon\mhyphen$modules and \[(S_*\tot Y^\bullet)_{h\OMon} \simeq (\Tot_\Pi S_*Y^\bullet)_{h\OMon}.\]\end{Prop}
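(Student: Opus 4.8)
The plan is to replace the non-equivariant map $\psi$ by a genuinely $\OMon$-equivariant one using the oriented chains $\tilde S$, and then to transport the hypothesis that $\psi$ is a quasi-isomorphism to that model and invoke Proposition \ref{prop:orbit_equivalence}. First I would record that the projection $q\colon S_*(-)\twoheadrightarrow \tilde S_*(-)$ is a natural quasi-isomorphism of $\OMon$-modules (the $\OMon$-module structure on $\Tot_\Pi \tilde S_* Y^\bullet$ being the one induced from the dihedral structure on $Y^\bullet$ exactly as in Example \ref{ex:total_dihedral}); this is the content of \cite{Barr} together with functoriality. Composing, $\tilde\psi \coloneqq (\Tot_\Pi q)\circ \psi \colon S_*\tot Y^\bullet \to \Tot_\Pi \tilde S_* Y^\bullet$ is a quasi-isomorphism whenever $\psi$ is, since $q$ is.

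The key step is to check that $\tilde\psi$ \emph{is} $\OMon$-equivariant, whereas $\psi$ was only ``almost'' so. Unwinding the definitions, $\psi(\sigma)=\Pi_n (\alpha_n)_*(\sigma\times \kappa_n)$, and the failure of equivariance is measured by comparing the evaluation map $\alpha_n$ precomposed with the action of $\OMon$ on $\tot Y^\bullet$ (which comes from acting on $\stdDihedral^\bullet\cong\OMon\times\ford^\bullet$, see the Proposition on realizations) against the dihedral structure maps of $Y^\bullet$ applied after $\alpha_n$. The continuous part $\TMon$ already matches on the nose — this is the cyclic computation of Jones, \cite[\S4]{Jones}. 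For the reflection $R\in C_2$, acting on the left entry of $\stdDihedral^n$ by $\rho_n$ sends $\kappa_n$ to (a reparametrization of) the flipped fundamental simplex; under $(\alpha_n)_*$ this reflection reorders the vertices of $\kappa_n$ by the order-reversing permutation of $\{0,\dots,n\}$, i.e. introduces exactly the sign $(-1)^{n(n+1)/2}$, which is precisely the sign built into $R_n=(-1)^{n(n+1)/2}r_n$ in Example \ref{ex:total_dihedral} and into the involution on $S^*(X)$ in Example \ref{ex:singular_cochains_involution}. Passing to $\tilde S$ quotients out exactly by these permutation signs, so the reordering becomes invisible and the square commutes strictly. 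I would make this precise by a direct diagram chase on the universal simplices, using that a natural transformation out of $S_*$ is determined by its value on $\kappa_n\in S_n(\ford^n)$ (the acyclic-models bookkeeping already used in Lemma \ref{lem:dga_exists}).

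Once $\tilde\psi$ is an $\OMon$-equivariant quasi-isomorphism, Proposition \ref{prop:orbit_equivalence} gives $(S_*\tot Y^\bullet)_{h\OMon}\simeq (\Tot_\Pi \tilde S_* Y^\bullet)_{h\OMon}$, and the $\OMon$-equivariant quasi-isomorphism $\Tot_\Pi q\colon \Tot_\Pi S_* Y^\bullet \xrightarrow{\simeq} \Tot_\Pi \tilde S_* Y^\bullet$, again via Proposition \ref{prop:orbit_equivalence}, identifies the right-hand side with $(\Tot_\Pi S_* Y^\bullet)_{h\OMon}$, yielding the claim. (One should note $\Tot_\Pi q$ is a quasi-isomorphism: it is a levelwise quasi-isomorphism of cosimplicial chain complexes, hence induces a quasi-isomorphism on product totalizations by the standard spectral-sequence-of-a-double-complex argument, the same one cited for Proposition \ref{prop:orbit_equivalence}.) The main obstacle I expect is purely bookkeeping: getting the signs in the reflection computation to line up, i.e. verifying that the $(-1)^{n(n+1)/2}$ coming from reversing the vertex order of $\kappa_n$ is exactly the discrepancy between $\psi$ and an equivariant map and is exactly what $\tilde S$ kills — everything else is formal nonsense built on results already in hand.
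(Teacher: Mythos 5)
Your strategy is the paper's own: compose $\psi$ with the projection to Barr's oriented chains, show that the composite $\tilde{\psi}\colon S_*\tot Y^\bullet \to \Tot_\Pi \tilde{S}_* Y^\bullet$ is an honest map of $\OMon$-modules, and then conclude via Proposition \ref{prop:orbit_equivalence} (together with the equivariant levelwise quasi-isomorphism $\Tot_\Pi S_*Y^\bullet \to \Tot_\Pi \tilde{S}_*Y^\bullet$). Your treatment of the reflection is also the paper's computation: the discrepancy between $(\alpha_n)_*(R\sigma\times\kappa_n)$ and $R(\alpha_n)_*(\sigma\times\kappa_n)$ is an order-reversing permutation of vertices versus the sign $(-1)^{n(n+1)/2}$, and passing to $\tilde{S}_*$ identifies the two. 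The one genuine inaccuracy is your claim that the continuous part ``already matches on the nose'' by citing \cite[\S4]{Jones}: the paper asserts the opposite, namely that the compatibility with the $B$-operator (comparing $\mu_*([\TMon]\times\iota_{n+1})$ with $B\iota_n$) likewise only holds after passing to $\tilde{S}_*$, and it cites Jones merely for a \emph{related} claim on p.~417, not for strict $\TMon$-equivariance of $\psi$ in ordinary singular chains. So you cannot discharge the $\TMon$-part by citation at the level of $S_*$; you must carry out the $B$-comparison in $\tilde{S}_*$ as well. This does not change the architecture of your argument --- your $\tilde{\psi}$ already lands in $\Tot_\Pi\tilde{S}_*Y^\bullet$, so the fix is local --- but as written that verification step has a gap.
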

\begin{proof}It suffices to show that the composition \[\tilde{\psi}\colon S_*\tot Y^\bullet \xrightarrow{\psi} \Tot_\Pi S_* Y^n \rightarrow \Tot_\Pi \tilde{S}_* Y^n\] is an $\OMon\mhyphen$map. Concretely this means checking that it commutes with the $R$ and $B$ operators on both sides. On $\tot Y^\bullet$, the $C_2$-action is given by $(Rf)(\underline{t})=(\rho_nf)(\underline{t}^{\op})$, meaning that $(\alpha_n)_*(R\sigma \times \kappa_n)= (\rho_n \alpha_n)(\sigma \times \kappa^{\op})$. Here $\kappa^{\op}(\underline{t})=\underline{t}^{\op}=(t_n,\ldots,t_0)$. On the other hand, as the $C_2\mhyphen$action on $\Tot_\Pi S_*Y^\bullet$ is $(-1)^{n(n+1)/2}\rho_n$ on level $n$, the $R$ operator on the right hand side gives $R (\alpha_n)_*(\sigma\times \kappa_n)=(-1)^{n(n+1)/2} (\rho_n \alpha_n)_*(\sigma \times \kappa_n)$. By inspecting the definition of the shuffle product, it can be seen that these two expressions are equal when passing to $\tilde{S}_*$.

The analogous check for the $B$ operator involves comparing $\mu_*([\TMon]\times \iota_{n+1})$ with $B\iota_n$, where $\iota_n\in S_*(\stdCyclic^n)$ is the fundamental simplex. Again one needs to pass to $\tilde{S}_*$ for the two to be equal. A claim related to $\psi$ commuting with the $B$ operator is on page 417 of \cite{Jones}.\end{proof}

\begin{Rm}\label{rm:convergence}The condition that $\psi$ is a quasi isomorphism is not always satisfied and is related to the convergence of a generalized Eilenberg--Moore spectral sequence \cite{AndersonEMSS,BousfieldHomologySS}. When $\ring$ is a field and $Y^\bullet=\Map(S^1_\bullet,X)$ for $X$ a simply connected space, the condition is claimed to hold in \cite{AndersonEMSS} and a more detailed discussion is found in \cite{PatrasThomas}. Given that Jones' isomorphism has been proven to hold in greater generality in \cite{AyalaFrancis}, it is quite possible that $\psi$ is a quasi isomorphism under weaker hypotheses. This would strengthen our Main Theorem.\end{Rm}

\section{Proof of the Main Theorem}\label{sec:proof_main_theorem}
\begin{MainTh}Let $\ring$ be a field and $X$ a simply connected space of finite type over $\ring$. Then there is an isomorphism 
\[ H^*(\loops X_{h\OMon}) \cong HD^-_*(S^*(X)).\]\end{MainTh}
\begin{proof}
By Proposition \ref{prop:bar_equivalence} there is an equivalence of dihedral chain complexes $B^{\cyc}_\bullet S^*(X)\simeq S^*(\Map(S^1_\bullet,X))$, which by Example \ref{ex:total_dihedral} and a double complex argument gives an equivalence of differential graded $\OMon$-modules $C_*(S^*(X))=\Tot_\oplus B^{\cyc}_\bullet S^*(X)\simeq \Tot_\oplus S^*(\Map(S^1_\bullet,X))$. Applying the linear dual of Proposition \ref{prop:compare_tot} to the codihedral space $Y^\bullet=\Map(S^1_\bullet,X)$ yields an equivalence of differential graded $\OMon$-modules
\[\Tot_\oplus S^*(\Map(S^1_\bullet,X))\simeq_{\OMon} S^*(\tot \Map(S^1_\bullet,X)).\]
Note that $\tot \Map(S^1_\bullet,X)\cong_{\OMon} \loops X$ by Example \ref{ex:realize_circle} and that the hypothesis of Proposition \ref{prop:compare_tot} is satisfied because of Remark \ref{rm:convergence}. In all, we now have that $C_*(S^*(X))\simeq_{\OMon} S^*(\loops X)$ and Proposition \ref{prop:orbit_equivalence} implies
\[DC^-_*(S^*(X))=(C_*(S^*(X)))^{h\OMon}\simeq (S^*(\loops X))^{h\OMon}.\]
After applying the linear dual of Proposition \ref{prop:compare_orbits} we finally see that the last term is equivalent to $(S^*(\loops X))^{h\OMon}\simeq S^*(\loops X_{h\OMon})$ and the theorem follows.
\end{proof}

\begin{Rm}The exact same methods can be used to show a $C_2$-version of the Jones isomorphism. \[HR^-_*(S^*(X))\cong H^*(\loops X_{hC_2})\] The corresponding cohomology theory is called \emph{negative reflexive homology} $HR^-_*$ and is defined as the homology of $(C_*(A))^{hC_2}$.\end{Rm}

\section{An involutive de Rham isomorphism}\label{sec:deRham}
The goal of this section is to prove that the de Rham cochain algebra $\Omega_{dR}^*(M)$ and the singular cochain algebra $S^*(M)$ on a compact smooth manifold $M$ are quasi isomorphic as involutive dga's. To do this, we will take a zig-zag witnessing the quasi isomorphism without involutions, give all the terms involutions and check that all the maps in the zig-zag preserve these involutions. In particular this involves upgrading the polynomial de Rham forms $A^*_{PL}(M)$ to an involutive dga. The following pair of theorems is the starting point of the proof.

\begin{Th}[{\cite[Theorem 10.9]{FHT_RHT}}] \label{th:deRham1}
Let $\ring$ be a field of characteristic 0. For $K$ a simplicial set, the natural morphisms of differential graded algebras over $\ring$, 
\[A_{PL}(K) \rightarrow (C_{PL}\otimes A_{PL}(K))\leftarrow C_{PL}(K)\] 
are quasi isomorphisms.
\end{Th}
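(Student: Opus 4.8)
The plan is to deduce this from the comparison machinery for simplicial cochain algebras developed in \cite[\S10]{FHT_RHT}. Recall that a \emph{simplicial cochain algebra} is a simplicial object $A=\{A_n\}_{n\ge0}$ in (not necessarily commutative) cochain algebras over $\ring$, and that such an $A$ determines a contravariant functor $K\mapsto A(K):=\Nat(K,A)$ on simplicial sets --- the simplicial maps from $K$ into the underlying simplicial set of $A$, with pointwise cochain algebra structure --- satisfying $A(\Delta[n])=A_n$ by Yoneda. Both outer terms of the statement are of this form: $(A_{PL})_n$ is the commutative cochain algebra of polynomial de Rham forms on $\Delta^n$, and $(C_{PL})_n=C^*(\Delta[n];\ring)$ is the normalized simplicial cochain algebra with cup product, so that $C_{PL}(K)=C^*(K;\ring)$. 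The middle term is $(C_{PL}\otimes A_{PL})(K)$, the value at $K$ of the levelwise tensor product $(C_{PL}\otimes A_{PL})_n:=(C_{PL})_n\otimes_\ring(A_{PL})_n$, and the two arrows are obtained by evaluating at $K$ the morphisms of simplicial cochain algebras $\gamma\mapsto\gamma\otimes1$ and $\omega\mapsto1\otimes\omega$; these are well defined since the unit of each level is preserved by all faces and degeneracies.

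The engine I would isolate is: \emph{if $\varphi\colon A\to A'$ is a morphism of extendable simplicial cochain algebras that is a quasi-isomorphism in every simplicial degree, then $A(\varphi)\colon A(K)\to A'(K)$ is a quasi-isomorphism for all $K$.} Here \emph{extendable} means each restriction $A_n=A(\Delta[n])\to A(\partial\Delta[n])$ is surjective; its role is to force $A(-)$ to send the skeletal pushout squares $K^{(n)}=K^{(n-1)}\cup_{\coprod\partial\Delta[n]}\coprod\Delta[n]$ to pullback squares of surjections. Granting this, one proves the engine by induction on skeleta: on the discrete set $K_0$ one has $A(K_0)=\prod_{K_0}A_0$ and the statement is clear, while the inductive step compares, via the five lemma, the short exact sequences of cochain complexes extracted from those pullback squares (equivalently, one compares the two conditionally convergent spectral sequences of the skeletal filtration). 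For infinite-dimensional $K$ one passes to $A(K)=\lim_n A(\mathrm{sk}_nK)$; extendability makes this tower degreewise surjective, so the relevant $\lim^1$ vanishes and cohomology commutes with the limit.

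It then remains to verify the hypotheses for $A_{PL}$, $C_{PL}$ and $C_{PL}\otimes A_{PL}$, and that the two inclusions are quasi-isomorphisms in each degree. For $C_{PL}$ extendability and degreewise acyclicity ($H(A_n)\cong\ring$) are immediate, since cochains restrict surjectively and $\Delta^n$ is contractible; for $A_{PL}$, extendability is the standard polynomial extension argument and degreewise acyclicity is the polynomial Poincar\'e lemma --- and this is exactly where $\mathrm{char}\,\ring=0$ is used, as over $\F_p$ the classes $t^p,t^{p^2},\dots$ would survive in $H((A_{PL})_n)$. For the tensor product, since $\ring$ is a field one has $(C_{PL}\otimes A_{PL})(L)=C_{PL}(L)\otimes_\ring A_{PL}(L)$ for every finite simplicial set $L$ (finite limits commute with $\otimes_\ring$), so in particular $(C_{PL}\otimes A_{PL})(\partial\Delta[n])=C_{PL}(\partial\Delta[n])\otimes_\ring A_{PL}(\partial\Delta[n])$, and extendability of $C_{PL}\otimes A_{PL}$ follows from that of the two factors. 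The degreewise acyclicity of $(C_{PL})_n\otimes_\ring(A_{PL})_n$ and the claim that $\gamma\mapsto\gamma\otimes1$ and $\omega\mapsto1\otimes\omega$ are degreewise quasi-isomorphisms both follow from the K\"unneth theorem over the field $\ring$. Applying the engine to each inclusion then finishes the proof.

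I expect the main obstacle to be the engine lemma itself: making the skeletal induction and the passage to the inverse limit fully rigorous, where one must be careful that every map in sight is degreewise surjective (this is what forces the $\lim^1$-terms to vanish), that the five-lemma comparison is genuinely natural in $n$, and that $A(-)$ --- being only a right adjoint --- converts the \emph{colimit} presenting $K$ as the union of its skeleta into an honest \emph{limit} of the $A(\mathrm{sk}_nK)$. Everything else, including the properties of $A_{PL}$ and $C_{PL}$ and the tensor-product bookkeeping (which the field hypothesis linearizes), is comparatively routine.
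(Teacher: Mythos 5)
This theorem is quoted by the paper from F\'elix--Halperin--Thomas (Theorem 10.9 there) without an independent proof, and your reconstruction follows the same architecture as the cited source: the ``engine'' (a levelwise quasi-isomorphism of extendable simplicial cochain algebras induces a quasi-isomorphism on $A(K)$ for every $K$, by skeletal induction plus an inverse-limit argument), together with extendability and levelwise acyclicity of $A_{PL}$, $C_{PL}$ and $C_{PL}\otimes A_{PL}$, Künneth for the levelwise statements, and the polynomial Poincar\'e lemma as the (correctly identified) place where characteristic zero enters. Most of this is right; one step, however, is genuinely broken.

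The gap is in your verification that $C_{PL}\otimes A_{PL}$ is extendable. You assert that $(C_{PL}\otimes A_{PL})(L)=C_{PL}(L)\otimes_\ring A_{PL}(L)$ for finite $L$ because ``finite limits commute with $\otimes_\ring$''. That identification is false: $A(L)=\Nat(L,A)$ is an end over the simplices of $L$, and the canonical map $C_{PL}(L)\otimes A_{PL}(L)\to(C_{PL}\otimes A_{PL})(L)$ (sending $\gamma\otimes\omega$ to the family $\sigma\mapsto\gamma_\sigma\otimes\omega_\sigma$) is in general not an isomorphism. It already fails for $L=\partial\ford[1]=\ford[0]\sqcup\ford[0]$, which is exactly one of the boundaries occurring in the definition of extendability: there $(C_{PL}\otimes A_{PL})(L)\cong(C_{PL,0}\otimes A_{PL,0})^{\times 2}$, whereas $C_{PL}(L)\otimes A_{PL}(L)\cong(C_{PL,0})^{\times 2}\otimes(A_{PL,0})^{\times 2}$, and the comparison map is not injective; for other finite $L$ (e.g. $L=\ford[1]/\partial\ford[1]$ in degree one, where classes of the form $a\otimes b$ with $a$ of degree $0$ on one factor and $b$ dual to the edge on the other are not hit) it even fails to be surjective. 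Consequently, surjectivity of $(C_{PL}\otimes A_{PL})_n\to(C_{PL}\otimes A_{PL})(\partial\ford[n])$ does not follow formally from extendability of the two factors by your route; it needs its own argument. This is precisely what the cited source supplies (Lemma 10.7 in FHT): one proves directly that $C_{PL}\otimes A$ is extendable for an arbitrary simplicial cochain algebra $A$, using the explicit basis of $C^*(\ford[n])$ dual to the simplices and extension by zero of simplicial cochains. A second, minor, imprecision: for infinite-dimensional $K$, surjectivity of the tower $A(\mathrm{sk}_nK)$ gives vanishing of $\lim^1$ of the tower of cochain complexes, not of their cohomology groups; to conclude you should compare the two Milnor exact sequences via the five lemma (or, for the levelwise acyclic $A$ at hand, note that the towers of cohomology groups are eventually constant), rather than asserting outright that cohomology commutes with the limit.
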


\begin{Th}[{\cite[Theorem 11.4]{FHT_RHT}}] \label{th:deRham2}
For a smooth manifold $M$, the natural morphisms of differential graded algebras over $\ring=\R$,
\[\Omega^*_{dR}(M)\xrightarrow{\alpha_M}A_{dR}(\Sing^\infty_\bullet(M))\xleftarrow{\beta_M}A_{PL}(\Sing^\infty_\bullet(M))\xleftarrow{\gamma_M} A_{PL}(\Sing_\bullet(M))\]
are quasi isomorphisms.
\end{Th}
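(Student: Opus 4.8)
The plan is to recall the (standard) proof of this statement, which is \cite[Theorem 11.4]{FHT_RHT}: the involutive refinement in the rest of the section will reuse all three comparison maps of the zig-zag as well as their naturality in $M$, so it is worth keeping both in view. The zig-zag factors into three quasi-isomorphisms of rather different character, and I would verify them one at a time so that naturality is transparent at each stage.

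First I would treat the two maps on the polynomial side. Smooth approximation of singular simplices shows that the inclusion $\Sing^\infty_\bullet(M)\hookrightarrow\Sing_\bullet(M)$ is a weak homotopy equivalence of simplicial sets, and the polynomial de Rham functor $A_{PL}$ sends weak equivalences of simplicial sets to quasi-isomorphisms of differential graded algebras, so $\gamma_M$ is a quasi-isomorphism; concretely this uses that $A_{PL}$ is an extendable simplicial cochain algebra whose value on each simplex $\ford^n$ is acyclic, as in \cite[Chapter~10]{FHT_RHT}. The map $\beta_M$ is induced by the inclusion of the simplicial cochain algebra of polynomial forms into that of smooth forms; since $A_{dR}$ is again extendable and both $A_{PL}(\ford^n)$ and $A_{dR}(\ford^n)$ are acyclic (the polynomial and the ordinary Poincar\'e lemmas), the same dimension-by-dimension comparison shows that $\beta_M$ is a quasi-isomorphism, for any simplicial set in place of $\Sing^\infty_\bullet(M)$.

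The remaining map $\alpha_M$ is the simplicial de Rham map, restricting a global smooth form on $M$ to each smooth simplex. Its source computes the de Rham cohomology $H^*_{dR}(M)$ by definition and its target computes $H^*(M;\R)$, and the assertion that $\alpha_M$ is a quasi-isomorphism is local on $M$: I would run a Mayer--Vietoris induction over a finite good cover of $M$ (which exists because $M$ is compact, or one may instead invoke partitions of unity), reducing to a smoothly contractible chart on which both sides are acyclic by the Poincar\'e lemma. I expect the genuine obstacle to be not any of these geometric inputs but the bookkeeping behind the slogan that an extendable simplicial cochain algebra which is acyclic on each simplex computes ordinary cohomology --- the fundamental-class and extension arguments of \cite[Chapter~10]{FHT_RHT} --- which is why that reference devotes a full chapter to it. Granting that machinery, all three quasi-isomorphisms, together with their evident naturality in $M$, follow.
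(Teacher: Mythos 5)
The paper gives no proof of this statement: it is quoted verbatim from F\'elix--Halperin--Thomas \cite[Theorem 11.4]{FHT_RHT} and used as a black box, so there is no internal argument to compare yours against. Your sketch is essentially the standard proof from that reference: $\gamma_M$ via smooth approximation of singular simplices plus the Chapter~10 machinery (an extendable simplicial cochain algebra that is acyclic on each $\ford^n$ computes ordinary cohomology), $\beta_M$ via the same machinery applied to the inclusion $A_{PL\,\bullet}\hookrightarrow A_{dR\,\bullet}$, and $\alpha_M$ via the Poincar\'e lemma and a Mayer--Vietoris induction. The one soft spot is $\alpha_M$ in the generality actually stated: the theorem concerns an arbitrary smooth manifold, so a \emph{finite} good cover is not available, and your parenthetical (compactness, or ``invoke partitions of unity'') stands in for the genuine extra step --- either the usual extension of the Mayer--Vietoris induction to infinite good covers (ordering the cover, handling disjoint unions, and passing to colimits, as in Bott--Tu) or a sheaf-theoretic argument using fine sheaves. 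Since this paper only applies the theorem to compact $M$ in Section~\ref{sec:deRham}, the caveat costs nothing downstream, but it should be made explicit if the sketch is meant to prove the theorem as quoted.
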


All of the terms except for the smooth forms $\Omega^*_{dR}(M)$ can be defined using the following construction.
\begin{Def}\label{def:cochain_functor} For every simplicial dga $A_\bullet$, we get an associated \emph{cochain functor} $A(-) = \Nat_{\ford^{\op}} (-,A)\colon \sSet^{\op} \rightarrow \dga$. This construction is covariant in $A_\bullet$, so we have a functor $\sSet^{\op}\times \sdga \rightarrow \dga$ \end{Def}
\begin{Ex}Singular cochains of a space or more generally simplicial cochains of a simplicial set $K_\bullet$ can be viewed as $C_{PL}(K_\bullet)$, where $C_{PL \bullet}$ is the simplicial dga that is $C^*(\ford [ n ] )$ in simplicial degree $n$.\end{Ex}
\begin{Ex}Another important example is the \emph{piecewise linear forms} $A_{PL \bullet}$. In simplicial degree $n$ this is the cdga \[A_{PL}[n]=\Lambda(t_0,\ldots,t_n,dt_0,\ldots,dt_n)/(\Sigma t_i -1).\] Here the generators $t_i$ are of degree 0 and $\Lambda$ denotes the free graded commutative algebra. Because $A_{PL \bullet}$ is graded commutative in each simplicial degree, the associated cochain functor lands in cdga's.\end{Ex}
\begin{Ex}Smooth forms on the geometric simplices also form a simplicial cdga $A_{dR \bullet}$. The map $\beta_M$ is induced by the inclusion $A_{PL \bullet}\hookrightarrow A_{dR \bullet}$.\end{Ex}
\begin{Rm}The map $\alpha_M$ is induced by the maps $\sigma^*\colon \Omega^*_{dR}(M)\rightarrow \Omega^*_{dR}(\ford^{n}))$ that pull back forms along smooth simplices $\sigma\in \Sing^\infty _n(M)$. The map $\gamma_M$ is induced by the inclusion of smooth singular simplices into continuous singular simplices $\Sing^\infty_\bullet(M)\hookrightarrow S_\bullet(M)$. The last two maps come from the simplicial maps $A_{PL \bullet}\rightarrow C_{PL \bullet} \otimes A_{PL \bullet} \leftarrow C_{PL \bullet}$.
\end{Rm}

\subsection*{The involutions}
From now on, $\Omega^*_{dR}(M)$ will carry as its involution the identity, see Example \ref{ex:cdga}. For all the other terms, we extend Definition \ref{def:cochain_functor} so that it lands in involutive dga's. For this, we need to change the input\textemdash instead of simplicial sets, we use reflexive sets $\rSet=\Category{Set}^{\reflexive^{\op}}$ and instead of simplicial dga's we use the category $\invsdga$ defined below.

\begin{Def}An \emph{involutive reflexive dga} is a reflexive object in $\dga$ such that the reflexive structure map $r_n$ is an involution of dga's in every simplicial level $n$. The category of such objects with morphisms of reflexive dga's is called $\invsdga\subset \dga^{\reflexive^{\op}}$. Note that this is \emph{not} the same as simplicial objects in involutive dga's.\end{Def}
\begin{Ex}\label{ex:invsdga_pl}There is an involutive reflexive structure on $A_{PL \bullet}$ using $r_n\colon t_i\mapsto t_{n-i}$.\end{Ex}
\begin{Ex}\label{ex:invsdga_tensor}Given $A,B\in\invsdga$, their levelwise tensor product is again an involutive reflexive dga.\end{Ex}
\begin{Prop}Simplicial cochains $C_{PL \bullet}$ carry the structure of an $\invsdga$.\end{Prop}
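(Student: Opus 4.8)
The plan is to equip $C_{PL \bullet}$ with reflexive structure maps $r_n\colon C_{PL}[n]\to C_{PL}[n]$ coming from the orientation-reversing flip of the standard simplex, and then to check the two things required by the definition of $\invsdga$: that the $r_n$ satisfy the reflexive relations with the simplicial faces and degeneracies of $C_{PL \bullet}$, and that each $r_n$ is an anti-involution of dga's. Recall that $C_{PL}[n]=C^*(\ford[n])$, whose underlying complex is the normalized cochain complex of the simplicial set $\ford[n]$, whose $k$-chains are spanned by the strictly increasing tuples $(a_0<\dots<a_k)$ in $\{0,\dots,n\}$. I would define the flip on chains by $\phi_n(a_0<\dots<a_k)=(-1)^{k(k+1)/2}(n-a_k<\dots<n-a_0)$ — still strictly increasing, hence well defined on normalized chains — and let $r_n$ be its $\ring$-linear dual. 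Unravelling the dual, $r_n$ is exactly the flip involution $\inv\gamma(\sigma)=(-1)^{\degree{\gamma}(\degree{\gamma}+1)/2}\gamma(\inv\sigma)$ of Example~\ref{ex:singular_cochains_involution}, now read off the simplicial set $\ford[n]$; it is the evident analogue of the coordinate-reversing map $t_i\mapsto t_{n-i}$ on $A_{PL}[n]$ used in Example~\ref{ex:invsdga_pl}.

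Establishing the reflexive relations should be routine. That $\phi_n$ is a chain map and that $\phi_n^2=\id$ (hence $r_n^2=\id$) are short sign verifications, the sign $(-1)^{k(k+1)/2}$ being the usual one for reversing the vertices of a $k$-simplex. For the compatibility with the simplicial structure, note that $d_i\colon C_{PL}[n]\to C_{PL}[n-1]$ is dual to the map on normalized chains induced by the coface $\delta^i$, so the relation $d_i r_n=r_{n-1}d_{n-i}$ is dual to the identity $\phi_n\circ(\delta^i)_*=(\delta^{n-i})_*\circ\phi_{n-1}$, which I would check directly on a basis tuple: reversing the tuple converts ``omit the value $i$'' into ``omit the value $n-i$'', and there is no sign discrepancy since $(\delta^i)_*$ and $(\delta^{n-i})_*$ preserve simplicial degree. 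The degeneracy relation $s_i r_n=r_{n+1}s_{n-i}$ follows by the same computation with $\sigma^i$ and $\sigma^{n-i}$; here the two composites may both produce a degenerate tuple, but in the same position, so the identity persists in the normalized complex. Together these say that $r_\bullet$ makes $C_{PL \bullet}$ a reflexive object in $\dga$.

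The substantive point — and the one step I expect to be slightly delicate — is that each $r_n$ is an anti-involution of dga's, i.e.\ $\inv 1=1$ and $\inv{\gamma_1\cup\gamma_2}=(-1)^{\degree{\gamma_1}\degree{\gamma_2}}\inv{\gamma_2}\cup\inv{\gamma_1}$. The unit cochain is fixed by the flip, so $\inv 1=1$. For anti-multiplicativity one feeds the flipped simplex $\inv\sigma$ into the Alexander--Whitney cup-product formula recalled in Example~\ref{ex:singular_cochains_involution}: the flip exchanges the roles of the front and back faces appearing there, and tracking the resulting Koszul signs is precisely the sign chase carried out in Proposition~\ref{prop:sing_inv}. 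Since that argument is purely simplicial — it uses only the face operators and the ring structure on $\ring$-valued cochains — it goes through verbatim with $\ford[n]$ in place of the singular set of a space, so nothing new has to be computed. Assembling the three points, $r_\bullet$ upgrades $C_{PL \bullet}$ to an involutive reflexive dga.
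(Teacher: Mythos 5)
Your proposal is correct and follows essentially the same route as the paper: define the order-reversing flip on $\ford[n]$, dualize with the sign $(-1)^{p(p+1)/2}$, and verify the anti-simplicial relations, compatibility with the differential, unitality, and anti-multiplicativity of the cup product via the front/back-face identities $d_{p+1}\cdots d_{p+q}\inv{\sigma}=\inv{d_0^q\sigma}$ and $\inv{d_{q+1}\cdots d_{q+p}\sigma}=d_0^p\inv{\sigma}$. One small caveat: the cup-product sign chase you defer to Proposition~\ref{prop:sing_inv} is not carried out there (that proposition only traces the identification $S^*(X)\cong C_{PL}(\Sing_\bullet X)$ and its proof relies on the present statement), so you should perform that short Koszul-sign computation explicitly as the paper does.
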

\begin{proof}The standard simplicial sets $\ford [n]$ carry an involution by reversing order: $\inv{\sigma}(i)=n-\sigma(p-i)$ for $\sigma\in\ford [n]_p$. If we view these simplices as coming from the Yoneda embedding, we see that $\inv{\sigma\circ\sigma'}=\inv{\sigma}\circ\inv{\sigma'}$ and we see that the construction is both reflexive (in $p$) and coreflexive (in $n$). We now define the reflexive structure map on the simplicial dga $C_{PL \bullet}$ to be $\inv{\gamma}(\sigma)=(-1)^{p(p+1)/2}\gamma(\inv{\sigma})$, where $\gamma$ is a $p$ cochain in simplicial degree $n$. Unitality, $\inv{\inv{\gamma}}=\gamma$ and graded linearity are clear so we show the remaining properties.
\begin{description}
\item[Anti-simplicial] The simplicial structure of $C_{PL}$ is the cosimplicial direction of $\ford [n]_p$. For example, $\delta^i\colon \ford [n-1]\rightarrow\ford [n]$ pulls back to $d_i\colon C^*(\ford [n])\rightarrow C^*(\ford [n-1])$. It is then an elementary check to see that $\inv{d_i\gamma}(\sigma)=d_{n-i}\inv{\gamma}(\sigma)$ and similarly for the degeneracies.
\item[Differential] First we observe that on the level of simplicial chains we have $\inv{d\sigma}=(-1)^p d\inv{\sigma}$. Then it follows easily that $d\inv{\gamma}=\inv{d\gamma}$.
\item[Multiplication] Because the involution is anti-simplicial, we have the following identities by induction: For a $(p+q)$ simplex $\sigma$
\[d_{p+1}\ldots d_{p+q} \inv{\sigma} = \inv{d_o^q\sigma}\]
\[\inv{d_{q+1}\ldots d_{q+p}\sigma}=d_o^p\inv{\sigma}\]
The cup product of cochains $\gamma_1, \gamma_2$ of degrees $p$ and $q$ is defined as $\gamma_1\cup\gamma_2 (\sigma)=(-1)^{pq}\gamma_1(d_{p+1}\ldots d_{p+q}\sigma)\gamma_2(d_o^p\sigma)$. Now we have 
\begin{align*}
\inv{\gamma_1\cup\gamma_2}(\sigma)&=(-1)^{pq} (-1)^{(p+q)(p+q-1)/2}\gamma_1(d_{p+1}\ldots d_{p+q}\inv{\sigma)}\gamma_2(d_o^p\inv{\sigma})\\
&=(-1)^{pq} \gamma_1(\inv{d_o^q\sigma})\gamma_2(\inv{d_{q+1}\ldots d_{q+p}\sigma})\\
&=(-1)^{(p+q)(p+q-1)/2 + q(q-1)/2 + p(p-1)/2}\inv{\gamma_2}\cup\inv{\gamma_1}(\sigma)\\
&=(-1)^{pq}\inv{\gamma_2}\cup\inv{\gamma_1}(\sigma)
\end{align*}
\end{description}
\end{proof}

\begin{Prop}\label{prop:inv_cochain_functor}The construction of Definition \ref{def:cochain_functor} gives a functor $(\rSet)^{\op}\times\invsdga\rightarrow \invdga$ using the same dga $A(K)$ associated to the underlying simplicial dga of $A$ and underlying simplicial set $K$. This is defined to carry involution $\inv{\Phi}=(\sigma\mapsto I \Phi (R \sigma))$ for $\Phi\in A(K)$ with $I$ and $R$ the reflexive structure maps $r_n$ of $A$ and $K$ respectively.\end{Prop}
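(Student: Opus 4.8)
The dga structure on $A(K)$ is already furnished by Definition \ref{def:cochain_functor}, so the plan is only to add the involution $\inv{\Phi}=(\sigma\mapsto I\,\Phi(R\sigma))$ and to check, by bare manipulation of the reflexive identities, that it is an involution of dga's and that the two functorialities preserve it; no new idea is required. First I would verify that $\inv{\Phi}$ is again a natural transformation in $\Nat_{\ford^{\op}}(K,A)$, i.e.\ that it commutes with the generating face and degeneracy operators. For a face operator, with $\sigma\in K_n$,
\[
\inv{\Phi}(d_i\sigma)=I_{n-1}\,\Phi\big(R_{n-1}d_i\sigma\big)=I_{n-1}\,\Phi\big(d_{n-i}R_n\sigma\big)=I_{n-1}d_{n-i}\,\Phi(R_n\sigma)=d_i I_n\,\Phi(R_n\sigma)=d_i\inv{\Phi}(\sigma),
\]
where the second and fourth equalities use the reflexive identity $d_i r_n=r_{n-1}d_{n-i}$ on $K$ and on $A$ respectively (after the index substitution $i\mapsto n-i$), and the third uses naturality of $\Phi$. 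The degeneracy case is identical using $s_i r_n=r_{n+1}s_{n-i}$. Note this step uses exactly that $A$ is a \emph{reflexive} object (the $r_n^A$ compatible with faces and degeneracies).

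Next I would check the involution axioms for $\inv{(-)}$ on $A(K)$, all of which are inherited levelwise from the hypothesis that each $r_n^A$ is a dga involution. Since $r_n^A$ has internal degree zero and $r_n^K$ preserves simplicial degree, $\inv{\Phi}$ has the same total degree as $\Phi$. We have $\inv{1}=1$ from $I(1)=1$, and $\inv{\inv{\Phi}}=\Phi$ from $I^2=\id$ together with $R^2=\id$. Compatibility with the differential is $\inv{d\Phi}(\sigma)=I\,d\big(\Phi(R\sigma)\big)=d\,I\big(\Phi(R\sigma)\big)=d\inv{\Phi}(\sigma)$, using that $I$ is a chain map. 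Anti-multiplicativity descends from that of $I$: writing $p=\degree{\Phi}$, $q=\degree{\Psi}$,
\[
\inv{\Phi\Psi}(\sigma)=I\big(\Phi(R\sigma)\,\Psi(R\sigma)\big)=(-1)^{pq}\,I\Psi(R\sigma)\,I\Phi(R\sigma)=(-1)^{pq}\,\big(\inv{\Psi}\,\inv{\Phi}\big)(\sigma).
\]
The point to keep in mind is that any sign appearing in the involution (such as the $(-1)^{p(p+1)/2}$ in the $C_{PL}$ case) is already built into the levelwise dga involutions $r_n^A$, so there is no further sign to insert at the level of the cochain functor.

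Finally I would record functoriality. For a map $f\colon K\to L$ of reflexive sets, precomposition $f^*\colon A(L)\to A(K)$ is a map of involutive dga's because $f$ intertwines the reflexive structure maps: $f^*(\inv{\Phi})(\sigma)=I\,\Phi(R_L f\sigma)=I\,\Phi(f R_K\sigma)=\inv{f^*\Phi}(\sigma)$. For a map $g\colon A\to B$ in $\invsdga$, postcomposition $g_*\colon A(K)\to B(K)$ is a map of involutive dga's because $g$ intertwines the levelwise involutions: $g_*(\inv{\Phi})(\sigma)=g\big(I_A\,\Phi(R\sigma)\big)=I_B\,g\Phi(R\sigma)=\inv{g_*\Phi}(\sigma)$. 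Hence $A(K)\in\invdga$, functorially in $(\rSet)^{\op}\times\invsdga$. The only genuine care needed anywhere is the bookkeeping of the index reversal $i\mapsto n-i$ in the reflexive identities, and making sure the two hypotheses on $A$ are invoked in the right places: $A$ reflexive for the well-definedness of $\inv{\Phi}$, and each $r_n^A$ a dga involution for the algebra and differential axioms.
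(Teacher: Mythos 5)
Your proposal is correct and follows essentially the same route as the paper: the same levelwise checks (simpliciality of $\inv{\Phi}$ via $d_i r_n = r_{n-1} d_{n-i}$, compatibility with the differential, unitality, $\inv{\inv{\Phi}}=\Phi$, and anti-multiplicativity inherited from the levelwise involutions of $A$), with the paper likewise dismissing functoriality as immediate from the definitions of morphisms in $\rSet$ and $\invsdga$ where you spell it out slightly more. No gaps.
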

\begin{proof}
The fact that the construction is functorial follows immediately from the definition of the morphisms in $\rSet$ and $\invsdga$. What needs to be checked is that the map described really is an involution on $A(K)$. 
\begin{description}
\item[Target] The $\inv{\Phi}$ is an element of $A(K)=\Nat_\ford^{\op}(K,A)$, i.e., it is simplicial: \[\inv{\Phi}(d_i\sigma)=I\Phi (Rd_i\sigma)=I\Phi(d_{n-i}R\sigma)=Id_{n-i}\Phi(R\sigma)=d_i I\Phi(R\sigma)=d_i \inv{\Phi},\] and similarly for the degeneracy maps.
\item[Differential] The differential of a natural transformation $\Phi\in A(K)$ was defined using the target $A$. We see that \[d\inv{\Phi}(\sigma)=d_A I\Phi(R\sigma)=I(d_A\Phi)(R\sigma)=\inv{d\phi}(\sigma).\]
\item[Unitality] Is $\inv{1}=1\in A(K)$? The unit of $A(K)$ is defined to send any $\sigma\in K_n$ to the unit in simplicial degree $n$. So we see that \[\inv{1}(\sigma)=I 1(R\sigma)=1(R\sigma)=1(\sigma).\]
\item[Involution] The fact that $\inv{\inv{\Phi}}=\Phi$ follows from the properties $R\circ R=id_K$ and $I\circ I=id_A$.
\item[Multiplication] Let $\Phi,\Psi\in A(K)$ be of degree $p$ and $q$ respectively. We check that
\begin{align*}
\inv{\Phi \Psi}(\sigma)&=I(\Phi \Psi)(R\sigma)=I(\Phi(R\sigma)\Psi(R\sigma))\\
&=(-1)^{pq}(I\Psi(R\sigma))(I\Phi(R\sigma)) =(-1)^{pq}\inv{\Psi}(\sigma)\inv{\Phi}(\sigma)\\
&=(-1)^{pq}\inv{\Psi}\, \inv{\Phi}(\sigma).
\end{align*}
\end{description}
\end{proof}

\begin{Prop}\label{prop:sing_inv}The dga of singular cochains $S^*(X)=C_{PL}(\Sing_\bullet X)$ carries an involution given by $\inv{\gamma}(\sigma)=(-1)^{p(p+1)/2}\gamma(\inv{\sigma})$.\end{Prop}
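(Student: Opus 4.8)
My plan is to deduce this directly from Proposition \ref{prop:inv_cochain_functor} rather than re-verifying the involution axioms by hand. First I would recall that $\Sing_\bullet X$ is a reflexive set, with reflexive structure map $r_n(\sigma)(t_0,\ldots,t_n)=\sigma(t_n,\ldots,t_0)$, so that it is an object of $\rSet$; and that by the preceding proposition $C_{PL \bullet}$ is an involutive reflexive dga, hence an object of $\invsdga$. Then Proposition \ref{prop:inv_cochain_functor}, applied with $K=\Sing_\bullet X$ and $A=C_{PL \bullet}$, immediately endows $S^*(X)=C_{PL}(\Sing_\bullet X)$ with the structure of an involutive dga, the involution being $\inv{\Phi}=(\sigma\mapsto I\Phi(R\sigma))$ with $I$ and $R$ the reflexive structure maps of $C_{PL \bullet}$ and $\Sing_\bullet X$ respectively.

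The remaining work is to check that this abstract involution coincides with the explicit formula in the statement. Since $R$ sends a singular $p$-simplex to $\inv{\sigma}$ without an extra sign, while $I$ contributes the factor $(-1)^{p(p+1)/2}$ on a $p$-cochain (by the preceding proposition), unwinding the composite $I\circ\gamma\circ R$ through the standard identification of $\gamma(\sigma)\in C_{PL}[p]^p$ with the evaluation of a singular cochain on a simplex (via the universal simplices $\kappa_n$ of Lemma \ref{lem:dga_exists}) yields exactly $\inv{\gamma}(\sigma)=(-1)^{p(p+1)/2}\gamma(\inv{\sigma})$, which is also the formula announced in Example \ref{ex:singular_cochains_involution}.

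I expect the main obstacle to be purely bookkeeping: keeping the sign conventions straight between the internal cochain degree $p$ and the simplicial degree $n$ of $C_{PL \bullet}$, and confirming that the involution descends to the normalized complex — the latter being automatic, since a reflection of a degenerate simplex is again degenerate. No idea beyond Proposition \ref{prop:inv_cochain_functor} is needed; alternatively one could repeat for $\Sing_\bullet X$ the direct check of the involution axioms carried out for $C_{PL \bullet}$ above, using the explicit differential and cup product of Example \ref{ex:singular_cochains_involution}, but routing through the functoriality statement is considerably shorter.
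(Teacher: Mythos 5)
Your proposal is correct and follows essentially the same route as the paper: the involution is obtained by feeding $\Sing_\bullet X\in\rSet$ and $C_{PL}\in\invsdga$ into Proposition \ref{prop:inv_cochain_functor}, and the explicit sign $(-1)^{p(p+1)/2}$ is then read off by unwinding the identification $S^*(X)\cong C_{PL}(\Sing_\bullet X)$ via evaluation at the fundamental simplex (the paper uses $e_p=\id_{[p]}\in\ford[p]_p$ rather than the chains $\kappa_n$ of Lemma \ref{lem:dga_exists}, but this is only a labelling difference). Your remark that the involution descends to the normalized complex because reflections of degenerate simplices are degenerate is also consistent with the relation $s_i r_n=r_{n+1}s_{n-i}$.
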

\begin{proof}Combining the $\rSet\mhyphen$structure of $\Sing_\bullet(X)$ with the $\invsdga$ structure of $C_{PL}$ supplies the singular cochain dga $S^*(X)$ with an involution. It is useful to see what it does concretely. First we will describe the isomorphism $S^*(X)\cong C_{PL}(\Sing_\bullet(X))$ precisely. Let $\lambda \in C_{PL}(\Sing_\bullet(X))$, say of degree $p$: A map that assigns to every non-degenerate $n$ simplex in $\Sing_n(X)$ a $p$ cochain in $(C_{PL})_n=C^p(\ford [n])$. It corresponds to the singular cochain in $S^p(X)$ that sends $\sigma \mapsto \lambda_\sigma (c_p)$ where $c_p$ is the fundamental simplex of $\ford[p]$, that is $id_p\in \ford([p],[p])$. The other way around, given a $\gamma\in S^p(X)$, the corresponding element of $C_{PL}(\Sing_\bullet(X))$ sends $\sigma \mapsto C^p(\sigma_*)(\gamma)=(\tau\mapsto\gamma(\sigma_*\tau))$, where $\sigma_*\colon\ford[n]\rightarrow \Sing_\bullet(X)$ using the Yoneda lemma. We now chase the involution.
\begin{align*}
\gamma &\mapsto (\sigma \mapsto C^p(\sigma_*)(\gamma))\\
&\mapsto (\sigma\mapsto \inv{\sigma} \mapsto C^p(\sigma_*)(\gamma) \mapsto (\ford[n]_p \ni \tau \mapsto (-1)^{p(p+1)/2}\gamma(\inv{\sigma}_* \inv{\tau})))\\
&\mapsto (\sigma\mapsto (-1)^{p(p+1)/2} \gamma(\inv{\sigma}_* \inv{e_p})=(-1)^{p(p-1)/2}\gamma(\inv{\sigma}))
\end{align*}
In the last line we use the fact that the fundamental simplices are fixed by the involution $\inv{e_p}=e_p$ and that $\inv{\sigma}_* e_p = \inv{\sigma}$. So we see that all the involution does is add a sign and evaluate on the flipped simplex: $\inv{\gamma}(\sigma)=(-1)^{p(p+1)/2}\gamma(\inv{\sigma})$. The exact same holds for smooth simplices.
\end{proof}

\begin{Th}The maps in the zig-zags of Theorems \ref{th:deRham1} and \ref{th:deRham2} are maps of involutive dga's.\end{Th}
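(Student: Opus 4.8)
The plan is to realize every algebra occurring in the two zig-zags, \emph{except} the smooth forms $\Omega^*_{dR}(M)$, as the value of the involutive cochain functor of Proposition \ref{prop:inv_cochain_functor} at a fixed reflexive set applied to an object of $\invsdga$; the smooth forms will be handled by hand. Once this is set up, the only substantive checks left are (i) that a short list of \emph{simplicial} dga maps are morphisms in $\invsdga$, and (ii) a one-line pullback computation for $\alpha_M$; everything else then follows from functoriality of the cochain functor in its two variables.

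First I would pin down the needed involutive reflexive structures. On the simplicial cdga $A_{dR\bullet}$ of smooth forms on the geometric simplices, take as $r_n$ the pullback along the order-reversing affine automorphism $\rho_n\colon\ford^n\to\ford^n$, $(t_0,\ldots,t_n)\mapsto(t_n,\ldots,t_0)$. Because $\rho_n$ is affine with $\rho_n^2=\id$ and conjugates the $i$-th coface of $\ford^\bullet$ to the $(n-i)$-th one, this makes $A_{dR\bullet}$ an $\invsdga$ by the same elementary arguments already used for $A_{PL\bullet}$ in Example \ref{ex:invsdga_pl} and for $C_{PL\bullet}$, which I would simply cite. As $\rho_n^*$ sends $t_i$ to $t_{n-i}$, it restricts on the sub-cdga $A_{PL\bullet}$ to the involution of Example \ref{ex:invsdga_pl}, so the inclusion $A_{PL\bullet}\hookrightarrow A_{dR\bullet}$ is a morphism of $\invsdga$'s; this is the source of $\beta_M$. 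Next, $C_{PL\bullet}\otimes A_{PL\bullet}$ is an $\invsdga$ with the levelwise tensor involution (Example \ref{ex:invsdga_tensor}), and the two structure maps $A_{PL\bullet}\to C_{PL\bullet}\otimes A_{PL\bullet}\leftarrow C_{PL\bullet}$ of Theorem \ref{th:deRham1} each merely insert a unit into one tensor factor in every simplicial level; since $\inv 1=1$ and the tensor involution carries no sign on a degree-zero factor, these maps commute levelwise with the reflexive structure maps and are morphisms in $\invsdga$.

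With these structures fixed, Proposition \ref{prop:inv_cochain_functor} immediately upgrades all maps except $\alpha_M$. Indeed the inclusion $\Sing^\infty_\bullet(M)\hookrightarrow\Sing_\bullet(M)$ is a map of reflexive sets (the flip $\sigma\mapsto\inv\sigma$ of a smooth simplex is again smooth), so by functoriality in the reflexive-set variable it induces $\gamma_M$ as a morphism of involutive dgas; and by functoriality in the $\invsdga$ variable the morphisms of the previous paragraph induce $\beta_M$ and, for any reflexive set $K$ — in particular $K=\Sing_\bullet(M)$ — the two maps of Theorem \ref{th:deRham1}, again as morphisms of involutive dgas. It then remains to treat $\alpha_M\colon\Omega^*_{dR}(M)\to A_{dR}(\Sing^\infty_\bullet M)$, $\omega\mapsto(\sigma\mapsto\sigma^*\omega)$, where $\Omega^*_{dR}(M)$ carries the identity involution (Example \ref{ex:cdga}). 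Writing $R$ for the reflexive structure on $\Sing^\infty_\bullet(M)$, so that $R\sigma=\sigma\circ\rho_n$, and $I=\rho_n^*$ for the one on $A_{dR\bullet}$, one computes
\[
\inv{\alpha_M(\omega)}(\sigma)=I\bigl(\alpha_M(\omega)(R\sigma)\bigr)=\rho_n^*\bigl((\sigma\circ\rho_n)^*\omega\bigr)=(\rho_n^*\circ\rho_n^*)(\sigma^*\omega)=\sigma^*\omega=\alpha_M(\inv\omega)(\sigma),
\]
so $\alpha_M$ preserves the involutions, which would complete the proof.

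The step I expect to require the most care is the bookkeeping of signs and orientations in the various reflexive structures — concretely, making it transparent that no degree-dependent sign of the type $(-1)^{p(p+1)/2}$ appearing in Proposition \ref{prop:sing_inv} intervenes for $A_{PL\bullet}$ or $A_{dR\bullet}$ (the reason being that these are built from the geometric simplices $\ford^n$, so the reflection acts simply by pulling back forms along $\rho_n$, with no dualization). Beyond this I anticipate no real obstacle: the argument is formal once the $\invsdga$ structures are correctly normalized.
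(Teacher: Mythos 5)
Your proposal is correct and follows essentially the same route as the paper: all maps except $\alpha_M$ are handled by bifunctoriality of the cochain functor after the elementary checks that $A_{PL\bullet}\hookrightarrow A_{dR\bullet}$, the two maps into $C_{PL\bullet}\otimes A_{PL\bullet}$, and $\Sing^\infty_\bullet(M)\hookrightarrow\Sing_\bullet(M)$ are morphisms in $\invsdga$ resp.\ $\rSet$, while $\alpha_M$ is checked by hand via the same computation $\inv{\{\sigma^*\omega\}}=\{\rho_n^*(\sigma\circ\rho_n)^*\omega\}=\{\sigma^*\omega\}$, using that the trivial involution on $\Omega^*_{dR}(M)$ reduces the claim to pointwise invariance of the image. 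Your additional remark that no sign of the form $(-1)^{p(p+1)/2}$ intervenes for $A_{PL\bullet}$ or $A_{dR\bullet}$ is consistent with the paper's (unsigned) reflexive structures on these simplicial cdga's.
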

\begin{proof}
The only map not given by bifunctoriality of the `cochain functor' construction is $\alpha_M$. It is given by sending a form $\omega\mapsto \{\sigma^*\omega\}_{\sigma\in \Sing^\infty_\bullet(M)}$. As $\Omega^*_{dR}(M)$ is graded commutative, the identity map will act as the involution. So to check that $\alpha_M$ respects the involution it is equivalent to check that the image is pointwise fixed by the involution. The involution in the target $A_{dR}(\Sing^\infty_\bullet(M))$ is given by the combination of $I_{A_{dR}}$ and $R_{\Sing^\infty_\bullet}(M)$. The first pulls forms on $\ford^n$ back along $\phi_n$ (the map that flips coordinates), the second flips coordinates of smooth singular simplices $R\sigma=\sigma\circ\phi = \inv{\sigma}$. \[\inv{\{\sigma^*\omega\}}=\{I(R\sigma)^*\omega\}=\{\phi^*(\sigma \circ \phi)^*\omega\}=\{\phi^*\phi^*\sigma^*\omega\}=\{\sigma^*\omega\}\]

The map $\beta_M$ is induced by the inclusion of piecewise linear forms into de Rham forms on $\ford^n$, $A_{PL}\hookrightarrow A_{dR}$, which is clearly a morphism in $\invsdga$. So $\beta_M$ respects the involution by functoriality. The same holds for $\gamma_M$, which is induced by the morphism $\Sing^\infty_\bullet(M) \hookrightarrow \Sing_\bullet(M)$ in $\rSet$.

Finally, it is an elementary check that the tensor product of simplicial dga's can be promoted to a tensor product in $\invdga$ (see Example \ref{ex:invsdga_tensor}) and that $A_{PL}\rightarrow C_{PL}\otimes A_{PL}\leftarrow C_{PL}$ are morphisms in $\invdga$. Hence the last three maps in the zig-zag respect the involution.
\end{proof}
\begin{Cor}De Rham cochains $\Omega^*_{dR}(M)$ with trivial involution and normalized singular cochains $S^*(M)$ with the involution from Proposition \ref{prop:sing_inv} are quasi isomorphic as involutive dga's.\end{Cor}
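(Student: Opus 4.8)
The plan is to glue together the two zig-zags of Theorems~\ref{th:deRham1} and~\ref{th:deRham2} into a single zig-zag of quasi-isomorphisms and then appeal to the preceding theorem. First I would instantiate Theorem~\ref{th:deRham1} at the simplicial set $K=\Sing_\bullet(M)$, obtaining quasi-isomorphisms of dga's
\[
A_{PL}(\Sing_\bullet(M))\longrightarrow (C_{PL}\otimes A_{PL})(\Sing_\bullet(M))\longleftarrow C_{PL}(\Sing_\bullet(M)),
\]
and then observe that the rightmost term is by definition $S^*(M)$, since simplicial cochains of a space are exactly $C_{PL}(\Sing_\bullet(M))$. Splicing this chain onto the right-hand end of the zig-zag of Theorem~\ref{th:deRham2}, whose last term is $A_{PL}(\Sing_\bullet(M))$, produces a single zig-zag of quasi-isomorphisms of dga's joining $\Omega^*_{dR}(M)$ to $S^*(M)$.

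Next I would equip each term with its natural involution and check that the composite zig-zag lives in $\invdga$. Here $\Omega^*_{dR}(M)$ carries the identity involution of Example~\ref{ex:cdga}; each term of the form $A(K)$ appearing in the two theorems carries the involution supplied by Proposition~\ref{prop:inv_cochain_functor}, where the $\invsdga$-structure on $A_{PL \bullet}$ is that of Example~\ref{ex:invsdga_pl}, the one on $C_{PL \bullet}$ is the $\invsdga$-structure established above for simplicial cochains, the one on $C_{PL \bullet}\otimes A_{PL \bullet}$ comes from Example~\ref{ex:invsdga_tensor}, and the $\rSet$-structure on $\Sing_\bullet(M)$ is the reflexive structure recalled in Section~\ref{sec:dihedral_objects}. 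By the preceding theorem, $\alpha_M$, $\beta_M$, $\gamma_M$ and all the structural maps $A_{PL}\to C_{PL}\otimes A_{PL}\leftarrow C_{PL}$ are morphisms of involutive dga's, so the entire assembled chain is a zig-zag of quasi-isomorphisms inside $\invdga$.

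It then only remains to match the involutions with those named in the statement. On one end, $\Omega^*_{dR}(M)$ indeed carries the trivial involution by convention. On the other end, the involution that Proposition~\ref{prop:inv_cochain_functor} puts on $C_{PL}(\Sing_\bullet(M))$ is exactly the one described in Proposition~\ref{prop:sing_inv}, since the latter is precisely this special case (with $K=\Sing_\bullet(X)$ and $A=C_{PL \bullet}$). Hence the assembled zig-zag exhibits $\Omega^*_{dR}(M)$ and $S^*(M)$ as quasi-isomorphic in the category of involutive dga's, which is the claim. I expect no genuine obstacle here: all the analytic content of comparing de Rham with piecewise-linear forms has been absorbed into Theorems~\ref{th:deRham1} and~\ref{th:deRham2}, and all the compatibility of the structure maps with the involutions into the preceding theorem; what is left is the bookkeeping above, the only mildly delicate point being the identification of the two descriptions of the involution on $C_{PL}(\Sing_\bullet(M))$.
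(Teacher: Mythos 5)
Your proposal is correct and is exactly the paper's (implicit) argument: the corollary is stated as an immediate consequence of the preceding theorem, obtained by splicing the zig-zag of Theorem \ref{th:deRham1} at $K=\Sing_\bullet(M)$ onto that of Theorem \ref{th:deRham2}, noting $S^*(M)=C_{PL}(\Sing_\bullet M)$ with the involution identified in Proposition \ref{prop:sing_inv}. Nothing further is needed.
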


\begin{Cor}Let $M$ be a simply connected manifold of finite type over $\R$. Then using the trivial involution on $\Omega^*(M;\R)$ the following isomorphisms hold.
\begin{align*}H^*(\loops M;\R) &\cong HH_*(\Omega^*(M;\R))\\
H^*(\loops M_{h\TMon};\R) &\cong HC_*^-(\Omega^*(M;\R))\\
H^*(\loops M_{h\OMon};\R) &\cong HD_*^-(\Omega^*(M;\R))
\end{align*}
\end{Cor}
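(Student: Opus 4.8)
The plan is to reduce everything to the Main Theorem (and its circle-group and trivial-group analogues, Theorems~\ref{Th:Jones} and~\ref{Th:Goodwillie} of Jones and Goodwillie) by transporting the algebraic computations along the preceding corollary, which supplies a zig-zag of quasi-isomorphisms of involutive dga's between $\Omega^*_{dR}(M)$ and $S^*(M)$. The only genuinely new ingredient that is needed is that each of $HH_*$, $HC^-_*$ and $HD^-_*$ is invariant under quasi-isomorphisms of involutive dga's.

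To establish this invariance, let $f\colon A\to A'$ be a quasi-isomorphism of involutive dga's over $\R$. In each simplicial degree the induced map $(B^{\cyc}A)[n]=A^{\otimes n+1}\to A'^{\otimes n+1}=(B^{\cyc}A')[n]$ is a quasi-isomorphism by the Künneth theorem, which applies since $\R$ is a field. Because all of the dihedral structure maps of the cyclic bar construction are assembled from the multiplication, the unit, the cyclic permutations and the involution (Example~\ref{ex:cyclic_bar}), the map $B^{\cyc}f$ is a map of dihedral chain complexes; by the same double-complex argument used in the proof of the Main Theorem, together with Example~\ref{ex:total_dihedral}, the induced map on normalized total complexes $C_*(A)\to C_*(A')$ is then a quasi-isomorphism of differential graded $\OMon$-modules. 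Proposition~\ref{prop:orbit_equivalence}, together with its analogues for $\TMon$ and for the trivial group, now produces quasi-isomorphisms $DC^-_*(A)\to DC^-_*(A')$, $CC^-_*(A)\to CC^-_*(A')$ and $C_*(A)\to C_*(A')$, hence isomorphisms $HD^-_*(A)\cong HD^-_*(A')$, $HC^-_*(A)\cong HC^-_*(A')$ and $HH_*(A)\cong HH_*(A')$.

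Applying this to each leg of the zig-zag of the preceding corollary (whose legs are maps of involutive dga's) yields isomorphisms $HD^-_*(S^*(M;\R))\cong HD^-_*(\Omega^*_{dR}(M;\R))$, and likewise for $HC^-_*$ and $HH_*$, where $\Omega^*_{dR}(M;\R)$ carries the trivial involution (Example~\ref{ex:cdga}). Since $M$ is simply connected of finite type over $\R$, the Main Theorem, Theorem~\ref{Th:Jones} and Theorem~\ref{Th:Goodwillie} apply with $X=M$ and identify the left-hand sides with $H^*(\loops M_{h\OMon};\R)$, $H^*(\loops M_{h\TMon};\R)$ and $H^*(\loops M;\R)$ respectively; composing the isomorphisms then proves the corollary. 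I do not anticipate a serious obstacle: the one point requiring care is the passage from a quasi-isomorphism of involutive dga's to a quasi-isomorphism of the dihedral total complexes as $\OMon$-modules, and this is essentially a repetition of arguments already carried out for the Main Theorem, with the $\TMon$- and trivial-group cases being formally simpler.
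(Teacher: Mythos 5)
Your argument is correct and follows essentially the route the paper intends for this corollary: transport the Main Theorem and Theorem \ref{Th:Jones} along the zig-zag of involutive quasi-isomorphisms between $S^*(M;\R)$ and $\Omega^*_{dR}(M)$, the required invariance of $HH_*$, $HC^-_*$ and $HD^-_*$ under quasi-isomorphisms of (involutive) dga's being exactly the K\"unneth plus double-complex argument combined with Proposition \ref{prop:orbit_equivalence}, just as the paper itself does in the proof of the Main Theorem. The only slip is a citation: the identification $H^*(\loops M;\R)\cong HH_*(S^*(M;\R))$ is the first part of Theorem \ref{Th:Jones}, not Theorem \ref{Th:Goodwillie}, which concerns homology and chains on the Moore loop monoid rather than cochains.
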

Note that the first two isomorphisms already follow from Theorem A in \cite{Jones}, combined with the de Rham Theorem. The proofs do imply however, that these two isomorphisms are $C_2$-equivariant.
\begin{Prop}\label{prop:commutative_cochain_functor}Let $K_\bullet$ be a reflexive set (e.g., $\Sing_\bullet X$) and $A$ an involutive reflexive dga (e.g., $A_{PL}$). If $A$ is graded commutative in every simplicial degree and the involution $\inv{(-)}$ of Proposition \ref{prop:inv_cochain_functor} is chain homotopic to the identity, then $(A(K),\inv{(-)})\simeq (A(K), id)$ as involutive dga's.\end{Prop}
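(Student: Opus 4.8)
Writing $B=A(K)$, the plan is to produce a zig-zag
\[(B,\inv{(-)})\ \xleftarrow{\ \simeq\ }\ (B^{C_2},\id)\ \xrightarrow{\ \simeq\ }\ (B,\id)\]
of quasi-isomorphisms of involutive dga's whose apex is the strict $C_2$-fixed sub-dga of the involution, both legs being the inclusion $j\colon B^{C_2}\hookrightarrow B$.

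First I would note that, since $A$ is graded commutative in every simplicial degree, the pointwise product makes $B=A(K)$ graded commutative, so $\id$ is indeed an involution on $B$ (Example \ref{ex:cdga}); applying the same remark to $\inv{(-)}$ shows that the involution of Proposition \ref{prop:inv_cochain_functor} is in fact a cdga automorphism of $B$ with $\inv{(-)}\circ\inv{(-)}=\id$, that is, a $C_2$-action on the cdga $B$. The fixed subspace $B^{C_2}=\{\Phi\in B:\inv\Phi=\Phi\}$ is then a sub-cdga: it is a subcomplex because $\inv{(-)}$ is a chain map, a subalgebra because $\inv{(-)}$ is multiplicative, and $\inv{(-)}$ restricts to the identity on it. Hence the inclusion $j$ is at once a map of involutive dga's $(B^{C_2},\id)\to(B,\inv{(-)})$ — it intertwines the involutions precisely because elements of $B^{C_2}$ are $\inv{(-)}$-fixed — and a map of involutive dga's $(B^{C_2},\id)\to(B,\id)$.

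It then remains to see that $j$ is a quasi-isomorphism, which is the only place the hypothesis and the coefficients are used. As $2$ is invertible in $\ring$ (as it is in all cases where we apply the proposition), the operators $p_{\pm}=\tfrac12(\id\pm\inv{(-)})$ are complementary idempotent chain maps, so $B=B^{C_2}\oplus\operatorname{im}(p_-)$ with $B^{C_2}=\ker(p_-)$. A chain homotopy $s$ witnessing $\inv{(-)}\simeq\id$ gives, after dividing by $2$, a null-homotopy of $p_-$; a null-homotopic idempotent has acyclic image, so $\operatorname{im}(p_-)$ is acyclic and $j$ is a quasi-isomorphism. The displayed zig-zag then proves the claim.

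The argument is essentially formal, and the only subtlety worth flagging is that the passage to \emph{strict} $C_2$-fixed points works only because of graded commutativity (so that $\inv{(-)}$ is multiplicative and $B^{C_2}$ is a sub-cdga) and because $2$ is a unit (so that the homotopically trivial involution contributes an acyclic $(-1)$-eigensummand); removing the characteristic assumption would force a heavier, homotopy-coherent comparison instead.
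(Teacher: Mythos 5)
Your proof is correct and takes essentially the same route as the paper: the paper's apex $\Nat_{\reflexive}(K_\bullet,A)\subset A(K)$ is exactly your strict fixed subalgebra $B^{C_2}$, and both arguments hinge on $\tfrac12\in\ring$ together with half the given homotopy. The only difference is presentational: you split $B=B^{C_2}\oplus\operatorname{im}(p_-)$ and show the $(-1)$-eigensummand is acyclic, whereas the paper exhibits the retraction $\rho=\tfrac12(\id+\inv{(-)})$ and the homotopy $\iota\circ\rho\simeq\id$.
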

\begin{proof}As both $K_\bullet$ and $A$ are reflexive objects, we may form the end construction $\tilde{A}(K) \coloneqq \Nat_{\reflexive}(K_\bullet,A)\subset \Nat_{\ford}(K_\bullet,A)=A(K)$. On $\tilde{A}(K)$, the two involutions agree and the inclusion map $\iota\colon \tilde{A}(K)\hookrightarrow A(K)$ is a section of the chain map $\rho\colon \Psi \mapsto \frac{1}{2} (\Psi + \inv{\Psi})$. If $h$ is a chain homotopy between the involution and the identity, then $\frac{1}{2}h$ is a chain homotopy between $\iota\circ\rho$ and the identity. Hence $\iota$ is a quasi isomorphism and the result follows from the following zig-zag. 
\[(A(K),\inv{(-)})\hookleftarrow (\tilde{A}(K), id) \hookrightarrow (A(K), id)\]\end{proof}

\begin{Prop}\label{prop:sing_equivalence}Let $X$ be a topological space. Then \[(A_{PL}(\Sing_\bullet X),\inv{(-)})\simeq (A_{PL}(\Sing_\bullet X), id)\] as involutive dga's.\end{Prop}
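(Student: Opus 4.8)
The plan is to obtain this directly from Proposition \ref{prop:commutative_cochain_functor}, applied to the reflexive set $K_\bullet=\Sing_\bullet X$ and the involutive reflexive dga $A=A_{PL}$ of Example \ref{ex:invsdga_pl}. Since $A_{PL}$ is graded commutative in each simplicial degree, the only hypothesis of that proposition left to verify is that the involution $\inv{(-)}$ from Proposition \ref{prop:inv_cochain_functor} on $A_{PL}(\Sing_\bullet X)$ is chain homotopic to the identity.

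To check this I would first reduce it to a statement about singular cochains. Over a field of characteristic zero, a self-map of a chain complex is chain homotopic to the identity precisely when it induces the identity on (co)homology, and this criterion is invariant under a quasi-isomorphism of pairs (complex, self-map): if $f\colon C\to D$ is a quasi-isomorphism with $f\phi_C=\phi_D f$ and $\phi_D\simeq\id_D$, then $\phi_C$ acts as the identity on $H_*(C)$ and hence $\phi_C\simeq\id_C$. The involutive de Rham zig-zag of Theorem \ref{th:deRham1}, whose maps were shown above to preserve the involutions, connects $(A_{PL}(\Sing_\bullet X),\inv{(-)})$ through such quasi-isomorphisms to $(C_{PL}(\Sing_\bullet X),\inv{(-)})=(S^*(X),\inv{(-)})$. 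So it suffices to show that the involution $\inv{\gamma}(\sigma)=(-1)^{p(p+1)/2}\gamma(\inv{\sigma})$ of Proposition \ref{prop:sing_inv} acts as the identity on $H^*(X;\ring)$.

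Dually, this is the classical statement that the natural chain map $T\colon S_*(X;\ring)\to S_*(X;\ring)$, $T_n(\sigma)=(-1)^{n(n+1)/2}\inv{\sigma}$, is the identity on homology. A short sign computation, using $d_i\inv{\sigma}=\inv{d_{n-i}\sigma}$, shows that the prefactor $(-1)^{n(n+1)/2}$ is exactly what makes $T$ commute with the boundary; then, since $S_*(-;\ring)\colon\Top\to\Ch$ is free and acyclic on the models $\{\ford^n\}_{n\ge 0}$ and $T$ agrees with the identity on the generator of $S_0(\ford^0)$, the method of acyclic models produces a natural chain homotopy from $T$ to the identity, so in particular $T_*=\id$ on homology. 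I expect the only mildly delicate point to be bookkeeping: tracking the signs through Proposition \ref{prop:sing_inv} to confirm that the linear dual of $T$ is indeed the involution on $S^*(X)$, and invoking that the base ring is a field in order to pass from ``homotopic to the identity'' to the cohomological statement. (One could equally well bypass $S^*(X)$: $H^*(A_{PL}(\Sing_\bullet X))\cong H^*(X;\ring)$ by the de Rham theorem, the involution acts on it through the flip of simplices, and the acyclic-models argument shows this is the identity.)
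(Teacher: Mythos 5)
Your proposal is correct and follows essentially the same route as the paper's own proof: both verify the remaining hypothesis of Proposition \ref{prop:commutative_cochain_functor} by transporting the involution along the involution-preserving quasi-isomorphisms of Theorem \ref{th:deRham1} to singular cochains $S^*(X)$, where it acts as the identity on cohomology, and then use the field hypothesis to upgrade ``identity on homology'' to a chain homotopy. The only difference is that you spell out, via acyclic models, the classical fact that the signed flip map on singular chains is naturally chain homotopic to the identity, which the paper simply takes as known.
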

\begin{proof}We need to show that the condition of Proposition \ref{prop:commutative_cochain_functor} holds for $K_\bullet = \Sing_\bullet X$ and $A=A_{PL}$. This can be seen by considering the following diagram, which commutes by Theorem \ref{th:deRham1}.

\[\xymatrix{
A_{PL}(\Sing_\bullet X) \ar[d]^{\inv{(-)}} \ar[r] & (C_{PL}\otimes A_{PL}(\Sing_\bullet X)) \ar[d]^{\inv{(-)}} & S^*(X) \ar[l] \ar[d]^{\inv{(-)}} \\
A_{PL}(\Sing_\bullet X) \ar[r] & (C_{PL}\otimes A_{PL}(\Sing_\bullet X)) & S^*(X) \ar[l] 
}\]

After taking homology, the vertical morphism on the right hand side is the identity and arrows are isomorphisms. This implies that on homology, the involution $\inv{(-)}$ on $A_{PL}(K)$ is the identity. Over a field, two chain maps are the same on homology if and only if they are chain homotopic and thus it follows that $\inv{(-)}$ is homotopic to the identity.
\end{proof}

The fact above allows one to take any cdga model $A$ for a space $X$ from rational homotopy theory and use the equivalence $(A,id)\simeq (S^*(X;\Q),\inv{(-)})$ to compute $H^*(\loops X_{h\OMon})$ with. In particular we have the following statement.

\begin{Cor}\label{cor:rational_formality}If $X$ is a rationally formal simply connected space of finite type, then $S^*(X)$ is formal as an involutive dga and thus
\[H^*(\loops X_{h\OMon};\Q) \cong HD_*^-(H^*(X;\Q)).\]
\end{Cor}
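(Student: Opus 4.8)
The plan is to produce a zig-zag of quasi-isomorphisms of \emph{involutive} dga's from $(S^*(X;\Q),\inv{(-)})$ to $(H^*(X;\Q),\id)$, to note that negative dihedral homology carries such a zig-zag to an isomorphism, and then to invoke the Main Theorem.

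First I would assemble the involutive formality zig-zag. Applying the theorem that the maps in the zig-zag of Theorem \ref{th:deRham1} are maps of involutive dga's to the reflexive set $\Sing_\bullet X$ gives an equivalence of involutive dga's $(S^*(X;\Q),\inv{(-)}) \simeq (A_{PL}(\Sing_\bullet X),\inv{(-)})$, and Proposition \ref{prop:sing_equivalence} gives $(A_{PL}(\Sing_\bullet X),\inv{(-)})\simeq (A_{PL}(\Sing_\bullet X),\id)$ as involutive dga's. Since $X$ is rationally formal, $A_{PL}(\Sing_\bullet X)$ is joined to $(H^*(X;\Q),0)$ by a zig-zag of quasi-isomorphisms of cdga's; as every cdga is an involutive dga for the identity involution (Example \ref{ex:cdga}) and every cdga morphism trivially respects it, this is automatically a zig-zag of involutive quasi-isomorphisms ending at $(H^*(X;\Q),\id)$. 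Splicing the pieces,
\[(S^*(X;\Q),\inv{(-)})\simeq (A_{PL}(\Sing_\bullet X),\inv{(-)})\simeq (A_{PL}(\Sing_\bullet X),\id)\simeq\cdots\simeq(H^*(X;\Q),\id),\]
which is precisely the statement that $S^*(X;\Q)$ is formal as an involutive dga. Note that the involution on the right-hand term is the identity, consistently with the fact — from the proof of Proposition \ref{prop:sing_equivalence} — that $\inv{(-)}$ is chain homotopic to $\id$ on $S^*(X;\Q)$ and hence acts as the identity on $H^*(X;\Q)$, which is the involution implicit in the symbol $HD^-_*(H^*(X;\Q))$.

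Next I would check that $HD^-_*$ is invariant under quasi-isomorphisms of involutive dga's. Given such a quasi-isomorphism $f\colon A\to B$ over a field, $B^{\cyc}f$ is levelwise $f^{\otimes n+1}$, a quasi-isomorphism by Künneth, and it commutes with every dihedral structure map — in particular with the reflection $r_n$ of Example \ref{ex:cyclic_bar} — because $f$ preserves products, units and involutions; so $B^{\cyc}f$ is a quasi-isomorphism of dihedral chain complexes. Passing to total complexes via the double-complex argument used in the proof of the Main Theorem gives a quasi-isomorphism of $\OMon$-modules $C_*(A)\to C_*(B)$, and then Proposition \ref{prop:orbit_equivalence} yields $HD^-_*(A)\cong HD^-_*(B)$. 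Running this along the zig-zag above gives $HD^-_*(S^*(X;\Q))\cong HD^-_*(H^*(X;\Q))$, and combining with the Main Theorem,
\[H^*(\loops X_{h\OMon};\Q)\cong HD^-_*(S^*(X;\Q))\cong HD^-_*(H^*(X;\Q)).\]

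I do not expect a serious obstacle: the substantive work — upgrading the polynomial-forms and de Rham comparisons to the involutive world and proving Proposition \ref{prop:sing_equivalence} — is already in hand, and what remains is the bookkeeping above plus the routine observation that a graded-commutative model is automatically involutive with the identity involution. The one point that genuinely deserves a line of care is identifying the involution induced on $H^*(X;\Q)$ with the identity, which is handled by the homotopy between $\inv{(-)}$ and $\id$ on $S^*(X;\Q)$.
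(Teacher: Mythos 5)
Your proposal is correct and follows essentially the same route the paper intends: involutive formality of $S^*(X;\Q)$ assembled from the involutive zig-zag of Theorem \ref{th:deRham1}, Proposition \ref{prop:sing_equivalence}, and the rational formality zig-zag with identity involutions, followed by invariance of $HD^-_*$ under involutive quasi-isomorphisms (via Proposition \ref{prop:orbit_equivalence}) and the Main Theorem. The only difference is that you spell out the quasi-isomorphism invariance of $HD^-_*$ explicitly, which the paper leaves implicit.
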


\section{An example calculation}\label{sec:example_calc}
In this section we show that the results from the last section allow one to do concrete calculations. To demonstrate this we calculate Borel equivariant cohomology of $\loops S^2$ over the rationals. As $S^2$ is rationally formal, it is involutively formal over the rationals by Corollary \ref{cor:rational_formality} and to calculate negative dihedral homology we may use the algebra $H^*(S^2;\Q)=\Q[\alpha]/\alpha^2$ where $\degree{\alpha}=2$. 

The normalized Hochschild complex is generated by classes $\alpha_n=1\otimes \alpha^{\otimes n}$ and $\beta_n=\alpha \otimes \alpha^{\otimes n}$ for all $n\geq 0$, which have total degrees $-n$ and $-(n+1)$ respectively. As the internal differential is $0$, the total differential is given by $(-1)^{\textit{int}}\Sigma (-1)^id_i$. By using that $\alpha^2=0$, it is easy to calculate the differential. The only classes on which the differential is not zero are $d\alpha_n=2\beta_{n-1}$ for even $n$.

From this one can see that the Hochschild homology and therefore also $H^*(\loops S^2;\Q)$ is generated as a graded vector space by the classes
\[\alpha_0, \alpha_1, \alpha_3, \alpha_5, \ldots \quad \textit{and} \quad \beta_0, \beta_2, \beta_4,\beta_6, \ldots\] 
That is, there is exactly one class in every negative degree.

By using that the involution is the identity, it can be seen that the $R$ operation is $R=(-1)^{n(n+1)/2}id$ in simplicial degree $n$. A quick calculation shows that the only classes on which $B$ acts non-trivially are the classes $B\beta_n=(n+1)\alpha_{n+1}$ for $n$ even.

In order to now calculate the homotopy orbits for the $C_2$-action we can use the following well known fact.
\begin{Prop}\label{prop:2invertible}Let $\ring$ be any ring with $\frac{1}{2}\in\ring$. And let $W$ be a $C_2$-space. Then $H^*(W_{hC_2};\ring)\cong H^*(W;\ring)^{C_2}$.\end{Prop}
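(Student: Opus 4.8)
The statement is classical, and the plan is to deduce it from the Cartan--Leray spectral sequence together with the vanishing of higher group cohomology of $C_2$ when $2$ is invertible. First I would recall that $W_{hC_2}=EC_2\times_{C_2}W$ sits in the Borel fibration
\[ W \longrightarrow W_{hC_2} \longrightarrow BC_2 \]
coming from the projection $EC_2\times_{C_2}W\to EC_2/C_2=BC_2$. Since $BC_2$ is connected, the cohomological Serre spectral sequence of this fibration is the first-quadrant spectral sequence
\[ E_2^{p,q}=H^p(C_2;H^q(W;\ring)) \Longrightarrow H^{p+q}(W_{hC_2};\ring), \]
in which $\pi_1(BC_2)=C_2$ acts on the coefficient module $H^q(W;\ring)$ by the action induced from the given $C_2$-action on $W$; this is exactly the Cartan--Leray spectral sequence of the free $C_2$-space $EC_2\times W$.

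The essential input is that $H^p(C_2;M)=0$ for every $p>0$ and every $\ring[C_2]$-module $M$. This follows from the restriction--corestriction (transfer) identity: multiplication by $|C_2|=2$ on $H^p(C_2;M)$ factors through $H^p$ of the trivial subgroup and is therefore zero for $p>0$; as $2$ is a unit in $\ring$ and $H^p(C_2;M)$ is an $\ring$-module, it must vanish. Hence the $E_2$-page is concentrated on the line $p=0$, where $E_2^{0,q}=H^0(C_2;H^q(W;\ring))=H^q(W;\ring)^{C_2}$. A spectral sequence concentrated in a single column collapses at $E_2$ with no extension problems, so I obtain $H^n(W_{hC_2};\ring)\cong H^n(W;\ring)^{C_2}$, natural in $W$ and realized concretely by restriction along the inclusion of a fibre $W\hookrightarrow W_{hC_2}$.

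There is no real obstacle here; the only points deserving a line of care are the identification of the local coefficient system on $BC_2$ with the module $H^*(W;\ring)$ and the vanishing of $H^{>0}(C_2;-)$ over a ring in which $2$ is invertible. As an alternative avoiding spectral sequences, one can argue directly: $q\colon EC_2\times W\to W_{hC_2}$ is a regular double cover, so the transfer yields a retraction $\tfrac12\operatorname{tr}$ of $q^*$ and an averaging argument identifies $q^*\colon H^*(W_{hC_2};\ring)\to H^*(EC_2\times W;\ring)$ with the inclusion of the $C_2$-invariants; since the $C_2$-equivariant projection $EC_2\times W\to W$ induces a $C_2$-equivariant isomorphism on cohomology, the claim follows.
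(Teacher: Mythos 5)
Your argument is correct and follows essentially the same route as the paper: both use the Serre spectral sequence of the Borel fibration $W\to W_{hC_2}\to BC_2$ together with the vanishing of $H^{p>0}(C_2;M)$ when $2$ is invertible (you via the transfer, the paper via exactness of $C_2$-invariants, which is the same averaging idea), forcing collapse onto the $p=0$ column. No gaps; the added sketch via the double cover is a nice but optional alternative.
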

\begin{proof}When 2 is invertible, any element $m$ of a $\ring C_2$-module $M$ can be projected to the invariant element $\frac{1}{2}(m+gm)$, where $g$ is the generator of $C_2$. Using this, one can check that the functor of $C_2$-invariants is exact. This implies that the group cohomology over $\ring$ is $H^*(C_2;M)=0$ for $*>0$ and $H^0(C_2;M)=M^{C_2}$.
The map $W\twoheadrightarrow pt$ induces a fibration
\[EC_2\times_{C_2}W\twoheadrightarrow EC_2\times_{C_2}pt\simeq BC_2.\]
The associated Leray--Serre spectral sequence is \[E_2^{p,q}=H^p(BC_2;H^q(W))\convergesto H^{p+q}(W_{hC_2})\] Reinterpreting the twisted coefficients on the $E_2$ page as group cohomology, we see that the spectral sequence collapses here and read off the conclusion
\[H^q(W_{hC_2})\cong H^0(C_2;H^q(W))=H^q(W)^{C_2}.\]
\end{proof}

\begin{Cor}There are isomorphisms
\[H^*((\loops S^2)_{hC_2};\Q)\cong H^*(\loops S^2;\Q)^{C_2}\cong HH_{-*}(\Q[\alpha]/\alpha^2)^{C_2}.\]
As a graded module, this is generated by the classes $\alpha_0,\alpha_3,\alpha_7,\alpha_{11},\ldots$ and $\beta_0, \beta_4, \beta_8,\ldots$. In particular, this is four periodic.\end{Cor}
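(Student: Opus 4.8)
The first isomorphism is immediate from Proposition \ref{prop:2invertible}: the loop-reversal involution makes $\loops S^2$ a $C_2$-space, and since $\tfrac12\in\Q$ we obtain $H^*((\loops S^2)_{hC_2};\Q)\cong H^*(\loops S^2;\Q)^{C_2}$.

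For the second isomorphism I would use the $C_2$-equivariant form of Jones' theorem. The proof of the Main Theorem already produces an equivalence of $\OMon$-modules $C_*(S^*(X))\simeq_{\OMon}S^*(\loops X)$; restricting the module structure along $C_2\hookrightarrow\OMon$ and passing to homology gives $H^*(\loops S^2;\Q)\cong HH_{-*}(S^*(S^2;\Q))$ as $C_2$-modules, where the $C_2$-action on the right is generated by the operator $R$ of Example \ref{ex:total_dihedral} coming from the involution of Proposition \ref{prop:sing_inv}. (Equivalently one may quote the reflexive Jones isomorphism $HR^-_*(S^*(X))\cong H^*(\loops X_{hC_2})$ noted after the Main Theorem together with Proposition \ref{prop:2invertible} on the algebraic side.) Because $S^2$ is formal, Corollary \ref{cor:rational_formality} supplies a zig-zag of quasi-isomorphisms of involutive dga's $S^*(S^2;\Q)\simeq H^*(S^2;\Q)=\Q[\alpha]/\alpha^2$, hence a zig-zag of quasi-isomorphisms of dihedral, and in particular reflexive, chain complexes $C_*(S^*(S^2))\simeq C_*(\Q[\alpha]/\alpha^2)$, so that $HH_{-*}(S^*(S^2;\Q))\cong HH_{-*}(\Q[\alpha]/\alpha^2)$ as $C_2$-modules. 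Finally, with $\tfrac12\in\Q$ the functor $(-)^{C_2}$ is exact, exactly as in the proof of Proposition \ref{prop:2invertible}, so it commutes with passing to homology; taking $C_2$-invariants throughout yields the second isomorphism.

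It then remains to identify $HH_{-*}(\Q[\alpha]/\alpha^2)^{C_2}$ from the computation already carried out in this section. There the Hochschild homology is shown to be one-dimensional in each total degree, spanned by a class $\alpha_n$ (with $n=0$ or $n$ odd) or $\beta_n$ (with $n$ even), and $R$ is shown to act in simplicial degree $n$ as $(-1)^{n(n+1)/2}\id$. Since the involution on $\Q[\alpha]/\alpha^2$ is trivial and $\alpha$ is even, the reflection $r_n$ fixes each surviving $\alpha_n$ and $\beta_n$, so $R$ multiplies these by $(-1)^{n(n+1)/2}$. As $(-1)^{n(n+1)/2}=+1$ precisely when $n\equiv 0,3\pmod 4$, intersecting this condition with the parity constraints on the surviving classes singles out exactly $\alpha_0,\alpha_3,\alpha_7,\alpha_{11},\dots$ and $\beta_0,\beta_4,\beta_8,\dots$ as a basis of the invariant subspace. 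The subscripts of these generators recur with period four, and the total degree of a generator is a fixed affine function of its subscript, so the degrees of the basis elements also recur with period four, which is the asserted four-periodicity.

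Essentially everything here is bookkeeping, so there is no genuine obstacle; the two points worth care are the matching of the two $C_2$-actions — that loop reversal on $H^*(\loops S^2;\Q)$ corresponds under Jones' isomorphism to the operator $R$ on Hochschild homology, which is precisely the $C_2$-equivariance furnished by the Main Theorem — and the combinatorics of the sign $(-1)^{n(n+1)/2}$ against the parities of the surviving classes, which is what turns the $C_2$-invariants into a four-periodic sequence.
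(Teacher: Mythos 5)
Your proposal is correct and follows essentially the same route as the paper: the first isomorphism is Proposition \ref{prop:2invertible} applied to the loop-reversal action, the second is the $C_2$-equivariant content of the Main Theorem (equivalently the reflexive Jones isomorphism) combined with involutive formality of $S^2$ via Corollary \ref{cor:rational_formality}, and the basis is read off from the computed Hochschild classes and the action $R=(-1)^{n(n+1)/2}\id$. Your extra remarks on exactness of $(-)^{C_2}$ and on matching the two $C_2$-actions are exactly the implicit steps the paper relies on.
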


In order to now calculate the negative cyclic and negative dihedral homology, we consider the negative cyclic chains as the totalization of a double complex. In general we ought to use the product totalization, but in our case this coincides with the sum totalization because of the coconnectivity of the normalized Hochschild complex. The double complex gives us to converging spectral sequences. In particular, we consider the spectral sequence
\[(E^1, d^1) = (HH_*( \Q [\alpha] / {\alpha}^2)[u], uB) \convergesto HC^-_*(\Q[\alpha]/{\alpha}^2).\]
On the $E^1$ page, there is exactly one generator in each bidegree above or on the diagonal in the third quadrant. By considering the fact that $B$ maps every surviving $\beta_n$ class to a multiple of $\alpha_{n+1}$, we see that none of the classes on the interior survive to $E^2$. On $E^2$ we are left with the classes $u^p\alpha_0$ and $\alpha_q$ for odd $q$. Because of their degrees it is possible that $d^p$ maps $\alpha_{2p-1}$ to a multiple of $u^p\alpha_0$. But, because nothing can kill the $\alpha_{2p-1}$ in the double complex, we see that in fact all the differentials must be zero and hence $E^2=E^\infty$ and we can read off the cyclic homology. And to compute the negative dihedral homology, we can again apply Proposition \ref{prop:2invertible}. Note that the generator of $C_2$ acts as $u^p\alpha_0\mapsto (-1)^pR(\alpha_0)=(-1)^p\alpha_0$.

\begin{Th}\label{Th:rational_sphere}As a graded vector space, $H^*((\loops S^2)_{h\TMon};\Q)$ is generated by the classes $\alpha_0, \alpha_1, \alpha_3, \alpha_5,\ldots$ and $u\alpha_0,u^2\alpha_0,u^3\alpha_0,\ldots$. In other words, it is one dimensional in every degree. The cohomology $H^*((\loops S^2)_{h\OMon};\Q)$ is generated by the classes $\alpha_0,\alpha_3,\alpha_7,\alpha_{11},\alpha_{15},\ldots$ and $u^2\alpha_0, u^4\alpha_0, u^6\alpha_0,\ldots$ as a graded vector space.
\end{Th}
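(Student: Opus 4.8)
The plan is to reduce to the algebra $A = H^*(S^2;\Q) = \Q[\alpha]/\alpha^2$ with $\degree{\alpha}=2$ and then read off the answer from the negative cyclic spectral sequence set up just before the statement. By Corollary \ref{cor:rational_formality} we already have $H^*(\loops S^2_{h\OMon};\Q) \cong HD^-_*(A)$, and the same argument with Jones' Theorem \ref{Th:Jones} and Proposition \ref{prop:orbit_equivalence} in place of the Main Theorem gives the negative cyclic analogue $H^*(\loops S^2_{h\TMon};\Q) \cong HC^-_*(A)$. Here $A$ carries the identity involution, so the operator $R$ on the Hochschild complex $C_*(A)$ is $(-1)^{n(n+1)/2}\id$ in simplicial degree $n$, as recorded just above the theorem.

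For the circle statement I would run the $u$-adic filtration on $CC^-_*(A) = (C_*(A)[u],\, d+uB)$: its $E^1$-page is $HH_*(A)[u]$ with $d^1 = uB$, and since $B$ is nonzero only on the classes $\beta_n$ ($n$ even), with $B\beta_n = (n+1)\alpha_{n+1}$, the $E^2$-page has basis $\{u^p\alpha_0 : p\geq 0\}\cup\{\alpha_q : q \text{ odd}\}$. A count of filtration and total degree shows the only possibly nonzero higher differential is $d^p\alpha_{2p-1} = c_p\, u^p\alpha_0$. To see that each $c_p$ vanishes I would observe that the unit $\Q\hookrightarrow A$ and the augmentation $A\twoheadrightarrow\Q$ compose to the identity and hence exhibit $HC^-_*(\Q)=\Q[u]$ (standard, see \cite{LodayCyclic}) as a retract of $HC^-_*(A)$; thus the image $u^p\alpha_0$ of the generator $u^p\in HC^-_{-2p}(\Q)$ is nonzero in $HC^-_{-2p}(A)$, so $u^p\alpha_0$ survives to $E^\infty$ and cannot be hit. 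Hence $E^2 = E^\infty$, and reading off the surviving basis gives the asserted generators and one-dimensionality in every degree.

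For the dihedral statement, Proposition \ref{prop:alg_orbits} gives $DC^-_*(A) = (CC^-_*(A))^{hC_2}$, and since $\tfrac12\in\Q$ the homotopy-fixed-point spectral sequence collapses — the argument of Proposition \ref{prop:2invertible} shows $H^{>0}(C_2;M)=0$ for any $\Q[C_2]$-module $M$ — so $HD^-_*(A) \cong (HC^-_*(A))^{C_2}$. It then remains to identify the $C_2$-action on the basis found above: the generator sends $u^p\alpha_0$ to $(-1)^p R(\alpha_0) = (-1)^p\alpha_0$, which is invariant exactly when $p$ is even, and it sends the class $\alpha_q$, which lies in simplicial degree $q$, to $R(\alpha_q) = (-1)^{q(q+1)/2}\alpha_q$, which is invariant exactly when $q\equiv 3\pmod 4$. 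Passing to invariants leaves $\alpha_0, u^2\alpha_0, u^4\alpha_0,\ldots$ and $\alpha_3,\alpha_7,\alpha_{11},\ldots$, as claimed.

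The one step that requires real care is the vanishing of the higher differentials $d^p$; the retract argument above is the cleanest route, though one can instead compute $HP_*(A) = HC^-_*(A)[u^{-1}] = \Q[u,u^{-1}]\langle\alpha_0\rangle$ directly — its spectral sequence degenerates at $E^2$ for degree reasons — and use the localization map $HC^-_*(A)\to HP_*(A)$ to see that no class $u^p\alpha_0$ bounds. The remainder is bookkeeping of signs and degrees.
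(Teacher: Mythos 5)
Your proposal is correct and follows the paper's computation almost verbatim: reduction to $\Q[\alpha]/\alpha^2$ via formality, the explicit normalized Hochschild complex with $d\alpha_n=2\beta_{n-1}$ ($n$ even) and $B\beta_n=(n+1)\alpha_{n+1}$ ($n$ even), the $u$-filtration spectral sequence with $E^2$ spanned by $u^p\alpha_0$ and $\alpha_q$ ($q$ odd), and finally $C_2$-invariants using that $2$ is invertible, with the same signs $R\alpha_q=(-1)^{q(q+1)/2}\alpha_q$ and $u^p\alpha_0\mapsto(-1)^p\alpha_0$. The one place where you genuinely diverge is the degeneration $E^2=E^\infty$: the paper argues at the source, observing that the classes $\alpha_{2p-1}$ are honest cycles for $b+uB$ (both $b$ and $B$ vanish on them) which nothing in the double complex can bound, so they support no higher differentials; you argue at the target, using the retraction $\Q\to \Q[\alpha]/\alpha^2\to\Q$ to split $HC^-_*(\Q)=\Q[u]$ off of $HC^-_*(A)$, so the classes $u^p\alpha_0$ are nonzero in homology and cannot be hit. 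Your argument is valid, with the small implicit step worth making explicit: nonvanishing of the homology class only forces survival of the $E^2$-class because $u^p\alpha_0$ is the \emph{unique} $E^2$-class in total degree $-2p$ (and the filtration is finite in each degree by coconnectivity), so if it were hit the whole group would vanish, contradicting the splitting. The paper's source-side argument avoids even that bookkeeping, while your retract (or the periodic-homology localization you sketch) has the advantage of being a structural argument that generalizes readily to other unital algebras with augmentation.
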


\begin{Rm}Although $S^*(S^2;\F_2)$ is formal as a dga over $\F_2$, it is not clear to the author whether $S^2$ is involutively formal over $\ring=\F_2$. Assuming it is, we can again use negative dihedral homology of $\F_2[\alpha]/\alpha^2$ to compute $H^*(\loops S^2 _{h\OMon};\F_2)$. The complex that computes the negative dihedral homology is generated by $v^p u^q \alpha_n$ and $v^p u^q \beta_n$ for all $n,p,q\geq 0$ and the only non-trivial differential is $v^p u^q \beta_n \mapsto v^p u^{q+1} \alpha_{n+1}$. This means that the cohomology is generated by the classes $v^p u^q \beta_n$ for odd $n$, $v^p u^q \alpha_n$ for even $n$ and $u^q \alpha_n$ for all $n$. According to a computer calculation for low degrees and \cite{OEISSequence}, this results in Betti numbers that are $\lfloor (*+2)^2/4 \rfloor$, which is a monotonic sequence.\end{Rm}
{
\printbibliography
}
\end{document}